\newtheorem{theorem}{Theorem}[subsection]
\newtheorem{proposition}[theorem]{Proposition}
\newtheorem{corollary}[theorem]{Corollary}
\newtheorem{lemma}[theorem]{Lemma}
\theoremstyle{definition}
\newtheorem{definition}[theorem]{Definition}
\newcommand{\To}{\longrightarrow}
\newcommand{\into}{\hookrightarrow}
\newcommand{\isoto}{\stackrel{\sim}{\To}}
\newcommand{\G}{\mathbb{G}}
\newcommand{\rbar}{\bar{r}}
\newcommand{\C}{\mathbb{C}}
\newcommand{\A}{\mathbb{A}}
\newcommand{\Pj}{\mathbb{P}}
\newcommand{\Z}{\mathbb{Z}}
\newcommand{\bbX}{\mathbb{X}}
\newcommand{\Q}{\mathbb{Q}}
\newcommand{\Qbar}{\overline{\mathbb{Q}}}
\newcommand{\cH}{\mathcal{H}}
\newcommand{\Hcan}{\mathcal{H}^{\mathrm{can}}}
\newcommand{\lambdacan}{\lambda^{\mathrm{can}}}
\newcommand{\cL}{\mathcal{L}}
\newcommand{\cA}{\mathcal{A}}
\newcommand{\cB}{\mathcal{B}}
\newcommand{\cG}{\mathcal{G}}
\newcommand{\cF}{\mathcal{F}}
\newcommand{\cP}{\mathcal{P}}
\newcommand{\cW}{\mathcal{W}}
\newcommand{\fF}{\mathfrak{F}}
\newcommand{\fl}{\mathfrak{l}}
\newcommand{\fq}{\mathfrak{q}}
\newcommand{\fQ}{\mathfrak{Q}}
\newcommand{\fw}{\mathfrak{w}}
\newcommand{\Gal}{\mathrm{Gal}\,}
\newcommand{\Frob}{\mathrm{Frob}}
\newcommand{\BDR}{B_{\mathrm{DR}}}
\newcommand{\Fil}{\mathrm{Fil}}
\newcommand{\gr}{\mathrm{gr}}
\newcommand{\Hom}{\mathrm{Hom}\,}
\newcommand{\sgn}{\mathrm{sgn}\,}
\newcommand{\Spec}{\mathrm{Spec}\,}
\newcommand{\Br}{\mathrm{Br}\,}
\newcommand{\Frac}{\mathrm{Frac}\,}
\newcommand{\Prim}{\mathit{Prim}}
\newcommand{\List}{\mathit{List}}
\newcommand{\Tr}{\mathrm{Tr}}
\newcommand{\ad}{\mathrm{ad}}
\newcommand{\GSp}{\mathrm{GSp}}
\newcommand{\Sp}{\mathrm{Sp}}
\newcommand{\GL}{\mathrm{GL}}
\newcommand{\PGL}{\mathrm{PGL}}
\newcommand{\SL}{\mathrm{SL}}
\newcommand{\ssm}{{^{\mathrm{ss}}}}
\newcommand{\Fbar}{\overline{F}}
\newcommand{\Kbar}{\overline{K}}
\newcommand{\F}{\mathbb{F}}
\newcommand{\eps}{\epsilon}
\newcommand{\bfk}{\mathbf{k}}
\newcommand{\notdiv}{\!\!\not|\,}
\newcommand{\oldbegstrat}{
\par\vspace{-0.3\baselineskip}\hfill\begin{boxedminipage}[t]{4.7in}
\hspace{-0.3in}\begin{minipage}{4.9in}
\begin{description}
}
\newcommand{\ra}{\rightarrow}
\newcommand{\oldstratdone}{
\end{description}
\end{minipage}
\end{boxedminipage}\hfill\vspace{0.3\baselineskip}\par
}
\newcommand{\begstrat}{
\begin{figure}[ht]
\hfill\begin{boxedminipage}[t]{4.7in}
\hspace{-0.15in}\begin{minipage}{4.7in}
\begin{itemize}
}
\newcommand{\stratdone}{
\end{itemize}
\end{minipage}
\end{boxedminipage}\hfill
}
\newcommand{\stratalldone}{
\end{figure}
}
\title{Potential automorphy for certain Galois representations to $\GL_{2n}$}
\author{Thomas Barnet-Lamb}
\email{tbl@math.harvard.edu} 
\address{Department of Mathematics\\Harvard University\\Cambridge\\MA 02138\\USA} 
\begin{document}
\subjclass{11R39 (primary), 11F23 (secondary)}
\keywords{Galois representation, potential automorphy, potential modularity, Dwork hypersurface}
\thanks{The author was partially supported by NSF grant DMS-0600716 and by a Jean E.~de Valpine Fellowship.} 

\begin{abstract}
Building upon work of Clozel, Harris, Shepherd-Barron, and Taylor, this paper shows that certain Galois representations become automorphic after one makes a suitably large totally-real extension of the base field. The main innovation here is that the result applies to Galois representations to $\GL_{2n}$, where previous work dealt with representations to $\GSp_n$. The main technique is the consideration of the cohomology the Dwork hypersurface, and in particular, of pieces of this cohomology other than the invariants under the natural group action.     
\end{abstract}
\maketitle

\section{Introduction}

\subsection{} The aim of this document is to prove a potential automorphy theorem: that is, a statement that certain Galois representations become automorphic when we make a large field extension. I will begin by first introducing just enough definitions to state the theorem which I will prove, and proceed to state it.

The first notion we will need to define is the notion of the \emph{sign} of a polarizable Galois representation, after Bella\"iche-Chenevier (see \cite[\S1.1]{bc}). For $l$ a rational prime, we will write $\eps_l$ to denote the $l$-adic cyclotomic character.
\begin{definition} \label{def:bc-sign}
 Let $F$ be a CM\footnote{For us, `CM field' will always mean imaginary CM field.} or totally real field, $l$ a rational prime, $r:\Gal (\Fbar/F) \ra \GL_n(\Z_l)$ a representation, and $\chi:\Gal (\Fbar/F) \ra \Z_l^\times$ a character. We say that $r$ is \emph{essentially conjugate self dual with similitude factor $\chi$}, if there exists an isomorphism $r^c \cong r^\vee\otimes\chi$. (We will be most interested in the case where $\chi=\eps_l^{1-n}$, and in this case we simply call $r$ \emph{conjugate self dual}.) By a \emph{polarized} representation (with similitude factor $\chi$) we mean a representation which is essentially conjugate self dual with similitude factor $\chi$, equipped with a specific choice of isomorphism $r^c \cong r^\vee\otimes\chi$. (We sometimes call the choice of isomorphism the \emph{polarization}.)
 
 We can think such an isomorphism as giving us a pairing $\langle*,*\rangle$ on $(\Z_l)^n$ satisfying $\langle r(\sigma) v_1,r({}^c\sigma) v_2\rangle=\chi(\sigma)\langle v_1,v_2\rangle$ for each $\sigma\in \Gal (\Fbar/F)$ and $v_1,v_2\in(\Z_l)^n$. If $r$ is in addition assumed to be absolutely irreducible, this pairing will either be symmetric or antisymmetric. We define the \emph{sign} of $r$ to be +1 if the pairing is symmetric, -1 if it is antisymmetric, and write $\sgn r$ for the sign of $r$. 
\end{definition}

The point of this definition is that just as two dimensional Galois representations come in two kinds, odd and even, with radically different properties (odd representations are generally well-behaved and even representations are a mystery), there is a similar dichotomy for higher-dimensional representations. This is what is captured by the Bella\"iche-Chenevier sign. Those representations with sign +1 are the `good' ones (generalizing odd two dimensional representations), and it will come as little surprise that we will have to restrict our theorems to such representations. (The theorems of \cite{hsbt} contain such a restriction implicitly, since they deal only with symplectic Galois representations with totally odd multiplier, which will have sign +1 automatically.)  

More precisely, suppose $F$ is a totally real field, $K$ a totally imaginary quadratic extension, and $\Pi$ is a regular algebraic, conjugate self dual, cuspidal automorphic representation of $\GL_n$ over $K$. Then there is a Galois representation associated to $\Pi$ by the work of Shin \cite{shin} and the many coauthors of the Paris book project \cite{parisbook}. Bella\"iche and Chenevier prove (see \cite[Theorem 1.2]{bc}) that the Galois representation associated to $\Pi$ will have sign +1 in the sense they define. Thus, since our aim when we prove our potential automorphy theorem is to start with a Galois representation $r$ and eventually find a $\Pi$ whose corresponding Galois representation (in the sense just discussed) is a restriction of $r$, our objective can only be possible if the restriction of $r$ (and hence $r$ itself) have sign +1.

For further information on the Bella\"iche-Chenevier sign, see the next subsection, where we make explicit the connection between this sign and the usual notion of `oddness' for a two-dimensional representation.

\smallskip Since we restrict our attention to polarized representations (in the sense of Definition \ref{def:bc-sign}) with similitude factor $\eps_l^{1-n}$, with sign +1, and \emph{with $\Z_l$ coefficients}, there is some further information that we can extract. The polarization gives us a symmetric pairing on the underlying vector space $V$ of the representation, and we can reduce mod $l$ to get a symmetric pairing on the $\F_l$ vector space $V\otimes\F_l$. Such a pairing has an associated invariant called the \emph{determinant}, which is a well defined element of $\F_l^\times/(\F_l^\times)^2$, the multiplicative group of elements of $\F_l$ modulo squares. 

Note that it is important to distinguish the determinant of $r$, which is a character of $G_F$, from the determinant of the pairing associated to the polarization of $\bar{r}$, which is an element of $\F_l^\times/(\F_l^\times)^2$. 
\begin{definition} Given a polarization on a representation $r$ as above, the \emph{determinant of the polarization} will refer to the determinant of the pairing associated to the polarization of $\bar{r}$. We will say the polarization \emph{has square determinant} if this determinant is the identity element of $\F_l^\times/(\F_l^\times)^2$. 
\end{definition}

This determinant of the polarization will add a technical restriction to our theorem: we will only be able to prove a representation $r$ potentially modular when the determinant of the polarization of $r$ is a square.\footnote{In case the reader is confused as to why one doesn't simply side-step this restriction by extending scalars (for the coefficients) to \emph{ensure} that the polarization determinant is a square, we remark that this is not allowed by one of the other conditions of our theorem. Specifically, our theorem, like that of \cite{hsbt}, requires that the coefficients of the representation are $\Q_l$ and not any extension field.} It is worth remarking that while the sign +1 restriction reflects a deep reality in Galois representations, the restriction on the determinant of the polarization appears to be a relatively shallow technical problem: for instance, the polarization determinant invariant becomes meaningless if we allow extension of the field of coefficients. Thus one might hope that this restriction might be removed in future work.

Finally, we recall that given a number field $F$, a finite set $S$ of places of $F$, a rational prime $l$, and a Galois representation $r: G_F\to \GL_n(\Q_l)$, we say that $\rho$ is \emph{automorphic of type $\{\Sp_n(1)\}_{v\in S}$} if there is an RAESDC representation $\Pi$ of $\GL_n(\A_F)$ of weight 0, whose local component at every place in $S$ is an unramified twist of the Steinberg representation, such that for all finite places $v$ of $F$, the Weil-Deligne representation associated to the restriction of $r$ to the decomposition group at $v$ is associated to the local component of $\Pi$ at $v$ via the local Langlands correspondence. 

We are now in a position to state our main theorem. 

\begin{theorem} \label{orig-main-theorem}
For each pair of positive integers $n,N$ with $N\geq n+5$, $n$ even, and $N$ odd, we can find a constant $C(n,N)$ and a quadratic extension $F^*(n,N)$ of $\Q(\mu_N)$ with the following property\footnote{The astute reader will note that since $N\geq n+5$, we could consider the constant $C$ as just depending on $N$, by taking an appropriate maximum over $n$. Nonetheless, I have chosen to emphasize $n$, which is in some sense much more important than $N$, by leaving it in the notation.}:

Suppose that $F$ is a CM field containing $\mu_N$. Suppose that $l>C(n,N)$ is a rational prime which is unramified in $F$ and $l\equiv 1 \mod N$. Suppose in addition that $l$ splits in $F^*(n,N)$. Let $v_q$ be a prime of $F$ above a rational prime $q\neq l$ such that  $q\notdiv N$. Let $\cL$ be a finite set of primes of $F$ not containing primes above $lq$.

Suppose that we are given a representation 
$$r:\Gal (\Fbar/F) \ra \GL_n(\Z_l)$$
enjoying the following properties:
\begin{enumerate}
\item $r$ ramifies only at finitely many primes.
\item $r^c \cong r^\vee\eps_l^{1-n}$
\item $r$ has sign +1, the the sense of Bella\"iche-Chenevier.
\item For each prime $\fw|l$ of $F$, $r|_{\Gal(\Fbar_\fw/F_\fw)}$ is crystalline with Hodge-Tate numbers $\{0,1,\dots, n-1\}$
\item $r$ is unramified at all the primes of $\cL$
\item $(r|_{\Gal(\Fbar_{v_q}/F_{v_q})})\ssm $ and $\bar{r}|_{\Gal(\Fbar_{v_q}/F_{v_q})}$ are unramified, with $(r|_{\Gal(\Fbar_{v_q}/F_{v_q})}) \ssm$ having Frobenius eigenvalues $1, (\#k(v_q)), \dots, (\#k(v_q))^{n-1}$
\item $\det \bar{r}\cong \eps_l^{n(1-n)/2}$ mod $l$
\item $\bar{r}|_{\Gal(\Fbar/F(\zeta_l))}$ is `big'.\footnote{Or more precisely, if we let $r'$ denote the extension of $r$ to a continuous homomorphism $\Gal(\Fbar/F^+)\ra\cG_n(\Qbar_l)$ as described in section 1 of \cite{cht}; then $\bar{r}'|_{\Gal(\Fbar/F(\zeta_l)}$ is `big'.}
\item $\Fbar^{\ker\ad \bar{r}}$ does not contain $F(\zeta_l)$
\item $\bar{r}$ satisfies, for each prime $\fw|l$ of $F$:
$$\bar{r}|_{I_{F_\fw}} \cong 1 \oplus \eps_l^{-1} \oplus \dots \oplus  \eps_l^{1-n} $$
\item $\bar{r}$ admits a polarization with determinant a square.
\end{enumerate}

Then there is a CM field $F'$ containing $F$ and linearly independent from  $\Fbar^{\ker \bar{r}}$ over $F$. In addition, all primes of $\cL$ and all primes of $F$ above $l$ are unramified in $F'$. Finally, there is a prime $w_q$ of $F'$ over $v_q$ such that $r|_{\Gal(\Fbar/F')}$ is automorphic of weight 0 and type $\{\Sp_n(1)\}_{\{w_q\}}$.

Moreover, if at the same time we are given $F$ we are given a CM subfield $F_0$ of $F$ which also contains $\mu_N$, then we can additionally arrange that $F'$ is Galois over $F_0$.
\end{theorem}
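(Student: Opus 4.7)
The plan is to adapt the Harris--Shepherd-Barron--Taylor potential automorphy strategy from \cite{hsbt}, replacing their use of the $\GSp_n$-valued Galois representation on the $H$-invariant part of the cohomology of the Dwork hypersurface with a $\GL_n$-valued representation on a suitably chosen \emph{non-invariant} $\chi$-isotypic summand. Concretely, let $Y_t \subset \Pj^{N-1}$ denote the Dwork pencil $\sum x_i^N = N t \prod x_i$ with its natural finite symmetry group $H$; then $H^{N-2}(Y_t, \Q_l)$ decomposes into $\chi$-isotypic pieces as $\chi$ ranges over characters of $H$. The first step, which is combinatorial, is to identify a character $\chi$ (depending on $n$ and $N$) for which the $\chi$-isotypic summand is $n$-dimensional, carries Hodge--Tate weights $\{0,1,\dots,n-1\}$, and is self-dual under Poincar\'e duality via a \emph{symmetric} pairing, so that the resulting representation has sign $+1$ in the Bella\"iche--Chenevier sense. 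The numerical constraints ``$n$ even, $N$ odd, $N \geq n+5$'' should drop out of this search, and the square-determinant-of-polarization hypothesis is precisely what allows this geometric pairing to be identified with the given pairing on $\bar r$ after reduction mod $l$.

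With the correct $\chi$ in hand, the rest is a standard three-step ``switching primes'' argument in the style of \cite{hsbt}. First, a Moret-Bailly argument on the Dwork parameter space, with level structure engineered to impose a prescribed mod $l$ isomorphism of $\chi$-isotypic pieces, produces a CM extension $F'/F$ linearly disjoint from $\Fbar^{\ker\bar r}$ and a value $t \in F'$ such that the $\chi$-isotypic piece of $H^{N-2}(Y_t, \Z_l)$ matches $\bar r|_{G_{F'}}$ as a polarized Galois module, while controlling ramification at $\cL$, at primes above $l$, and at $v_q$ (the Steinberg condition at $w_q|v_q$ arises by choosing $t$ so that $Y_t$ has an appropriate degeneration there). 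Second, one produces automorphy of the $\chi$-isotypic part modulo an auxiliary prime $l' \ne l$ by passing to a special fibre of the Dwork family in which the $\chi$-isotypic piece is visibly a sum of representations induced from known automorphic forms on smaller groups (algebraic Hecke characters or Hilbert modular forms over a CM subfield of $\Q(\mu_N)$). Third, one applies the Clozel--Harris--Taylor automorphy lifting theorem of \cite{cht} twice: at $l'$ to deduce automorphy of the full $l'$-adic Dwork representation, and then at $l$ to transfer that automorphy to $r|_{G_{F'}}$ via the mod $l$ congruence just constructed. Hypotheses (7)--(10) of the theorem are there precisely to feed into this lifting theorem.

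The principal obstacle is not the overall shape of the argument, but the geometric bookkeeping on the non-invariant isotypic piece: proving that Poincar\'e duality produces a symmetric pairing with square determinant on the $\chi$-summand, and exhibiting a degenerate fibre of the Dwork family in which this piece decomposes as a sum of inductions from known automorphic forms, both require genuinely new calculations with the $H$-action on the cohomology of $Y_t$. The delicate signs here are the main source of the parity hypotheses on $n$ and $N$. Finally, the refinement that $F'$ can be taken Galois over a given CM subfield $F_0 \supseteq \mu_N$ should follow by the same device as in \cite{hsbt}: replace $F'$ by its Galois closure over $F_0$ and verify that the relevant linear-disjointness and ramification conditions survive the enlargement.
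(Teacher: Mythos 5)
Your outline has the right global shape (a non-invariant isotypic piece of the Dwork cohomology, a Moret-Bailly step, a prime-switching argument, and two applications of an automorphy lifting theorem), but the way you assemble the auxiliary prime $l'$ contains a genuine structural gap. In the paper the two congruences are imposed \emph{simultaneously at the same point} $t$: one first constructs, in advance, a seed residual representation $\bar{r}'$ mod $l'$ as the reduction of an automorphic induction $I(\bar{\theta})$ of a character, exactly as in \cite{hsbt} (this is Proposition \ref{seed-prop}; crucially $\bar{r}'$ is irreducible with big image, unramified where needed, and comes with a Steinberg-compatible place), and then the Moret-Bailly argument (Proposition \ref{geom-prop}) produces one $t$ over a CM extension with $\Prim[l]_t$ matching $\bar{r}$ \emph{and} $\Prim[l']_t$ matching $\bar{r}'$. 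Automorphy of $\Prim_{l',t}$ then follows from the lifting theorem at $l'$, passes to $\Prim_{l,t}$ by the compatible system, and finally to $r$ by lifting at $l$. Your version imposes only the mod $l$ condition at $t$ and proposes to get mod $l'$ automorphy ``by passing to a special fibre of the Dwork family in which the $\chi$-isotypic piece is visibly a sum of inductions.'' That breaks the chain twice over: automorphy of the $l'$-adic representation of a \emph{different} fibre says nothing about the fibre at $t$; and a fibre whose piece is a sum of inductions of Hecke characters has reducible residual image at $l'$, so the lifting theorem you need there (which requires irreducibility, bigness, and a Steinberg place) cannot be applied to it. In the paper the only special fibre ever used is $t=0$, and only to compute $\Prim_{l,0}|_{I_\fw}$ as a sum of powers of the cyclotomic character, a purely local input to the Moret-Bailly step.

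A second inaccuracy, plus the places where the real work lies: the chosen piece $\Prim^{N-2}_{l,[v]}$ is \emph{not} self-dual with a symmetric pairing. Poincar\'e duality pairs the $[v]$-piece with the $[-v]$-piece, which is identified with its complex conjugate, so one obtains essential \emph{conjugate} self-duality with similitude factor $\eps_l^{2-N}$; the Bella\"iche--Chenevier sign and the square-determinant bookkeeping live in that setting. Indeed a literal symmetric self-duality would force orthogonal monodromy, whereas the whole point (and Katz's Lemma 10.3, via the choice of $v$ with $-v$ not a permutation of $v$) is that the geometric monodromy is all of $\SL_n$ — needed both so that genuinely $\GL_n$-valued $r$ can be matched and so that the Moret-Bailly moduli space is geometrically connected (Corollary \ref{monodromy-mod-prop}). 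Finally, your plan defers to ``geometric bookkeeping'' the ingredients that actually carry the argument: the hypergeometric-sheaf computation of $\det\Prim_l$ (Lemma \ref{det-lemma}), which is what makes the determinant hypothesis (7) usable and produces the twist $(\vec h)$ normalizing the Hodge--Tate numbers to $\{0,\dots,n-1\}$; Proposition \ref{split-det-pol-prop}, which is where $F^*(n,N)$ comes from; and the Galois-descent representability result (Proposition \ref{representable-lemma}), without which one cannot pose the isomorphism moduli problem over the totally real subfield — essential because Moret-Bailly only yields totally real points while the congruences live over the CM field. These are not routine verifications but the core new content, so the proposal as it stands is a correct heuristic of the paper's strategy rather than a proof.
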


This theorem generalizes work of Harris, Shepherd-Barron and Taylor. The key advances in this work are
\begin{itemize}
\item The representation $r$ can now map into $\GL_n$; in the earlier work, it was required to map into $\GSp_n$. (Restrictions were also placed on the multiplier.)
\item The ability to vary the integer $N$ is new. In the earlier work, $n+1$ replaces $N$ in all conditions above which refer to $N$, and no integer $N$ is mentioned. This makes these conditions significantly more restrictive, for instance, the older theorem requires that $l\equiv 1 \mod n+1$.
\end{itemize}

This paper relies heavily on work of Katz in \cite{k} and on the lifting theorems of Clozel, Harris and Taylor in \cite{cht}. The question of looking at other parts of the cohomology of the Dwork hypersurface was raised by Guralnick, Harris and Katz in \cite{ghk}.

\subsection{The Bella\"iche-Chenevier sign}\label{sec:bc sign}
As was explained above, one of the key conditions in the main theorem of this paper concerns the Bellai\"che-Chenevier sign of the representation which we would like to prove modular; the theorem only applies to representations with sign +1. It was also mentioned that the condition that the sign be +1 extends the familiar notion of oddness for a 2-dimensional Galois representation over a totally real field. (If $F$ is a totally real field and $r:G_F\to\GL_2(\Q_l)$, we call $r$ \emph{odd} if $\det(r(c))=-1$ for any complex conjugation $c$ in $G_F$.) Since we anticipate that the notion of the  Bellai\"che-Chenevier sign may be somewhat unfamiliar, we will pause now to explain the connection to the familiar notion of oddness in detail. 

Before we can describe this connection, however, we must introduce some further notions, which the reader should compare to those in Definition \ref{def:bc-sign}.
\begin{definition} 
Let $F$ be a number field (usually, for our purposes, totally real), $l$ a rational prime, $r:\Gal (\Fbar/F) \ra \GL_n(\Z_l)$ a representation, and $\chi:\Gal (\Fbar/F) \ra \Z_l^\times$ a character. We say that $r$ is \emph{essentially self dual (with similitude factor $\chi$)}, if there exists an isomorphism $r \cong r^\vee \otimes\chi$. 
\end{definition}
If $r$ is an essentially self dual representation, we can think of the choice of an isomorphism as in the definition as being the same as giving a pairing $\langle|*,*|\rangle$ on $(\Z_l)^n$ satisfying $\langle| r(\sigma) v_1,r(\sigma) v_2|\rangle=\chi(\sigma)\langle| v_1,v_2|\rangle$ for each $\sigma\in \Gal (\Fbar/F)$ and $v_1,v_2\in(\Z_l)^n$; we use the slightly cumbersome notation $\langle|*,*|\rangle$ in order to visually distinguish pairings arising from essential self duality from those arising form essential conjugate self duality, since (as we will soon see) it is possible for a single representation space to have both kinds of pairing simultaneously. As with conjugate self duality, if $r$ is absolutely irreducible, the $\langle|*,*|\rangle$ pairing will either be symmetric or antisymmetric. We define the \emph{SD-sign} of $r$ to be +1 if the pairing is symmetric, -1 if it is antisymmetric, and write $\sgn_{SD} r$ for this SD-sign. (Again, the cumbersome notation is to make clear the distinction between this notion of sign and the notion introduced in Definition \ref{def:bc-sign} for (essentially) conjugate self dual representations.)

Usually, it is the case that `most Galois representations are not self-dual', but for two dimensional representations, \emph{all} Galois representations are self dual. Concretely, we can put a natural symplectic pairing on $(\Z_l)^2$ given by $\langle|v_1,v_2|\rangle\mapsto \det(v_1|v_2)$, where $(v_1|v_2)$ is a matrix with columns $v_1$ and $v_2$; then this pairing will have the property that $\langle|Mv_1,Mv_2|\rangle=\det M\langle|v_1,v_2|\rangle$ for any endomorphism $M$. Thus if $r$ is a two dimensional Galois representation, we will have $\langle|r(\sigma)v_1,r(\sigma)v_2|\rangle=\det r(\sigma)\langle|v_1,v_2|\rangle\mapsto \det(v_1|v_2)$, and hence $\langle|*,*|\rangle$ gives an isomorphism $r \cong r^\vee \otimes(\det r)$ exhibiting $r$ as essentially self dual with similitude factor $\det r$. Notice that since this pairing is symplectic, we always have $\sgn_{SD} r=-1$.

Now, let us consider Galois representations of arbitrary dimension, but study the case where the representation is defined over a totally real field: let us say $F$ is a totally real field and $r:\Gal (\Fbar/F) \ra \GL_n(\Z_l)$. In this case, the group automorphism of $\Gal(\Fbar/F)$ given by conjugation by a complex conjugation $c$ will be an inner automorphism and hence we see $r^c\cong r$. It follows that the notion of an essentially self dual with similitude factor $\chi$ (satisfying $r \cong r^\vee \otimes\chi$) coincides in this case with the notion of an essentially conjugate self dual representation (satisfying $r^c \cong r^\vee \otimes\chi$). Concretely, given a pairing  $\langle|*,*|\rangle$ encoding the essential self duality (and therefore satisfying satisfying $\langle| r(\sigma) v_1,r(\sigma) v_2|\rangle=\chi(\sigma)\langle| v_1,v_2|\rangle$), we can define a pairing $\langle*,\rangle$ by 
$$\langle v_1,v_2\rangle=\langle|  v_1,r(c) v_2|\rangle$$
where $c$ is any choice of complex conjugation. We see then that this pairing will satisfy $\langle r(\sigma) v_1,r({}^c\sigma) v_2\rangle=\chi(\sigma)\langle v_1,v_2\rangle$ and so encodes an essential conjugate self duality. (There is of course a similar formula for passing from the essential conjugate self duality pairing to the essential self duality pairing.)

The reason we introduce these somewhat explicit formulae is that we can now relate the sign of $r$ with its SD sign, as follows. We have that  $\langle v,w\rangle=\sgn(r)\langle w,v\rangle$; but on the other hand
\begin{align*}
\langle v,w\rangle &= \langle|  v,r(c) w|\rangle=\sgn_{SD} (r)\langle|  r(c)w, v|\rangle
 				=\sgn_{SD} (r)\langle| r(c)w, r(c)^2 v|\rangle \\
 				&=\sgn_{SD}(r) \chi(c) \langle| w, r(c) v|\rangle 
				=\sgn_{SD}(r) \chi(c) \langle w, v\rangle 
\end{align*}
It follows that $\sgn(r)=\sgn_{SD}(r) \chi(c)$: that is, the sign of $r$ is the product of the SD sign and the value of the multiplier on complex conjugations.

Thus we have seen that for two dimensional representations (of the Galois group of any number field) there is a natural essential self-duality pairing with SD-sign -1. We have also seen that for representations of totally real fields (of any dimension), one can interconvert essential self duality pairings and essential conjugate self-duality pairings, and related their signs. In the case of a two dimensional representation over a totally real field, we apply both of these ideas. We see that a two dimensional representation of the Galois group of a totally real field will automatically have an essential self-duality pairing with similitude factor $\chi=\det r$ and SD-sign -1; this will in turn give us a conjugate essential self-duality pairing with sign $-1\times\chi(c)=-(\det r)(c)$. 

From this it follows that the Bella\"iche-Chenevier sign will be +1 if and only if $(\det r)(c)=-1$, which is the classical definition of an odd 2 dimensional representation.

\subsection{The strategy}\label{sec:strat}

I will now describe the strategy of the argument. There are several stages:
\begin{enumerate}
\item We begin by introducing the Dwork family. For each integer $N$, the Dwork family $Y\subset \Pj^{N-1}\times \Pj^1$ is a projective family of hypersurfaces over $\Pj^1$, smooth over $\A^1\backslash\mu_N$, where $\mu_N$ denote the $N$th roots of 1. It has general equation $\nu(X_1^{N}+X_2^{N}+\dots+X_N^{N}) = N\lambda X_1 X_2\dots X_N$ where $(\nu:\lambda)\in\Pj^1$ is the parameter. Much of our work will consist of studying the relative cohomology of this family over the base $\Pj^1$.
\item We introduce a action of the group $(\mu_N)^N$ on this family, and use this action (and results of Katz) to decompose the relative cohomology of the family with $l$-adic coefficients into pieces. Having chosen an even integer $n<N-4$, we single out one of these pieces, $\Prim^{N-2}_{l,[v]}$. This piece is an \'etale sheaf on $\Pj^1$, of dimension $n$, and lisse over $\A^1\backslash\mu_N$. We study various properties of this piece. Most importantly, we \emph{(a)} calculate the Hodge-Tate numbers (showing that they form an unbroken sequence without gaps or repetitions), and \emph{(b)} study the monodromy of the sheaf, showing that it is the entire special linear group. (We establish various other properties of lesser importance.) Finally, we deduce similar facts about the monodromy of the corresponding piece of the cohomology with mod $M$ coefficients, showing that this is also the full special linear group as long as $M$ is divisible only primes above a certain bound $C(n,N)$.
\item We then consider the following question: given two rational primes $l$ and $l'$, a mod $l$ Galois representation $\bar{r}$, and another mod $l'$ Galois representation $\bar{r}'$, both over a CM field $K$, can we find a variety in our family `whose mod $l$ cohomology looks like $\bar{r}$ and whose mod $l'$ cohomology looks like $\bar{r}'$'? More formally, we ask if we can find two things: first, a CM extension $K'$ of $K$, linearly disjoint from the fixed field of the kernel of $\bar{r}$ (and similarly disjoint from the fixed field of the kernel of $\bar{r}'$), and unramified at $l$ and $l'$. And second, a point $t\in (\A^1\backslash\mu_N)(K')$ such that\footnote{Actually, further conditions are imposed on this field, but we focus on the most important ones in this sketch.} the fiber of the Dwork family over $t$ has mod $l$ cohomology which agrees with $\bar{r}|_{G_{K'}}$ (as a Galois representation of $G_{K'}$) and mod $l'$ cohomology which agrees with $\bar{r}'|_{G_{K'}}$. We show that the answer is `yes' under certain conditions, using the following strategy. First, we reduce the problem to showing that a certain variety defined over the totally real subfield of $K$ has a point over some totally real extension. Then, we use the theorem of Moret-Bailly (a general theorem of arithmetic geometry which lets one find points over extension fields) to show that this follows from the geometric irreducibility of the variety in question, and a series of local checks at various places. The geometric irreducibility can then be deduced from the results on the monodromy of $\Prim^{N-2}_{l,[v]}$ established in the previous part, while the sundry local checks turn out to be straightforward given the other properties we established.
\item Then, for each prime $l'$ and CM field $K$ unramified at $l'$, we establish the existence of mod $l'$ Galois representations $\bar{r}'$ with the following property. Any $l'$-adic Galois representation:
\begin{itemize}
\item defined over a CM field extending $K$, unramified at $l'$ and linearly disjoint from $K^{\ker \bar{r}'}$,
\item which agrees with $\bar{r}'$ mod $l'$,
\item whose Hodge-Tate numbers form an unbroken sequence without gaps or repetitions,
\item and which satisfies certain other properties of lesser importance,
\end{itemize}
will automatically be automorphic. The basic method by which this is done is to establish a good supply of representations which are automatically known to be modular over any extension field---the ultimate source of these representations being the automorphic induction of characters---and then to apply automorphy lifting theorems; but there are complications involving the need of so-called `Steinberg hypotheses'. Luckily we need not pay too great attention to the technical issues here, since the construction of the representations that we need has already been carried out in \cite{hsbt,cht}, and we may simply cite the appropriate portions of their work.
\item We can now put the results we have established together to prove our main theorem. Given an integer $n$, a rational prime $l>C(n,N)$, a CM field $K$, and an $l$-adic Galois representation $r:G_K\to\GL_n(\Z_l)$ of dimension $n$ which we would like to prove potentially modular, we find (using the previous item) an $\bar{r}'$ with the property described there. We then use point (2) to find a CM extension $K'$ of $K$, unramified at $l$ and $l'$ and linearly disjoint from the fixed fields of the kernels of $\bar{r}$ and $\bar{r}'$, and a point $t\in (\A^1\backslash\mu_N)(K')$, such that the fiber of the Dwork family over $t$ ($Y_t$, say) has mod $l$ cohomology which agrees with $\bar{r}|_{G_{K'}}$ (as a Galois representation of $G_{K'}$) and mod $l'$ cohomology which agrees with $\bar{r}'|_{G_{K'}}$. By the key property of $\bar{r}'$ discussed in the previous point, this allows us to deduce that the $l'$-adic cohomology of $Y_t$ is automorphic. (Note that here we use the fact about the Hodge-Tate numbers of $\Prim^{N-2}_{l',[v]}$ established in point (1).) Tautologically we deduce that the $l$-adic cohomology is autmorphic too. Finally, we apply a modularity lifting theorem (again using the fact about the Hodge-Tate numbers from point (1)) to deduce that $r|_{G_{K'}}$ is automorphic, since it agrees mod $l$ with $\Prim^{N-2}_{l,[v]}$ .
\end{enumerate}
Of course, the sketch above omits many minor details (for instance, the theorems we apply generally have many conditions, and we must carefully manage the bookkeeping to ensure that we always have the conditions we require when we wish to apply such a theorem). Nonetheless, it gives the main shape of the argument.

I will now describe the organization of the remainder of the paper. In section \ref{sec:geom} we study the geometry of the Dwork family, discussing the properties which we will require in our argument (steps (1) and (2) in the strategy above). In section \ref{sec:moret-bailly} we carry out the argument, using the Theorem of Moret-Bailly, which allows us to find varieties in the Dwork family over large extension fields whose cohomology mod $l$ and mod $l'$ agrees with the restrictions of mod $l$ and mod $l'$ representations $\bar{r}$ and $\bar{r}'$  we have been given independently (step (3) in the strategy above). In section \ref{sec:seed} we find mod $l'$ representations that we can use to deduce $l'$-adic representations modular (step (4)), and finally in section \ref{sec:final} we put the pieces together to prove our main theorem.

{\bf Author's note:} An earlier version of this paper erroneously claimed a version of Theorem \ref{orig-main-theorem} with condition (7) replaced by the weaker condition $(\det \bar{r})^2\cong \eps_l^{n(1-n)}$ mod $l$, which appears not to be accessible by the methods of this paper. I thank the anonymous referee for pointing out the error. A forthcoming version of the manuscript \cite{blggt} will use rather different methods to prove potential automorphy theorems which are stronger than those proved in this paper and which include the result originally claimed in this paper as a special case. (A version of this paper has already been circulated proving this analogue of this stronger result for totally real fields; the new manuscript will extend the result to CM fields.) 

It is perhaps also worth remarking that in almost every application, the condition (7) (in either its old or new forms) is completely harmless anyway. To see why, we first recall that in almost every application, one has a compatible family of representations $r_l$, one for each rational prime $l$, and one generally has a great deal of freedom to choose $l$ to have whatever properties are necessary for the rest of the argument. (In particular, this is how one achieves the condition that $l$ split in the field $F^*$ of Theorem \ref{orig-main-theorem}.) In such a situation, one can arrange that $\Q_l$ contains a good supply of roots of unity, and in this case, one use a twisting argument to eliminate condition (7) entirely. The details are briefly sketched in section \ref{sec:twisting}.

{\bf Acknowledgements:} I would like to thank my advisor, Richard Taylor, for suggesting this problem to me and for immeasurable help in all aspects of my work on it. I would also like to thank the anonymous referee for many helpful suggestions, and (as I have mentioned) for pointing out an important oversight.

\section{Geometry}\label{sec:geom}

\subsection{The Dwork family} Our aim in this section is to prove a proposition that allows us to find varieties with prescribed residual representations, and in order to do so we must introduce the Dwork family, within which we will find the varieties we seek. Let $N$ be a positive integer. Fix a base ring $R_0=\Z[\frac{1}{N},\mu_N]$, where $\mu_N$ denotes the $N$th roots of unity. We consider the scheme $Y$:
$$Y\subset \Pj^{N-1} \times \Pj^1$$
over $R_0$ defined by the equations
$$\nu(X_1^{N}+X_2^{N}+\dots+X_N^{N}) = N\lambda X_1 X_2\dots X_N$$
(using $(X_1:\dots:X_N)$ and $(\nu:\lambda)$ as coordinates on $\Pj^{N-1}$ and $\Pj^1$ respectively.) We consider $Y$ as a family of schemes over $\Pj^1$ by projection to the second factor. We will label points on this $\Pj^1$ using the affine coordinate $t=\lambda/\nu$, and will write $Y_t$ for the fiber of $Y$ above $t$. (The notation broadly follows Katz's paper \cite{k}, except that I use $N$ in place of his $n$, $Y$ for his $\bbX$, and the varieties I consider are less general than his---corresponding to the case $W=(1,1,\dots,1)$ and $d=n$ in his notation. In particular, our notation is \emph{not} directly compatible with the notation of \cite{hsbt}.)

There is a natural group acting on this family. Let $\mu_N$ denote the $N$th roots of unity in $R_0$, and let $\Gamma$ denote the $N$ fold power $(\mu_N)^N$. Let $\Gamma_W$ denote the subgroup of $\Gamma$ consisting of all elements $(\zeta_1,\dots \zeta_N)$ with $\prod_{i=1}^N \zeta_i=1$ and let $\Delta$ denote $\mu_N$ embedded diagonally in $\Gamma$. Then the group $\Gamma_W$ acts on $Y$ with the element $(\zeta_1,\dots \zeta_N)$ acting via
$$((X_1:\dots:X_N),t)\mapsto ((\zeta_1 X_1:\dots:\zeta_N X_N),t)$$
The subgroup $\Delta$ acts trivially. 

The family $Y$ is smooth over the open set $U=\Spec R_0[\lambda,\frac{1}{\lambda^N-1}]\subset \Pj^1(R_0)$. We will now construct certain sheaves on $U$. Let $l$ be a prime number which splits in $\Q(\mu_N)$, and assume we have chosen an embedding $\iota$ of $R_0$ into $\Qbar_l$. Let $T^{(l)}_0=U\times_{\Spec R_0}\Spec R_0[1/l]$, and form lisse sheaves
\begin{align}
\cF^i_l &:= R^i\pi_* \Q_l		\\
\cF^i[l] &:= R^i\pi_* \Z/l\Z
\end{align}
on $T^{(l)}_0$. (We will suppress  the superscript $(l)$ where it is clear from context.) Similarly, let $M$ be an integer, $T^{(M)}_0=U\times_{\Spec R_0}\Spec R_0[1/M]$ (=$\Spec R_0[\lambda,\frac{1}{\lambda^N-1},1/M]$), and define a lisse sheaf $\cF^i[M] := R^i\pi_* \Z/M\Z$ on $T^{(M)}_0$.

We are interested particularly in the sheaf $\cF_l^{N-2}|_{T_0^{(M)}}$. Form now on, we assume that $N$ is odd; for such an $N$, we write $\Prim_l^{N-2}$ for this $\cF_l^{N-2}|_{T_0^{(M)}}$. (For more general $N$, $\Prim_l^{N-2}$ could be a little more complicated---we might have to take the annihilator under cup product of some cohomology class coming from a power of the hyperplane class from the ambient $\Pj$. But for us this simple definition suffices.)
As has been remarked, $\Gamma_W/\Delta$ acts on our family, and so acts on the sheaf $\Prim_l^{N-2}$ we have just defined: thus we can decompose $\Prim_l^{N-2}$ into eigensheaves according to the characters of the group $\Gamma_W/\Delta$. Note that the coefficient ring of these sheaves will still be $\Q_l$, since $l$ was chosen to split in $\Q(\mu_N)$.

The character group of $\Gamma$ is $(\Z/N\Z)^N$; that of $\Gamma_W$ is $(\Z/N\Z)^N / \langle W\rangle$ where we write $W$ for the element $(1,1,\dots,1)$; and the character group of $\Gamma_W/\Delta$ is $(\Z/N\Z)_0^N / \langle W\rangle$ where we write $(\Z/N\Z)_0^N$ for $\{(v_1,\dots,v_N)\in (\Z/N\Z)^N|\sum_i v_i = 0\}$. Thus the eigensheaves are labeled by elements of $(\Z/N\Z)_0^N / \langle W\rangle$: we will write such an element as $(v_1,\dots,v_N)$ mod $W$ or simply as $[(v_1,\dots,v_N)]$, and shall write $\Prim^{N-2}_{l,[(v_1,\dots,v_N)]}$ for the piece of $\Prim_l^{N-2}$ where $\Gamma_W/\Delta$ acts via $[(v_1,\dots,v_N)]$. Note that this labeling \emph{depends on the choice of embedding} $\iota:R_0\ra\Q_l$, since it requires us to have a preferred identification of the roots of unity in the coefficient ring $\Q_l$ of the cohomology with the roots of unity in $R_0$.

We now are in a position to single out the particular piece of the cohomology with which we will work. From now on we will assume that we have another positive even integer $n$ in mind, with $N\geq n+5$. (This will be the dimension of the Galois representation which we will be working with in the end.) We will write $k$ for $n/2$, and (assuming for the moment that $n>2$) we will set 
\begin{align*}
v = (0,\dots,&0, 2, k+1,k+3,k+4, \dots, N-k-2,N-1)
\end{align*}
where we include every number once, except we \emph{omit} the ranges 
$3,\dots,k$, and $N-k-1,\dots,N-2$, and the singletons 1 and $k+2$, and where the number of 0s at the beginning is $n+1$, calculated to ensure that there are $N$ numbers in total. Note that these numbers add up to 0 mod $N$. Note also that the ranges above `make sense' as long as $N \geq n+5$. (For instance, if $n=4, N=9$, we take $v=(0,0,0,0,0,2,3,5,8)$.) Note finally that this choice of $v$ has the property that $-v$ is never a permutation of $v$.

On the other hand, if $n=2$, we take $v=(0,0,0,2,3,\dots,N-2)$, where we include every number once except we omit 1 and $N-1$ and include 0 three times. We see again that these numbers add up to 0 mod $N$, but that for $n=2$, $-v$ is in fact a permutation of $v$.

We will work with the piece $\Prim^{N-2}_{l,[v]}$ under which $\Gamma_W/\Delta$ acts via this $v$ mod $W$.\footnote{This is the point of where we part company from \cite{hsbt}; they work with $N=n+1$ and the piece $[(0,0,\dots,0)]$.} I will often write $\Prim_{l}$ for this sheaf, with the remaining data being understood. I will write $\Prim[l]$ for the corresponding sheaf constructed from $\cF^i[l]$, and $\Prim[M]$ from the corresponding sheaf constructed from $\cF^i[M]$.

\begin{proposition} \label{geom-lemma}
Let $F$ will denote to a CM field containing 
$R_0$, $v$ to a place of $F$.  There is a natural morphism 
$\Spec F[\lambda,\frac{1}{\lambda^N-1}]=\Spec F\times_{\Spec R_0}T^{(M)}_0\to\Spec T^{(M)}_0$
and by pulling back along it, we consider $\Prim_{l}$ and $\Prim[l]$ as sheaves on 
$\Spec F[\lambda,\frac{1}{\lambda^N-1}]$.

Then we have the following facts about the varieties $Y_t$ and the sheaves $\Prim[l]$, 
$\Prim_l$ and $\cF^i[l]$. (Recall that we are assuming $l\equiv 1$ mod $N$ 
throughout.)
\begin{enumerate}
\item \label{geom-lemma-good-red}
If $t\in T^{(l)}_0(F)$ and $\fq$ is a place of $F$ such that $v_\fq(1-t^N)=0$, then $Y_t$ has good reduction at $\fq$.
\item \label{geom-lemma-dual}
Let us write $F^+$ for the maximal totally real subfield of $F$ and $c$ for the 
nontrivial element of $\Gal(F/F^+)$, which we can think of as a field automorphism of $F$. This $c$
then induces an automorphism of the ring $F[\lambda,\frac{1}{\lambda^N-1}]$ over which $\Prim_l$
is defined. We will write $\Prim_l^c$ (resp. $\Prim[l]^c$) for the pull-back of $\Prim_l$ 
(resp. $\Prim[l]$) along this map. Now suppose that $t\in T^{(l)}_0(F)$. Then the Galois representation 
$$\Prim_{l,t}:\Gal(\Fbar/F) \ra GL_n(\Q_l)$$  
satisfies $\Prim_{l,t}^c \cong \Prim_{l,t}^\vee \eps_l^{2-N}$. Similarly $\Prim[l]_{t}^c \cong \Prim[l]_{t}^\vee \eps_l^{2-N}$, and indeed these isomorphisms patch for different $t$ to give a sheaf isomorphism.
\item \label{geom-lemma-ht}
The sheaf $\Prim_{l}$ has rank $n$. There is a tuple $\vec{h}=(h(\sigma))_{\sigma\in\Hom(F,\Qbar_l)}$, such that the  Hodge-Tate numbers of $\Prim_{l}$ at the embedding $\sigma$ are $\{h(\sigma),h(\sigma)+1,\dots,h(\sigma)+n-1\}$.

\item \label{geom-lemma-inertia}
Let $\vec{h}$ continue to denote the tuple defined in the previous part. Suppose $\fw|l$, and let $\sigma\in\Hom(F,\Qbar_l)$ denote any of the corresponding embeddings. Then $\Prim_{l,0}|_{I_\fw}\cong \eps_l^{-h(\sigma)}\oplus\eps^{-h(\sigma)-1}_l\oplus\dots\oplus\eps_l^{1-h(\sigma)-n}$, and $\Prim[l]_0|_{I_\fw}\cong\eps_l^{-h(\sigma)}\oplus\eps^{-h(\sigma)-1}_l\oplus\dots\oplus\eps_l^{1-h(\sigma)-n}$
\item \label{geom-lemma-steinberg}
Let $\fq$ be a prime of $F$ above a rational prime which does not divide $N$. If $\lambda_\fq\in T^{(l)}_0(F_\fq)$ has $v_\fq(\lambda_\fq) < 0$, then $(\Prim_{l,\lambda_\fq})\ssm$ is unramified, and $(\Prim_{l,\lambda_\fq})\ssm(\Frob_\fq)$ has eigenvalues $\{\alpha,\alpha\#k(\fq),\alpha(\#k(\fq))^2,\dots,\alpha(\#k(\fq))^{n-1}\}$ for some $\alpha$.
\item \label{geom-lemma-unram-mod}
Let $\fq$ be a prime of $F$ above a rational prime which does not divide $N$. If $\lambda_\fq\in T^{(l)}_0(F_\fq)$ has $v_\fq(\lambda_\fq) < 0$ and $l|v_\fq(\lambda_\fq)$, then $(\Prim[l]_{\lambda_\fq})$ is unramified (even without semisimplification).
\item \label{geom-lemma-monod}
The monodromy of $\Prim_{l}$ maps into, and is Zariski dense in, $\SL_n$.
\end{enumerate} 
\end{proposition}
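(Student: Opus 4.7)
The plan is to extract statements (1)--(7) from three sources: direct computation with the explicit equations of $Y$; Poincaré duality combined with the $\Gamma_W/\Delta$-action on cohomology; and the deep Hodge-theoretic and monodromy calculations of Katz in \cite{k}. The real work, beyond citation, is verifying that our particular combinatorial choice of $v$ satisfies the hypotheses of Katz's theorems and then piecing together the local analyses at the Fermat fiber and at infinity.

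I would first dispatch the two easy geometric items. For (1), the Jacobian criterion applied to $\nu(\sum X_i^N)=N\lambda X_1\cdots X_N$ identifies the singular locus with $\{1-t^N=0\}$ together with the locus where $N$ is not a unit; the assumption $F\supset R_0=\Z[1/N,\mu_N]$ excludes primes over divisors of $N$, so $v_\fq(1-t^N)=0$ is enough. For (2), Poincaré duality on the smooth projective $Y_t$ of dimension $N-2$ gives a Galois- and $\Gamma_W/\Delta$-equivariant perfect pairing $H^{N-2}(Y_t)\otimes H^{N-2}(Y_t)\to\Q_l(2-N)$, which restricts to a perfect pairing between the $[v]$- and $[-v]$-isotypic pieces; this yields $\Prim_{l,[-v]}\cong\Prim_{l,[v]}^\vee\eps_l^{2-N}$. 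Since $c$ acts on $R_0$ by $\zeta\mapsto\zeta^{-1}$, it conjugates the $\Gamma_W/\Delta$-action by inversion, so the $c$-pullback of $\Prim_{l,[v]}$ is exactly $\Prim_{l,[-v]}$; combining yields the claim both as sheaves and fiberwise, and the $\Z/l\Z$-version is parallel since $l$ splits in $\Q(\mu_N)$.

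Parts (3), (5), and (7) are where I would cite Katz's main theorems. For (3), I would invoke his Griffiths--Dwork description of the Hodge filtration on each $[v]$-eigen piece and verify combinatorially that $v=(0,\ldots,0,2,k+1,k+3,\ldots,N-k-2,N-1)$ contributes exactly one Hodge number to each of $n$ consecutive Hodge slots, producing an unbroken block $\{h(\sigma),\ldots,h(\sigma)+n-1\}$ at every embedding. Part (7) is an application of Katz's monodromy theorem: the hypothesis that $-v$ is not a permutation of $v$ (for $n>2$) rules out symplectic or orthogonal monodromy, forcing the Zariski closure of the geometric monodromy to be the full $\SL_n$; the case $n=2$ (where $-v$ \emph{is} a permutation of $v$) needs a small separate treatment, but is also covered by \cite{k}. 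Part (5) follows from Katz's description of the local monodromy at $t=\infty$ on $\Prim^{N-2}_{l,[v]}$ as a single unipotent Jordan block of size $n$; via the Weil--Deligne dictionary, this immediately gives that the semisimplification is unramified and has Frobenius eigenvalues $\{\alpha,\alpha\#k(\fq),\ldots,\alpha(\#k(\fq))^{n-1}\}$ for some $\alpha$.

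For (4), $Y_0$ is the Fermat hypersurface $\sum X_i^N=0$, whose $l$-adic cohomology is CM: each inertial eigenvalue at $\fw\mid l$ is a power of $\eps_l$, and matching against the Hodge--Tate numbers from (3) forces $\Prim_{l,0}|_{I_\fw}\cong\bigoplus_{i=0}^{n-1}\eps_l^{-h(\sigma)-i}$. The mod-$l$ statement follows from the same analysis because $l\equiv 1\pmod N$ makes these characters pairwise distinct modulo $l$, so the integral lattice splits compatibly with the inertial decomposition. Finally (6) is a consequence of (5) and a binomial calculation: writing the monodromy generator at $\fq$ as $T$ with $(T-1)^n=0$ and $n<l$, one has for $l\mid a$ that $T^a=\sum_{k}\binom{a}{k}(T-1)^k\equiv 1\pmod l$, using $l\mid\binom{a}{k}$ for $1\le k<l$ and the vanishing of higher terms. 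The central obstacle is (7): Katz's monodromy denseness is the engine that powers the geometric irreducibility required for the Moret-Bailly application in \S\ref{sec:moret-bailly}, and bringing our non-standard $v$ into the scope of his theorem is the main combinatorial effort of this section.
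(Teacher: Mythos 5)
Your proposal is correct and follows essentially the same route as the paper: (1) and (2) by direct geometry and Poincar\'e duality combined with the conjugation of the $\Gamma_W/\Delta$-eigenspaces, (5)--(7) by Katz's results on the local monodromy at $\infty$ and the global monodromy (with the same case split at $n=2$ and the same specialization/unipotence argument for (6), which the paper packages as the argument of Lemma 1.15 of \cite{hsbt}), and (4) via the CM nature of the Fermat fiber. The only minor divergence is in (3), where the paper obtains the rank from Katz's Lemma 3.1 and the unbroken Hodge--Tate string from his Lemma 10.4 (as a consequence of the unipotent single-block monodromy at $\infty$) rather than by a direct Griffiths--Dwork combinatorial check, but this is a variation in citation rather than in substance.
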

\begin{proof} Point (1) is trivial. Point (2) comes from the fact that there is a perfect Poincare duality pairing between $\Prim^{N-2}_{l,[v]}$ (=$\Prim_l$) and $\Prim^{N-2}_{l,[-v]}$ towards $\Qbar_l(2-N)$, and the fact that we can identify $\Prim^{N-2}_{l,[-v]}$ as the complex conjugate of $\Prim^{N-2}_{l,[v]}$. (And then a similar argument for $\Prim[l]_t$.)

It will prove useful to skip over points (3) and (4) and return to them later. To begin our analysis of points (5) and (6), let us note that it suffices, by an argument identical to that used to prove Lemma 1.15 of \cite{hsbt}, to establish that for $\lambda_\fq$ of the form given the monodromy of $\Prim_l$ around infinity is generated by a unipotent matrix with minimal polynomial $(X-1)^n$. Now we will apply Lemma 10.1 of \cite{k}. It is clear from the definition of $v$ we gave that point (4) of the equivalent conditions given in this lemma is true (viz, that the value 0 occurs more than once and no other value does); whence we can deduce the equivalent condition (2), which is the unipotence we need. 

Next, we move to establish point (7). We apply Lemma 10.3 of \cite{k}. When $n>2$, we saw that the $v$ we chose did not have $-v$ a permutation of $v$. Thus we are in case (1) of \cite[Lemma 10.3]{k}, and the geometric monodromy is dense in $\SL_n$ , since we took $N$ (which corresponds to $n$ in Katz's notation) to be odd. On the other hand, if $n=2$ then $v$ is a permutation of $-v$ and again using the fact that $N$ is even we are in case (2) of  \cite[Lemma 10.3]{k}, and the geometric monodromy is dense in $\Sp_2$. But $\Sp_2=\SL_2$, so in this case again the geometric monodromy is dense in $\SL_n$.
This establishes point (7) of the present proposition.

We now move on to establish point (3). First, we will apply Lemma 3.1 of \cite{k}, which gives a recipe for computing the ranks of the eigensheaves of $\Prim_l^{N-2}$, and another recipe for computing the Hodge-Tate numbers. We will apply the recipe for the ranks. We are asked to consider the coset of elements of $(\Z/N\Z)_0^N$ representing $v$, and in particular, those elements of the coset which are \emph{totally nonzero}; that is, contain no 0s. The translate $v-(y,y,\dots,y)$ will be totally nonzero iff $y$ does not occur in $v$; as discussed above, our $v$ omits precisely $n$ congruence classes mod $N$, hence there are $n$ totally nonzero representatives. The rank equals the number of totally nonzero representatives, which will therefore be $n$.

Now, we apply Lemma 10.4 of \cite{k}, which tells us that (when the equivalent conditions of Lemma 10.1 of \cite{k} hold, as they do for us) the Hodge-Tate weights form an unbroken string of ones; that is, the Hodge-Tate numbers are of exactly the form we require, where we define $h(\sigma)$ to be the smallest Hodge-Tate number at the embedding $\sigma$.

We next prove point (4) in the special case where $F=\Q(\mu_N)$. We first observe that the group $\Gamma/\Delta$ (rather than just $\Gamma_W/\Delta$) actually acts on $H(Y_0)$, allowing us to decompose $\Prim_{l,0}$ further into eigensheaves for $\Gamma/\Gamma_W$. Proposition I.7.4 of \cite{dmos} tells us that these eigensheaves are all one dimensional, and since $l$ is chosen to split in $\Q(\mu_N)$, this tells us that $\Prim_{l,0}$ is a direct sum of characters, which are crystalline with Hodge-Tate numbers $\{h,h+1,\dots,h+n-1\}$ by point (3). This establishes the first part (since $l$ splits in $F=\Q(\mu_N)$, there is a unique embedding $\sigma$ corresponding to $\fw$ and a crystalline character of $I_\fw$ is a power of the cyclotomic character, and we can read off which one by examining its Hodge-Tate number at $\sigma$). The second part then follows, since as $l\equiv 1$ mod $N$, we have $l>N>n$ and the characters $\eps_l^{-1},\eps_l^{-2},\dots,\eps_l^{1-n}$ have distinct reductions mod $l$.

Finally, we deduce the general case of point (4). Letting $F$ now be arbitrary, and $\fw|l$ an arbitrary prime of $F$, we let $\fw'$ be the prime of $\Q(\mu_N)$ below $\fw$. Now restricting both sides of $\Prim_{l,0}|_{I_{\fw'}}\cong \eps_l^{-h(\sigma)}\oplus\eps^{-h(\sigma)-1}_l\oplus\dots\oplus\eps_l^{1-h(\sigma)-n}$ to $G_F$ gives the result concerning $\Prim_{l,0}|_{I_{\fw}}$. A similar argument works for $\Prim[l]_{0}|_{I_{\fw}}$.
\end{proof}

Now, we have the following Corollary.
\begin{corollary} \label{monodromy-mod-prop} There is a constant $C(n,N)$ such that if $M$ is an integer divisible only by primes $p>C(n,N)$ and if $t\in T_0^{(M)}$ then the map
$$\pi_1(T_0^{(M)},t) \ra \GL(\Prim[M]_t)$$
surjects onto $\SL(\Prim[M]_t)$. (We may, and shall, then additionally assume that $C(n,N)>n$.)
\end{corollary}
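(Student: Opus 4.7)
The plan is to bootstrap from Proposition \ref{geom-lemma}(\ref{geom-lemma-monod}), which gives $\ell$-adic Zariski density of the monodromy in $\SL_n$. The proof will proceed in three stages: (a) for each sufficiently large prime $\ell$, show the mod-$\ell$ monodromy image equals $\SL_n(\F_\ell)$; (b) lift from $\F_\ell$ to $\Z/\ell^k\Z$; and (c) combine across prime powers dividing $M$ via the Chinese Remainder Theorem.

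Stage (a) is the main point. The $\ell$-adic monodromy image $H_\ell \subseteq \SL_n(\Z_\ell)$ is, by Proposition \ref{geom-lemma}(\ref{geom-lemma-monod}), a compact subgroup that is Zariski dense in $\SL_n$. The task is to show that for all $\ell$ outside a finite set depending only on $n$ and $N$, the reduction $\bar{H}_\ell$ of $H_\ell$ modulo $\ell$ equals the full group $\SL_n(\F_\ell)$. I would apply a Nori-type theorem on integral monodromy groups, or equivalently the Matthews--Vaserstein--Weisfeiler strong-approximation theorem, both of which apply cleanly to the simply connected semisimple group $\SL_n$ and produce an exceptional set bounded in terms of $n$ and $N$. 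Alternatively, one may appeal to the mod-$\ell$ analogue of Lemma 10.3 of \cite{k}, which Katz's analysis of the Dwork family should also supply with an explicit finite list of bad primes. Either route is absorbed into $C(n,N)$.

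Stages (b) and (c) are standard profinite-group theory. For (b), the kernel of $\SL_n(\Z/\ell^k\Z) \onto \SL_n(\F_\ell)$ is a nilpotent $\ell$-group whose successive congruence quotients are isomorphic as $\SL_n(\F_\ell)$-modules under conjugation to the adjoint representation $\mathfrak{sl}_n(\F_\ell)$, which is irreducible for $\ell \nmid n$. Since $\SL_n(\F_\ell)$ is perfect for $\ell \geq 5$, a Frattini-type argument forces any closed subgroup of $\SL_n(\Z/\ell^k\Z)$ surjecting onto $\SL_n(\F_\ell)$ to equal the whole group. For (c), a Goursat argument applied to a closed subgroup of $\prod_{p \mid M} \SL_n(\Z/p^{v_p(M)}\Z)$ surjecting onto each factor shows it is the full product, since the simple composition factors of the different factors involve mutually distinct primes and so admit no nontrivial common quotients.

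The main obstacle is stage (a), the transfer from Zariski density to mod-$\ell$ surjectivity with a uniform bound on the exceptional primes. Abstractly this is handled by Nori--MVW; concretely for the Dwork family the bound should be supplied by explicit monodromy calculations in the vein of \cite{k}. The remaining stages are routine once stage (a) is in hand.
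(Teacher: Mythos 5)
Your overall architecture (handle prime powers first, then combine across primes) matches the paper's, but your stage (a) has a genuine gap: you propose to apply Nori or Matthews--Vaserstein--Weisfeiler to the compact $l$-adic monodromy group $H_l\subseteq\SL_n(\Z_l)$, and neither theorem applies to such a group. MVW \cite[Theorem 7.5]{mvw} requires a \emph{finitely generated} Zariski-dense subgroup of $\SL_n(\Q)$ (a global field), and Nori's theorem concerns subgroups of $\GL_n(\F_p)$ or finitely generated integral groups; a compact Zariski-dense subgroup of $\SL_n(\Z_l)$ need not surject mod $l$ (any principal congruence subgroup is Zariski dense), so Zariski density of each $H_l$ by itself cannot give mod-$l$ surjectivity, let alone with an exceptional set uniform in $l$. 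The missing idea --- and the heart of the paper's proof --- is to replace the $l$-dependent groups $H_l$ by a single $l$-independent object: via the comparison isomorphisms between \'etale and topological fundamental groups and between $l$-adic and Betti cohomology, the geometric monodromy is the image $\Gamma\subseteq\SL_n(\Q)$ of the topological fundamental group of the base (a free group on $N$ generators, the base being the Riemann sphere minus $N+1$ points) acting on $\Prim^{N-2}_{(an)}$ with $\Q$-coefficients. This $\Gamma$ is finitely generated, and it is Zariski dense because its closure contains the $l$-adic monodromy for one prime, which is dense by Proposition \ref{geom-lemma}, part \ref{geom-lemma-monod}. MVW applied to $\Gamma$ then gives $l$-adic density of its image for all $l$ outside a finite set depending only on $\Gamma$, hence only on $(n,N)$; this yields surjectivity onto $\SL_n(\Z/l^a\Z)$ for all $a$ at once, so your separate Frattini lifting step (b), while correct, becomes unnecessary. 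Your fallback suggestion of a ``mod-$l$ analogue of Lemma 10.3 of \cite{k}'' is not available: Katz's results give Zariski density of the $l$-adic geometric monodromy, not mod-$l$ surjectivity with an explicit list of bad primes.

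Stage (c) is essentially the paper's final step (the paper cites \cite[Lemma 8.4]{mvw} rather than running Goursat by hand), but your justification is slightly off: the simple composition factors of the groups $\SL_n(\Z/p^{a}\Z)$ for distinct $p$ are not supported at mutually distinct primes in general --- for $n$ even and $p,q$ odd, $\Z/2\Z$ occurs as a composition factor of both centres. The correct argument is that these groups are perfect for $p>C(n,N)\geq 5$, so a nontrivial common quotient would be perfect and would force a common nonabelian simple composition factor, i.e.\ $\mathrm{PSL}_n(\F_p)\cong\mathrm{PSL}_n(\F_q)$, which is impossible for distinct large $p,q$.
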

\begin{proof}
First note that Lemma \ref{geom-lemma}, part \ref{geom-lemma-monod} tells us that $\pi_1(T_0^{(M)},t) \ra \GL(\Prim_{l,t})$ maps into $\SL(\Prim_{l,t})$, so we certainly have that the map $\pi_1(T_0^{(M)},t) \ra \GL(\Prim[l]_t)$ maps into $\SL(\Prim[l]_t)$ for each prime $l$; and hence the map $\pi_1(T_0^{(M)},t) \ra \GL(\Prim[M]_t)$ factors through $\SL(\Prim[M]_t)$. Thus the key point is that the map into $\SL(\Prim[M]_t)$ is in fact surjective.

We will show first that  this map onto $\SL(\Prim[M]_t)$ is a surjection in the case where $M$ is a prime power, $M=l^a$, for $l$ greater than some bound $C(n,N)$.

To make our argument, we must introduce transcendental versions of some of the objects we have been considering. In particular, let us define $T_{0,\C}=U\times_{\Spec R_0} \Spec \C$ (a scheme over $\Spec \C$), and define $T_0^{(an)}$ to be the topological space associated with this scheme using the standard complex topology. Let us similarly put $Y_\C = Y\times_U T_{0,\C}$ and let $Y^{(an)}$ denote the topological space associated to the scheme $Y_\C$ by the complex topology. Let us write $\Prim_l^{N-2,\C}$ for the base-change of the sheaf $\Prim_l^{N-2,\C}$ to $T_{0,\C}$. Finally, let $\pi^{(an)}:Y^{(an)}\to T_0^{(an)}$ be the natural map, and let us write (for each $i$), $\cF^i_{(an)}=R^i\pi^{(an)}_* \Q$, a transcendentally defined local system with $\Q$ coefficients on $T_0^{(an)}$. Let $\Prim^{N-2}_{(an)}:=\cF_{(an)}^{N-2}$. 

We now apply standard comparison theorems to relate these objects. Firstly, we know that we have an isomorphism between $\pi_1(T_0^{(M)},t)$ and $\pi_1(T_{0,\C},t)$ (the algebraic fundamental group of the scheme $T_{0,\C}$ over $\C$) and between $\pi_1(T_{0,\C},t)$ and $(\pi_{1,\mathrm{top}}(T_{0}^{(an)},t))^\vee$ (the profinite completion of the topological fundamental group of the topological space $T_{0}^{(an)}$). Second, for $t\in T_{0,\C}$ and a prime $l$,  we can identify the fiber  $\Prim_{l,t}^{N-2,\C}$ with $\Prim^{N-2}_{(an),t}\otimes_{\Q}\Q_l$, while for $t$ a geometric point in $ T_{0}^{(M)}$ we can identify $\Prim_{l,t}^{N-2}$ and $\Prim_{l,t}^{N-2,\C}$; so that for $t$ such a geometric point we can identify $\Prim^{N-2}_{(an),t}\otimes_{\Q}\Q_l$ with $\Prim_{l,t}^{N-2,\C}$. In particular, since $\Prim^{N-2}$ had rank $n$, so do $\Prim_{l,t}^{N-2,\C}$ and $\Prim^{N-2}_{(an)}$. And finally, with $t$ still a geometric point in $T_{0}^{(M)}$, these identifications are compatible in the sense that we have a commutative diagram for each prime $l$:
$$\xymatrix{
\pi_{1,\mathrm{top}}(T_0^{(an)})\ar@{^{(}->}[d]\ar[r] 	&
				\GL(\Prim^{N-2}_{(an),t})\ar@{->}[rr]^{\sim}\ar@{^{(}->}[d] & &
								\GL_n(\Q)\ar@{^{(}->}[d]^{i}		\\
(\pi_{1,\mathrm{top}}(T_0^{(an)}))^\vee\ar[d]^{\sim}		&
				\GL(\Prim^{N-2}_{(an),t}\otimes_{\Q}\Q_l)\ar@{->}[rr]^{\sim}\ar[d]^{\sim} &&
								\GL_n(\Q_l)\ar@{=}[d]				\\
\pi_1(T_{0,\C},t)\ar[d]^{\sim}\ar[r]			&
				\GL(\Prim^{N-2,\C}_{l,t})\ar[d]^{\sim}\ar@{->}[rr]^{\sim}		&&
								\GL_n(\Q_l)\ar@{=}[d]	\\
\pi_1(T_0^{(M)},t)\ar[r]			&
				\GL(\Prim^{N-2,\C}_{l,t})\ar@{->}[rr]^{\sim}		&&
								\GL_n(\Q_l)
}$$
On the other hand, we know that the bottom line in fact maps into $\SL_n(\Q_l)$, and this tells us that we can replace $\GL$ with $\SL$ everywhere in the diagram, and we will from now on consider this substitution to have been made. Let us write $\Gamma$ for the image of the map along the top of the diagram in $\SL_n(\Q)$. Since, as a topological space, $T_0^{(an)}$ is a sphere with $N+1$ points removed (the Riemann sphere having removed the point $\infty$ and the $N$th roots of 1), $\pi_{1,(top)}(T_0^{(an)})$ is a free group with $N$ generators, say $\gamma_1,\dots \gamma_{N}$. Thus $\Gamma$ is a finitely generated subgroup of $\SL_n(\Q)$. 

We next claim that $\Gamma$ is Zariski dense in $\SL_n(\Q)$. To see this, let us let $\Gamma'$ denote the image of the map along the bottom of the diagram (a subset of $\SL_n(\Q_l)$). (Given the vertical isomorphisms between the bottom three lines in the diagram, we can equally well think of this as the image of the map along the third line of the diagram or the image of the horizontal map in the second line.) We know that $\Gamma'$ is Zariski dense in $\SL_n(\Q_l)$ by Lemma \ref{geom-lemma}, part \ref{geom-lemma-monod}, and we also see that $\Gamma'$ is contained in the $l$-adic closure of the image of $\Gamma$ under the map $i$. We thus see that the image of $\Gamma$ in $\GL_n(\Q_l)$ is Zariski dense. It follows that $\Gamma$ is Zariski dense in $\GL_n(\Q)$.

Then \cite[Theorem 7.5]{mvw} tells us that if we are given:
\begin{itemize}
\item $G$, a connected, absolutely simple algebraic group of adjoint type over $\Q$,
\item $\tilde{G}$ its simply connected cover (which will be an `almost simple' algebraic group---that is, one with no connected normal subgroups), and
\item $\Gamma$, a finitely generated Zariski dense subgroup of $\tilde{G}(\Q)$,
\end{itemize}
then the image of $\Gamma$ in $\tilde{G}(\Q_l)$ is $l$-adically dense for all but finitely many $l$. Applying this with $G$ the group $\PGL$ (and hence $\tilde{G}$ being $\SL$), we see that for almost all $l$, $\Gamma$, and hence the image of $\pi_{1,\mathrm{top}}$, is $l$-adically dense in $\SL_n(\Q_l)$. In particular, choosing $C(n,N)$ to be the largest $l$ for which the image is not $l$-adically dense, we have that for $M=l^a$ with $l>C(n,N)$, the map $\pi_{1,\mathrm{top}}\ra \SL(\Prim[M]_t)$ is surjective, and hence certainly $\pi_1(T_0^{(M)},t) \ra \SL(\Prim[M]_t)$ is surjective.

\smallskip

Thus we have seen that the map $\pi_1(T_0^{(M)},t) \ra \SL(\Prim[M]_t)$ is surjective where $M=l^a$ is a prime power and $l>C(n,N)$. We shall now show that that the map $\pi_1(T_0^{(M)},t) \ra \SL(\Prim[M]_t)$ remains surjective for $M=p_1^{a_1}\dots p_r^{a_r}>C(n,N)$ an arbitrary positive integer, which will complete the proof of the corollary. Note that we can certainly see that in this case each $p_i>C(n,N)$.

Let us write $\phi$ for the map $\pi_1(T_0^{(M)},t) \ra \SL(\Prim[M]_t) \isoto \prod_i  \SL(\Prim[p_i^{a_i}]_t)\isoto \prod_i\SL_n(\Z/p_i^{a_i}\Z)$; we wish to show this map is surjective. Let $\Gamma = \phi(\pi_1(T_0^{(M)},t)) \subset \prod_i\SL_n(\Z/p_i^{a_i}\Z)$. Now, \cite[Lemma 8.4]{mvw} tells us that if we are given:
\begin{itemize}
\item $G$, a connected, absolutely simple algebraic group of adjoint type over $\Q$,
\item $\tilde{G}$ its simply connected cover (which will be an `almost simple' algebraic group---it has no connected normal subgroups), 
\item $p_1,\dots,p_r$ a collection of rational primes, 
\item $a_1,\dots, a_r$ a collection of non-negative integers, and 
\item $\Gamma$ a subgroup of $\prod_i \tilde{G}(\Z/p_i^{a_i}\Z)$
\end{itemize}
then \emph{if} the projection of $\Gamma$ onto $\tilde{G}(\Z/p_i^{a_i}\Z)$ for each $i$ is all of $\tilde{G}(\Z/p_i^{a_i}\Z)$, \emph{then} in fact $\Gamma=\prod_i \tilde{G}(\Z/p_i^{a_i}\Z)$. We can apply this lemma, with $G=\PGL_n$ (and hence $\tilde{G}=\SL_n$), and with the $p_i$, the $a_i$, and $\Gamma$ as in their present contexts, to see that \emph{if} the projection of $\Gamma$ onto $\SL_n(\Z/p_i^{a_i}\Z)$ is all of $\SL_n(\Z/p_i^{a_i}\Z)$ for each $i$, \emph{then} we will in fact have $\Gamma = \prod_i\SL_n(\Z/p_i^{a_i}\Z)$, telling us that $\phi$ is surjective, as required. Hence it suffices to show that the projection of $\Gamma$ onto $\SL_n(\Z/p_i^{a_i}\Z)$ is all of $\SL_n(\Z/p_i^{a_i}\Z)$ for each $i$. But this projection is the image of the natural map $\pi_1(T_0^{(M)},t) \isoto\pi_1(T_0^{(l)},t) \ra \SL(\Prim[l]_t) \isoto \SL_n(\Z/p_i^{a_i}\Z)$, and we saw above that $\pi_1(T_0^{(l)},t) \ra \SL(\Prim[l]_t)$ is surjective. (This uses the fact that each $p_i>C(n,N)$).
\end{proof}

\section{Realizing residual representations}\label{sec:moret-bailly}

In this section, our ultimate aim is to use the theorem of Moret-Bailly to prove a result allowing us to realize (restrictions of) prescribed residual representations as the cohomology of varieties in the Dwork family. Before we do so, we must prove certain technical results which will be necessary to us in this goal.

\subsection{Hypergeometric sheaves}
The first of these technical results which we will are certain calculations concerning the determinant $\det\Prim_l$, but before we can make these calculations, we must review some material concerning certain `hypergeometric' sheaves studied by Katz, since our main tool in studying the determinant will be results of Katz which relate it to those hypergeometric sheaves. We will carry out this review in this section, which may therefore be skimmed or skipped entirely by readers already familiar with this material.

There are in fact two distinct kinds of hypergeometric sheaf which will be of importance to us in the sequel. On the one hand, and of primary importance, there are the \emph{canonical hypergeometric sheaves}, which are \'etale  sheaves on scheme $\G_m-\{1\}$ over $R_0$; in particular, they should be thought of as \emph{global} objects (since $\Frac(R_0)$ is a number field). On the other, there is another kind of hypergeometric sheaf which we will call \emph{traditional hypergeometric sheaves}. (These are the original hypergeometric sheaves studied by Katz in, say, \cite{k-book}. There, they are simply called `hypergeometric sheaves', since the canonical variant had not been invented at the time---but for us it will prove useful to attach the word `traditional' to them, to distinguish them from the canonical hypergeometric sheaves which will be of greater importance here.) We will discuss each in turn.

\subsubsection{Canonical hypergeometric sheaves.}
Let us write $B$ for the scheme $\G_m-\{1\}$ over $\Spec R_0$, fix a rational prime $l$, and suppose we are given multisets\footnote{A \emph{multiset} is a notion similar to a set, except we keep track of multiplicity of membership. For instance, $\{1,2,2,3\}$ and $\{1,2,3\}$ are the same as sets, but would be considered different as multisets. (We will write multisets using the same `$\{\}$' notation used for sets, but it should always be clear from the context when we mean for this notation to denote a multiset and when a set.)  Formally, a multiset can be thought of as a function from a set to the positive integers, where the positive integer associated to an element is its multiplicity. For full details, see \cite[pp1026--1039]{gs}.} $S_\chi$ and $S_\rho$ of characters $\mu_N\ra\mu_N$, each of size $k$. (A quick point of convention: although we have been writing such characters `additively' up until now, as elements of $\Z/N\Z$, it will be convenient in this section to switch to multiplicative notation to match the notation used by Katz. Thus, for instance, 1 will denote the trivial character.) 

Given this data, we will define (following Katz\footnote{Note that the construction we are about to present is the construction of the global object $\Hcan$ defined on \cite[p11]{k}, and \emph{not} the local object, also called $\Hcan$, defined on \cite[p11]{k}, about which we will have more to say presently.}) an object called $\Hcan(S_\chi,S_\rho)$, which is a rank $k$ sheaf on $B$ with $\Q_l$ coefficients. The definition proceeds in three stages:
\begin{definition}\label{def:can-hyp}
\begin{enumerate}
\item Suppose that $\chi,\rho$ are characters $\mu_N\ra\mu_N$. If $\mathcal{P}$ is a maximal ideal of $R_0$, then the finite field $\bfk=R_0/\mathcal{P}$ has all $N$th roots of unity, and we can view $\chi$ and $\rho$ as $\mu_N(R_0)$-valued characters of $\bfk^\times$ by composing with the surjective map $p:\bfk^\times\ra\mu_N(\bfk)\isoto\mu_N(R_0)$ obtained by raising to the $\#\bfk^\times/N$th power. Then, by \cite{we-js}, attaching to each maximal ideal $\mathcal{P}$ of $R_0$ the (negative) Jacobi sum $-\sum_{x\in (R_0/\mathcal{P})^\times} (\chi\circ p)(x)((\rho/\chi)\circ p)(1-x)$ defines a grossencharacter, and hence by \cite[Chapter 2]{serre-alr} a $\overline{\Q}_l$-valued character, of $\pi_1(\Spec(R_0[1/l]))$. We will write $\Lambda_{\chi,\rho/\chi}$ for this character. (Note that the interpretation of $\Lambda_{\chi,\rho/\chi}$ as a $\Q_l$ valued character depends on the choice of an embedding $R_0\into \Q_l$, and we will find it convenient to make the same choice as was used to label the pieces of the cohomology of the Dwork family in \S2.) By composing with the natural map $\pi_1(B)\rightarrow\pi_1(\Spec(R_0[1/l]))$, we can also consider $\Lambda_{\chi,\rho/\chi}$ to be a character $\pi_1(B)\rightarrow\overline{Q}_l^\times$.
\item Suppose that $S_\chi$ and $S_\rho$ are both singleton multisets, so that $S_\chi=\{\chi\}$ and $S_\rho=\{\rho\}$ say, and we have a character $\Lambda_{\chi,\rho/\chi}$ as in the previous part attached to $\chi$ and $\rho$. We can also form the Kummer\footnote{The \emph{Kummer sheaf} is defined as follows. Let $[N]$ denote the $N$th power map. This exhibits $B$ as a finite \'etale covering of itself, with fiber $\mu_N(R_0)$, and moreover we see that the action of $\pi(B)$ on the fiber $\mu_N(R_0)$ has the following property: each element of $\gamma\in\pi(B)$ acts by multiplication by some $x(\gamma)\in\mu_N(R_0)^\times$. The map $x:\gamma\mapsto x(\gamma)$ is clearly a homomorphism, and given a character $\chi:\mu_N(R_0)\to\mu_N(\Q_l)$, $\chi\circ x$ determines a homomorphism $\pi_1(B)\to\Q_l^\times$, and hence a lisse sheaf $\cL_{\chi(x)}$ on $B$; this is the Kummer sheaf. We can also view it as a sheaf on $\A^1$ by extension by zero. The related sheaf $\cL_{\chi(1-x)}$ is the pullback of $\cL_{\chi(x)}$ on $\A^1$ along $x\mapsto 1-x$.} sheaves $\cL_{\chi(x)}$ and $\cL_{(\rho/\chi)(1-x)}$ on $B$. We then define $\Hcan(\{\chi\},\{\rho\})$ by putting
$$\Hcan(\{\chi\},\{\rho\}) = \cL_{\chi(x)} \otimes \cL_{(\rho/\chi)(1-x)} \otimes (1/\Lambda_{\chi,\rho/\chi}).$$
\item Now suppose $S_\chi$ and $S_\rho$ are general multisets of characters $\mu_N\ra\mu_N$, each of size $k$. Let us put $S_\chi=\{\chi_1,\dots,\chi_k\}$ and $S_\rho=\{\rho_1,\dots,\rho_k\}$. We can form the sheaves $\Hcan(\{\chi_1\},\{\rho_1\})$, $\Hcan(\{\chi_2\},\{\rho_2\})$ and so on according to the definition in the previous part. These sheaves then give rise to elements of the derived category of sheaves on $B$, which we will also refer to (by abuse of notation) as $\Hcan(\{\chi_1\},\{\rho_1\})$, $\Hcan(\{\chi_2\},\{\rho_2\})$ and so on. We can then apply the `shift' operator [1] to these elements of the derived category, and can finally form the $!$ multiplicative convolution:
$$\Hcan(\{\chi_1\},\{\rho_1\})[1] \star_! \Hcan(\{\chi_2\},\{\rho_2\})[1] \star_!\dots \star_! \Hcan(\{\chi_k\},\{\rho_k\})[1].$$
(See, for instance, \cite[\S8.1.8]{k-book} for the definition of convolution of objects of the derived category of sheaves.) It is then the case that this element of the derived category is in fact of the form $\cF[1]$ for some sheaf\footnote{This sheaf is not to be confused with the sheaf $\cF^i_l$ above.} $\cF$, and $\cF$ does not in fact depend (up to isomorphism) on the ordering on the imposed on the $\chi_i$ and $\rho_i$ and used to define the multiplicative convolution above (see \cite[p11, last paragraph]{k} for both these assertions).

We then define $\Hcan(S_\chi,S_\rho)$ to be this sheaf $\cF$, and call $\Hcan$ a \emph{canonical hypergeometric sheaf}.
\end{enumerate}
\end{definition}

The reason these sheaves are of interest to us is a certain result of Katz relating them to the cohomology sheaves $\Prim_l^{n-2}$ which we have been studying. Before we can state this result we need a definition
\begin{definition} Given two multisets $A$, $B$, which may possibly have some elements in common, write $\mathbf{Cancel}(A,B)$ for the pair of multisets $(A',B')$ defined uniquely by the following properties:
\begin{itemize}
\item The elements of $A'$ are precisely those elements of $A$ which either do not occur in $B$, or which occur with greater multiplicity in $A$ than they do in $B$. The multiplicity by which such an $x$ occurs in $A'$ is 
$$(\text{multiplicity with which $x$ occurs in $A$})-(\text{multiplicity with which $x$ occurs in $B$})$$
where the second term is taken to be 0 if $x$ is not in $B$.
\item The same holds where we replace every $A$ with $B$ and $A'$ with $B'$ and vice virca.
\end{itemize}
\end{definition}

The following is the main Theorem of \cite{k}.
\begin{theorem}[Katz] Denote by $j_1:T^{(l)}_0\into \A^1/\Spec R_0[1/l]$ and $j_2:B\into\A^1/\Spec R_0[1/l]$ be the natural inclusions, and let $[N]:B\ra B$ denote the $N$th power map. Recall that we introduced $v$, an element of $(\Z/N\Z)^N$ which we thought of as an $N$-tuple of characters $\mu_N\to\mu_N$ in \S2.1 above. Let $S_\rho(-v)$ denote the multiset given by taking the $N$ characters occurring in the tuple we get by negating $v$, disregarding their order. Let $S_\chi(v)$ denote the set of all $N$ characters $\mu_N\ra\mu_N$.

Then there is a continuous character $\Lambda_v:\pi_1(\Spec(R_0[1/l]))\to \overline{Q}_l^\times$ and an isomorphism of sheaves on $\A^1/\Spec R_0[1/l]:$
$$j_{1,*} \Prim^{n-2}_l \cong j_{2,*} [N]^* \Hcan(\mathbf{Cancel}(S_\chi(v),S_\rho(-v)))\otimes \Lambda_v.$$
\end{theorem}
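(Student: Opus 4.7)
The statement is presented as the main theorem of \cite{k}, so the substantive plan is to verify that the setup here lines up with Katz's framework and then to invoke his result; essentially no new geometry is required, only bookkeeping. First I would confirm that our $\Gamma_W/\Delta$-eigenspace decomposition of $\Prim^{N-2}_l$ and its labeling by cosets in $(\Z/N\Z)^N_0 / \langle W \rangle$ agree with Katz's decomposition, so that $\Prim^{N-2}_{l,[v]}$ is one of the pieces he treats. This is essentially automatic since both decompositions arise from the same geometric group action on the same variety, but one has to be careful that the choice of embedding $\iota : R_0 \hookrightarrow \Q_l$ (used to identify characters of $\mu_N$ valued in $R_0$ with those valued in $\Q_l$) is used consistently on both sides.

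Next I would translate the input data for $\Hcan$. Katz's recipe assigns to the $[v]$-eigenspace a canonical hypergeometric sheaf built from the multiset pair $\mathbf{Cancel}(S_\chi(v), S_\rho(-v))$, where $S_\chi(v)$ is the multiset of all $N$ characters of $\mu_N$ and $S_\rho(-v)$ is the multiset of entries of $-v$. The rank of this hypergeometric sheaf equals the number of characters surviving cancellation, i.e.\ the number of residues mod $N$ missing from $v$; by the calculation already used in the proof of Proposition \ref{geom-lemma}, part \ref{geom-lemma-ht}, this number is $n$, which matches the rank of $\Prim^{N-2}_{l,[v]}$ and provides a consistency check.

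The geometric content of Katz's theorem produces the hypergeometric sheaf in a variable related to $\lambda$ by $t = \lambda^N$: this $N$th power reflects the fact that rescaling the $X_i$ by a common $N$th root of unity acts trivially on the Dwork family, while rescaling $\lambda$ by an $N$th root of unity does not, so the `honest' parameter on the moduli of Dwork hypersurfaces modulo the diagonal action is $\lambda^N$. Pulling back along $[N]$ transports the hypergeometric sheaf from the $t$-line back to the $\lambda$-line, while the push-forwards $j_{1,*}$ and $j_{2,*}$ merely place both sides on the common base $\A^1/\Spec R_0[1/l]$ so that the isomorphism statement is literally an isomorphism of sheaves on the same scheme. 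Finally, the character $\Lambda_v$ absorbs the discrepancy between Katz's normalization of $\Hcan$ (involving the Jacobi-sum characters $\Lambda_{\chi,\rho/\chi}$ of Definition \ref{def:can-hyp}(1)) and the raw cohomology sheaf $\Prim^{N-2}_{l,[v]}$; since once the pullback $[N]^*$ is taken both sides are lisse of the same rank with the same geometric monodromy, their ratio is pulled back from $\Spec R_0[1/l]$ and is therefore a Grossencharacter.

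The main obstacle is, in principle, nothing — this is Katz's theorem, and the work is pure translation. In practice the only care required is to keep straight (a) the labeling convention for eigenspaces, which depends on the embedding $\iota$; (b) the ordering/cancellation conventions for the multisets $S_\chi(v)$ and $S_\rho(-v)$, noting that $\Hcan$ is independent of the ordering by the last paragraph of \cite[p11]{k}; and (c) the distinction between the global $\Hcan$ (a sheaf on $B$), which is what appears here, and the local $\Hcan$ (a representation of an inertia group) which plays no role in the present statement. Once the dictionary is fixed, the isomorphism together with the existence of $\Lambda_v$ is simply a restatement of Katz's main theorem.
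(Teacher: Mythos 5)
Your proposal is correct and follows essentially the same route as the paper: the result is deduced by citing Katz's Theorem 5.3 and then carrying out the purely notational translation (matching $v$ with Katz's $V$, $W=(1,\dots,1)$, $d=d_W=N$, and identifying Katz's multisets $\List(-v,W)$ and $\List(\text{all }d)$ with $S_\rho(-v)$ and $S_\chi(v)$, so that his $\cH_{V,W}$ is exactly $\Hcan(\mathbf{Cancel}(S_\chi(v),S_\rho(-v)))$), with $\Lambda_v$ taken to be Katz's $\Lambda_{V,W}$. The only difference is that the paper spells out this dictionary explicitly, whereas you describe it at the level of a plan, but the substance is the same.
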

\begin{proof} This is essentially \cite[Theorem 5.3]{k}, although we must do a little work to unravel the notation. Specifically, \cite[Theorem 5.3]{k} tells us that we can find a $\Lambda_{V,W}$ satisfying
$$j_{1,*} \Prim^{n-2}_l \cong j_{2,*} [d]^* \cH_{V,W}\otimes \Lambda_{V,W}$$
where $\cH_{V,W}$ is a certain sheaf which Katz has introduced earlier in the discussion and Katz's $d$ is the same as our $N$ (we will spell this out a little more below). Thus we see that we will be done (taking our $\Lambda_v$ as Katz's $\Lambda_{V,W}$) if we can unravel the definition of $\cH_{V,W}$ and see that it is in fact simply $\Hcan(S_\chi(v),S_\rho(-v))$.

The definition of $\cH_{V,W}$ occurs at the beginning of \cite[\S5]{k}. Recall that, in comparing notation with Katz, we should be aware that our $v$ is his $V$, that $W\in \Z^N$ for us is a vector of $N$ 1s, that Katz's $d$ (the sum of all the elements of $W$) is just $N$, and that Katz's $d_W$ (defined as the $d$ divided by the lcm of the elements of $W$) is for us $N$ also. In order to define $\cH_{V,W}$, Katz introduces characters $\rho_{v_1},\dots, \rho_{v_N}:\mu_N\to\mu_N$, where $\rho_{v_i}$ is defined to be raising to the $(v_i/d)d_W$'th power. Since for us $d_W=d$, $\rho_{v_i}$ is just the character of raising to the $v_i$th power; and thus it is precisely $v_i$ thought of as a character $\mu_N\to\mu_N$. Katz then introduces a multiset $\List(-v,W)$, which is by definition the multiset we get by taking all $w_i$th roots of $\rho_{(-v)_i}$ for each $i$, and taking the union-with-multiplicity of all these multisets. Since for us every $w_i$ is 1, the collection of all $w_i$th roots of $\rho_{-v_i}$ is just $\{\rho_{(-v)_i}\}$, and taking the union of these we get the multiset $\{\rho_{(-v)_1},\dots,\rho_{(-v)_N}\}$, so since we have seen $\rho_{v_i}$ is just $v_i$ thought of as a character, we see $\List(-v,W)$ is the multiset $\{(-v)_1,\dots,(-v)_N\}$; that is, it is $S_\rho(-v)$ as defined above. Katz also introduces a multiset $\List(\text{all $d$})$, which is by definition the multiset of all characters of order dividing $d$. Since $d=N$ for us, this is just our $S_\chi(v)$. Then, by definition, $\cH_{V,W}=\Hcan(\mathbf{Cancel}(\List(\text{all $d$}),\List(-v,W)))$, but this $=\Hcan(\mathbf{Cancel}(S_\chi(v),S_\rho(-v)))$.
\end{proof}

\begin{corollary} \label{cor: relate prim and hcan}
With the notation as in the theorem, in fact we have 
$$j_{1,*} \Prim^{n-2}_l \cong j_{2,*} [N]^* \Hcan(S'_\chi,S'_\rho))\otimes \Lambda_v$$
where $S'_\chi$ is some multiset of $n$ characters $\mu_N\to\mu_N$, while $S'_\rho$ consists of $n$ copies of the trivial (identically 1) character $\mu_N\to\mu_N$. 
\end{corollary}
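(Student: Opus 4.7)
The plan is to obtain the corollary from the preceding theorem of Katz by a direct combinatorial computation of the pair $\mathbf{Cancel}(S_\chi(v), S_\rho(-v))$. Once we know that this pair has the form $(S'_\chi, S'_\rho)$ claimed in the corollary, Katz's theorem yields the desired isomorphism immediately.

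First I would recall, from the explicit description of $v$ in \S 2.1, that the multiset $\{v_1,\dots,v_N\}$ consists of the trivial character (i.e.\ $0\in\Z/N\Z$) with multiplicity $n+1$, together with $N-n-1$ distinct non-trivial characters (this uses that $n+1\geq 3$ in the edge case $n=2$, and needs $N\geq n+5$ for the listed entries to be distinct in the case $n>2$). Consequently the same holds for the multiset $\{-v_1,\dots,-v_N\}=S_\rho(-v)$: it contains $0$ with multiplicity $n+1$ and exactly $N-n-1$ distinct non-trivial characters.

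Next I would run the cancellation. By definition, $S_\chi(v)$ contains every character of $\mu_N$ exactly once. Therefore each of the $N-n-1$ distinct non-trivial characters appearing in $S_\rho(-v)$ cancels completely against its (unique) counterpart in $S_\chi(v)$, contributing nothing to $S'_\chi$ or $S'_\rho$. The single copy of the trivial character in $S_\chi(v)$ is absorbed by one of the $n+1$ trivial entries of $S_\rho(-v)$. What remains, after cancellation, is: on the $S'_\rho$-side, exactly $n$ copies of the trivial character; on the $S'_\chi$-side, exactly those $n$ non-trivial characters of $\mu_N$ which do not appear among the entries of $-v$. Thus $(S'_\chi, S'_\rho)=\mathbf{Cancel}(S_\chi(v), S_\rho(-v))$ has precisely the shape asserted in the corollary, and substituting into the isomorphism supplied by the theorem completes the proof.

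I do not expect any real obstacle here: the entire argument is a bookkeeping exercise based on the combinatorics of $v$ already recorded in \S 2.1, and no further geometric input is required beyond what the theorem provides.
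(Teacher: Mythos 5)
Your proposal is correct and follows essentially the same route as the paper: read off from the definition of $v$ in \S 2.1 that it consists of the trivial character with multiplicity $n+1$ plus $N-n-1$ distinct nontrivial characters, and then compute $\mathbf{Cancel}(S_\chi(v),S_\rho(-v))$ directly to get $n$ leftover nontrivial characters on the $\chi$-side and $n$ copies of the trivial character on the $\rho$-side. If anything, your bookkeeping is slightly more precise than the paper's (which parenthetically describes $S'_\chi$ as the characters omitted by $v$ rather than by $-v$), but since the corollary only asserts that $S'_\chi$ is \emph{some} multiset of $n$ characters, this makes no difference.
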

\begin{proof}
Examining the choice of $v$ made before Theorem \ref{geom-prop} (and remembering that although at that point we were writing 0 for the trivial character, here we will write 1 since we have switched to multiplicative notation for this section), we see that the tuple $v$ was defined to contain all the characters $\mu_N\to\mu_N$, \emph{except} with some $n$ characters omitted, and $n$ extra copies of the trivial character added to pad the list to length $N$. It is then trivial to calculate that $\mathbf{Cancel}(S_\chi,S_\rho)=(S'_\chi,S'_\rho)$, where $S'_\chi$ is a multiset of $n$ characters (specifically, the ones which $v$ omits) and $S'_\rho$ consists of $n$ copies of the trivial character. This is as required.
\end{proof}

\subsubsection{Traditional hypergeometric sheaves}
We now turn to the traditional hypergeometric sheaves originally studied by Katz. As was mentioned above, these are naturally `local' objects, by contrast to the `global' objects considered in the previous section: for each finite field $\bfk$ which is an $R_0$ algebra, we will define traditional hypergeometric sheaves as sheaves on $B\times_{\Spec R_0} \Spec_\bfk$, where we continue to write $B$ for the scheme $\G_m-\{1\}$ over $\Spec R_0$. To save space, we will abbreviate $B\times_{\Spec R_0} \Spec_\bfk$ as $B_\bfk$.

While it is possible to give a constructive definition of these sheaves akin to the definition of canonical sheaves given in the previous section, the construction is lengthy and unnecessary to us here, so we will give a nonconstructive definition, singling out the required sheaves by giving their trace function and referring to elsewhere for the proof that there \emph{is} a sheaf with this trace function. (The proof we refer to is essentially the construction already mentioned.)
\begin{theorem}[Katz] \label{thm: trad hyp sheaves exist}
Suppose $\bfk$ is a finite field which is an $R_0$ algebra, write $B_\bfk$ for $\G_m-\{1\}/\bfk$, let $\psi:(\bfk,+)\ra\overline{Q}_l^\times$ be a nontrivial additive character, and suppose that we are given multisets $S_\chi$ and $S_\rho$ of characters $\mu_N\ra\mu_N$, each of size $k$. As in Definition \ref{def:can-hyp}, part (1), we will find useful the map $p:\bfk^\times\ra\mu_N(k)\isoto\mu_N(R_0)\isoto \mu_N(\Q_l)$ obtained by first raising to the $\#\bfk^\times/N$th power, then identifying $\mu_N(k)$ and $\mu_N(R_0)$ using the fact $k$ is an $R_0$ algebra, and finally embedding $R_0\into\Q_l$ using the embedding chosen in \S2.)

For $E/\bfk$ a finite extension, write $B_E$ for $V\times_{\Spec \bfk}\Spec E$, and $\Tr_{E/\bfk}$ and $N_{E/\bfk}$ for the norm and trace maps. Let $\psi_E$ denote $\psi\circ\Tr_{E/\bfk}$, $S_{\chi,E}$ denote the multiset $\{\chi\circ N_{E/\bfk}|\chi\in S_{\chi}\}$ and  $S_{\rho,E}$ denote the multiset $\{\rho\circ N_{E/\bfk}|\rho\in S_{\chi}\}$. For $t\in B_E$, let $V(t)$ denote the variety in $(\G_m)^{2k}/E$ with coordinates $(x_1,\dots, x_k, y_1,\dots, y_k)$ cut out by the equation  $\prod_i x_i = t\prod_i y_i$.

Then there exists a unique sheaf $\cH(\psi;S_\chi,S_\rho)$ on $B_\bfk$ such that the trace of Frobenius at $t$ on $\cH(\psi;S_\chi,S_\rho)$ is:
$$(-1)^{2k-1} \sum_{V(t)(E)} \psi_E(\sum_i x_i - \sum_i y_i) \prod_{\chi_E\in S_{\chi,E}} \chi_E(x_i) \prod_{\rho_E\in S_{\rho,E}} \overline{\rho}_E(x_i).$$
\end{theorem}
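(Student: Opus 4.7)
The plan is to construct $\cH(\psi;S_\chi,S_\rho)$ as an iterated $!$-multiplicative convolution of Artin--Schreier--Kummer sheaves on $\G_m$, then to compute its trace function via the Grothendieck--Lefschetz trace formula. Uniqueness will come from Chebotarev density.

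First, for each character $\chi:\mu_N\to\mu_N$, recall the standard rank-one lisse sheaves: the Artin--Schreier sheaf $\cL_\psi$ on $\A^1_\bfk$, with trace function $x\mapsto\psi_E(x)$ at $E$-points, and the Kummer sheaf $\cL_\chi$ on $\G_{m,\bfk}$, with trace function $x\mapsto(\chi\circ p\circ N_{E/\bfk})(x)$. Writing $j:\G_m\into\A^1$ for the inclusion, form for each $\chi\in S_\chi$ the object $\cA_\chi:=j_!(\cL_{\psi(x)}\otimes\cL_{\chi(x)})[1]$ in the derived category of $\A^1_\bfk$, and for each $\rho\in S_\rho$ the object $\cB_\rho:=j_!(\cL_{\psi^{-1}(y)}\otimes\cL_{\bar\rho(y)})[1]$. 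These are the basic building blocks from which all hypergeometric sheaves are assembled in Katz's construction.

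Second, form the iterated $!$-multiplicative convolution
\[ \mathcal{K}\;:=\;\cA_{\chi_1}\star_!\cdots\star_!\cA_{\chi_k}\star_!\cB_{\rho_1}\star_!\cdots\star_!\cB_{\rho_k} \]
on $\G_m$. Since $\star_!$ is by definition $Rm_!$ along the multiplication map, unwinding the iterated convolution shows that $\mathcal{K}$ is computed as $Rm_!$ of the external tensor product of the $\cA_{\chi_i}$ and $\cB_{\rho_i}$ along $m:(\G_m)^{2k}\to\G_m$, $(x_1,\ldots,x_k,y_1,\ldots,y_k)\mapsto(\prod_i x_i)/(\prod_i y_i)$. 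The foundational results on hypergeometric sheaves (see Katz, \emph{Exponential Sums and Differential Equations}, Chapter 8) show that when restricted to $B_\bfk$ this complex is concentrated in a single cohomological degree, and so defines a lisse sheaf, which we take to be $\cH(\psi;S_\chi,S_\rho)$.

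Third, compute the trace function. By Grothendieck--Lefschetz applied to $Rm_!$, for $t\in B_\bfk(E)$ the trace of Frobenius on the stalk is, up to sign, the sum over $(x_1,\ldots,x_k,y_1,\ldots,y_k)\in(E^\times)^{2k}$ with $\prod_i x_i/\prod_i y_i=t$ of the product of the stalkwise trace functions of the constituent sheaves. The latter product is precisely $\psi_E(\sum_i x_i-\sum_i y_i)\prod_i\chi_{i,E}(x_i)\prod_i\overline{\rho}_{i,E}(y_i)$, and the constraint on $(x_i,y_i)$ cuts out $V(t)(E)$. The overall sign $(-1)^{2k-1}$ comes from the $2k$ shifts by $[1]$ inside the convolution together with the one overall shift needed to extract a sheaf from the degree-one complex $\mathcal{K}$. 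Finally, uniqueness of a lisse sheaf on $B_\bfk$ (geometrically connected) with a prescribed trace function follows up to semisimplification from Chebotarev density in the function-field setting; the construction above gives the natural geometrically irreducible representative under the running genericity hypotheses.

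The main obstacle is the middle step: showing that the iterated $!$-convolution is concentrated in a single cohomological degree on $B_\bfk$ (and is lisse there), so that what one gets is honestly a sheaf rather than a genuine complex, and keeping the sign and shift conventions consistent throughout. Both are classical but delicate, and are precisely what is proved in Katz's monograph; once granted, the trace computation above is a direct application of proper base change and the Lefschetz trace formula applied one convolution factor at a time.
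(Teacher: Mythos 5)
Your proposal is correct and follows essentially the same route as the paper: the paper's proof simply cites Katz's construction of $\mathrm{Hyp}(!,\psi,S_\chi,S_\rho)$ as an iterated $!$-convolution (\cite[\S8.2.2]{k-book}), the fact that it is a single sheaf shifted by $[1]$ (\cite[\S8.4.1]{k-book}), and the trace computation (\cite[\S8.2.7]{k-book}), with uniqueness by Chebotarev—exactly the three ingredients you spell out, deferring the same key concentration/lisseness point to Katz's monograph.
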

\begin{proof} That the sheaf is uniquely determined by the traces of Frobenii is a consequence of the Chebotarev density theorem, so the real content of the theorem is that there exists such a sheaf.  To see this, we can appeal to the construction of such a sheaf in \cite{k-book}. Specifically, in \cite[\S8.2.2]{k-book} an element $\mathrm{Hyp}(!,\psi,S_\chi,S_\rho)$ in the derived category of sheaves is introduced, and in \cite[\S8.4.1]{k-book} various properties of this element are developed until one can see it is of the form $\aleph(!,\psi,S_\chi,S_\rho)[1]$ for some sheaf $\aleph(!,\psi,S_\chi,S_\rho)$ (here [1] denotes the derived-category shift operator). The trace function of $\aleph(!,\psi,S_\chi,S_\rho)$ will then be the same as that of $\mathrm{Hyp}(!,\psi,S_\chi,S_\rho)$, which in \cite[\S8.2.7]{k-book} was seen to be exactly the trace function that we ask $\cH(\psi;S_\chi,S_\rho)$ to have. So we can take  $\cH(\psi;S_\chi,S_\rho) := \aleph(!,\psi,S_\chi,S_\rho)$.

(Also see \cite[\S 5, \P 1--2]{k}, where there is a clear statement of the existence of a certain sheaf whose trace function is as above, but no proof.) 
\end{proof}

\begin{definition} \label{thm: trad hyp sheaves defn} We call the sheaf $\cH(\psi;S_\chi,S_\rho)$ of Theorem \ref{thm: trad hyp sheaves exist} a \emph{traditional hypergeometric sheaf}, and we will continue to write $\cH(\psi;S_\chi,S_\rho)$ for it.
\end{definition}

We will need a more results of Katz giving certain properties of these traditional hypergeometric sheaves which will prove important to us. It will be convenient to write $\F$ for the unique finite field of prime order contained in $\bfk$
\begin{proposition}[Katz] \label{Htrad props}
\begin{enumerate} \item $\cH(\psi;S_\chi,S_\rho)$ is pure of weight $2k-1$ and lisse on $B_\bfk-\{1\}$; the monodromy of $\cH(\psi;S_\chi,S_\rho)$ around 1 is a tame pseudoreflection. 
\item Let $\pi:B\times_{\Spec R_0}\Spec R_0[1/l] \ra B_\bfk$ be the natural map (recall that by hypothesis, $\bfk$ is an $R_0$ algebra). Then 
$$\pi^*(\Hcan(S_\chi,S_\rho)) = \cH(\psi;S_\chi,S_\rho) \otimes (1/\phi)$$
where $\phi$ is the unique character of the Galois group of $\bfk$ sending a Frobenius to the following product of Gauss sums
$$\left(\prod_{\chi\in S_\chi} (-g(\psi,\chi))\prod_{\rho\in S_\rho} (-g(\overline{\psi},\overline{\rho}))\right)^{[\bfk:\F]}.$$
\end{enumerate}
\end{proposition}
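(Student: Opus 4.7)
The plan for part (1) is to invoke the structural results Katz develops in \cite[\S 8]{k-book}. There, the sheaf $\aleph(!,\psi,S_\chi,S_\rho) = \cH(\psi;S_\chi,S_\rho)$ is built by iterated $!$-convolution of rank-one sheaves (Kummer sheaves and an Artin--Schreier sheaf) on $\G_m$, each of which is pure of weight one and lisse away from $\{1\}$. Purity of weight $2k-1$ then follows from Deligne's theorem that $!$-convolution of pure objects on $\G_m$ preserves purity, with weights adding as the relative dimensions add, which here gives $1 + 1 + \cdots + 1 + (k-1) = 2k-1$. The lisse property on $B_\bfk - \{1\}$ and the identification of the local monodromy at $1$ as a tame pseudoreflection are then standard consequences of the local analysis of iterated convolutions carried out in the same chapter of \cite{k-book}.

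For part (2) the plan is to compute Frobenius traces on both sides and appeal to Chebotarev. The trace function of $\cH(\psi;S_\chi,S_\rho)$ at $t \in B_E$ is given explicitly by Theorem \ref{thm: trad hyp sheaves exist}. The trace of $\pi^*\Hcan(S_\chi,S_\rho)$ at the same point can be computed by induction on $k$ using the convolutional Definition \ref{def:can-hyp}: at the level of trace functions, $!$-convolution on $\G_m$ translates to multiplicative convolution of functions on $E^\times$, so each step in the iterated convolution contributes the trace of a product of Kummer sheaves tensored with the reciprocal of the Jacobi-sum grossencharacter $\Lambda_{\chi,\rho/\chi}$. The key computational input is then the classical identity relating Jacobi sums to Gauss sums, which converts each $\Lambda_{\chi,\rho/\chi}$ into a ratio of Gauss-sum characters, together with the Hasse--Davenport norm relation $g(\psi_E,\chi\circ N_{E/\bfk}) = (-1)^{[E:\bfk]-1}\, g(\psi,\chi)^{[E:\bfk]}$ to handle base change to $E$. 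Collecting the resulting Gauss-sum factors, those that are absorbed by the trace function of $\cH(\psi;S_\chi,S_\rho)$ cancel, leaving exactly the character $\phi$ as described; the $[\bfk:\F]$-th power appears because the grossencharacters $\Lambda$ are defined globally, so $\bfk$-Frobenius acts as the $[\bfk:\F]$-th power of $\F$-Frobenius.

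The main obstacle I anticipate is not conceptual but combinatorial: tracking the $(-1)^{2k-1}$ sign appearing in the trace formula for $\cH(\psi;S_\chi,S_\rho)$, the signs built into the normalization of $\Lambda_{\chi,\rho/\chi}$ as a \emph{negative} Jacobi sum, the ordering of factors under iterated convolution (which \cite[p.\,11]{k} asserts is irrelevant but which must be fixed to compare formulas), and the precise matching of Jacobi-to-Gauss conversions needed to reassemble the product $\prod_{\chi\in S_\chi}(-g(\psi,\chi))\prod_{\rho\in S_\rho}(-g(\overline{\psi},\overline{\rho}))$. In practice I would defer these checks to the explicit calculations in \cite{k-book} and \cite[\S 5]{k}, where the precise relationship between canonical and traditional hypergeometric sheaves is established.
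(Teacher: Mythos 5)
Your proposal proves the right statements, but by a different route from the paper, which treats this proposition as a pure citation: for part (1) it locates the assertions in \cite[\S 4]{k} and the proofs in \cite[Theorem 8.4.2, (4) and (8)]{k-book}, and for part (2) it observes that the identity is essentially true by definition in Katz's paper --- on \cite[p.~10]{k} the \emph{local} canonical hypergeometric sheaf on $B_\bfk$ is defined to be exactly $\cH(\psi;S_\chi,S_\rho)\otimes(1/\phi)$ with this $\phi$, and Katz states there that the pullback of the global $\Hcan(S_\chi,S_\rho)$ to $B_\bfk$ is that local sheaf. Your plan for (2), by contrast, would reprove Katz's comparison from scratch by matching Frobenius traces over all finite extensions $E/\bfk$ and invoking Chebotarev, converting the Jacobi-sum characters $\Lambda_{\chi,\rho/\chi}$ into Gauss sums and using Hasse--Davenport for base change. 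That route is viable and more self-contained, at the cost of precisely the sign and normalization bookkeeping you acknowledge (which, as you say, is in effect carried out in \cite{k-book} and \cite[\S 5]{k}); the paper's route buys brevity by letting Katz's definition do the work.

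Two points in your sketch need repair, though neither is fatal to the plan. First, Chebotarev only identifies the semisimplifications of the two monodromy representations, so to conclude the asserted isomorphism of sheaves you must use (geometric) irreducibility of $\cH(\psi;S_\chi,S_\rho)$, which holds when $S_\chi$ and $S_\rho$ are disjoint --- true in the paper's application, where the multisets come out of $\mathbf{Cancel}$, but worth stating since the proposition is phrased for general multisets. Second, in part (1) purity does \emph{not} follow from a general ``Deligne theorem that $!$-convolution of pure objects preserves purity'': $Rf_!$ only gives an upper bound on weights, and the purity of hypergeometric sheaves is a theorem of Katz (\cite[Theorem 8.4.2, (4)]{k-book}) proved by comparing the $!$- and $*$-convolutions for irreducible hypergeometrics, again using disjointness of $S_\chi$ and $S_\rho$. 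Since your stated fallback for (1) is in any case the structural results of \cite[\S 8]{k-book} --- exactly what the paper cites --- these are misstatements of justification rather than gaps in the overall strategy.
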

\begin{proof} 
\begin{enumerate} \item Clear statements of all these facts are found in the first paragraph of \cite[\S4]{k}, although no proofs appear there. For proofs, see \cite[Theorem 8.4.2, (4)]{k-book} (for purity) and \cite[Theorem 8.4.2, (8)]{k-book} (for the remainder).
\item On \cite[p10]{k} a sheaf (called there $\Hcan(\psi;S_\chi,S_\rho)$) on $B_\bfk$ is defined by the formula  $\cH(\psi;S_\chi,S_\rho) \otimes (1/\phi)$ with $\phi$ as above. In \cite[p10, \P3]{k}, it is explained that the pullback of $\Hcan(S_\chi,S_\rho)$ to $B_\bfk$ is $\cH(\psi;S_\chi,S_\rho)$, as required.
\end{enumerate}
\end{proof}

\begin{proposition}[Katz] \label{detformula}
We consider $\det\cH(\psi,S_\chi,S_\rho)$, which will be a one-dimensional sheaf on $B_\bfk$. Let $A$ denote the character of $G_\bfk$ which sends a Frobenius to  
$$\left((\prod_{\chi\in S_\chi} \chi((-1)^{n-1})) q^{n(n-1)/2}\prod_{\chi\in S_\chi,\rho\in S_\rho}(-g(\bar{\psi},\bar{\rho}/\bar{\chi}))
\right)^{[\bfk:\F]}.$$
Then 
$$\det\cH(\psi,S_\chi,S_\rho)=\begin{cases}
A \otimes \cL_{(\prod_{\chi\in S_\chi} \chi)(x)} &\text{(if $\prod_{\chi\in S_\chi}\chi = \prod_{\rho\in S_\rho}\rho$)} \\
A \otimes \cL_{(\prod_{\chi\in S_\chi} \chi)(x)}\otimes \cL_{((\prod_{\rho\in S_\rho} \rho)/(\prod_{\chi\in S_\chi}\chi))(1-x)} &\text{(otherwise)} \\
\end{cases}$$
\end{proposition}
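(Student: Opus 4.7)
The plan is to reduce this to a determinant computation for the canonical hypergeometric sheaf $\Hcan$ via its convolution decomposition, and then appeal to the formulas in Katz's book \cite{k-book}. By Proposition \ref{Htrad props}(2), we have $\cH(\psi,S_\chi,S_\rho) \cong \pi^*\Hcan(S_\chi,S_\rho) \otimes \phi$, and since $\phi$ is one-dimensional, computing $\det\cH$ reduces to computing $\det\Hcan$ and multiplying by $\phi^k$, where $k$ denotes the common size of $S_\chi$ and $S_\rho$. From Definition \ref{def:can-hyp}(3), $\Hcan(S_\chi,S_\rho)$ is built as an iterated multiplicative $!$-convolution of $k$ rank-one sheaves of the form
$$\cL_{\chi_i(x)} \otimes \cL_{(\rho_i/\chi_i)(1-x)} \otimes (1/\Lambda_{\chi_i,\rho_i/\chi_i}),$$
each shifted by $[1]$.

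Next I would apply the general formula for the determinant of a $!$-convolution of tame rank-one sheaves on $\G_m-\{1\}$, as developed in \cite[\S8]{k-book}. The guiding principle is that convolving $k$ rank-one inputs produces a rank-$k$ output whose determinant is again a rank-one sheaf, obtained by tensoring together the ``$x$-part'' and ``$(1-x)$-part'' Kummer characters of the inputs. The product of the $x$-characters is $\prod_i \chi_i$, giving $\cL_{(\prod\chi)(x)}$; the product of the $(1-x)$-characters is $(\prod\rho)/(\prod\chi)$, giving $\cL_{((\prod\rho)/(\prod\chi))(1-x)}$, which is trivial precisely when $\prod\chi=\prod\rho$. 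This accounts for the two-case structure and the geometric (Kummer-sheaf) factors in the statement.

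The constant character $A$ then arises from collecting the scalar pieces. The product $\prod_i \Lambda_{\chi_i,\rho_i/\chi_i}^{-1}$ contributes a character whose Frobenius eigenvalue is a product of reciprocal Jacobi sums; rewriting these using the identity $J(\alpha,\beta) = g(\alpha)g(\beta)/g(\alpha\beta)$, and combining with the $k$-th power of the product-of-Gauss-sums character $\phi$ from Proposition \ref{Htrad props}(2), produces the prescribed Gauss-sum product $\prod(-g(\bar\psi,\bar\rho/\bar\chi))$. The $q^{k(k-1)/2}$ factor comes from the Tate-twist contribution of the $k$ shifts $[1]$ in the convolution, and the sign $\chi((-1)^{k-1})$ arises from tracking the coordinate changes $x\mapsto 1-x$, $x\mapsto -x$ implicit in repeated Kummer convolutions.

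The main obstacle, and the reason this computation is not entirely routine, will be the combinatorial bookkeeping in the final step: the factors $q^{k(k-1)/2}$ and $\chi((-1)^{k-1})$ are easy to mis-count, and the precise translation between the $\Lambda_{\chi_i,\rho_i/\chi_i}$-characters and Gauss sums requires care with normalizations. Rather than carrying out the convolution calculation by hand, it would likely be cleaner to cite the determinant formula for (traditional) hypergeometric sheaves directly from \cite[Ch.~8]{k-book}, where essentially this formula appears as a consequence of Katz's systematic study of $\cH(\psi;S_\chi,S_\rho)$, and verify only that the conventions match up with the statement above.
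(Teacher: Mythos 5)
Your final recommendation is exactly what the paper does: its entire proof of Proposition \ref{detformula} is the citation \cite[Theorem 8.12.2, cases 1a, 1b]{k-book}, so ending by ``cite the determinant formula from \cite[Ch.~8]{k-book} and check conventions'' lands on the same proof. Two remarks on the sketch that precedes that recommendation. First, the detour through $\Hcan$ and Proposition \ref{Htrad props}(2) is unnecessary: Katz's Theorem 8.12.2 is stated directly for the traditional sheaves $\cH(\psi;S_\chi,S_\rho)$ (his Hyp sheaves), which is why the paper can quote it verbatim; routing through $\Hcan$ only reintroduces the Jacobi-sum characters $\Lambda_{\chi_i,\rho_i/\chi_i}$ and the character $\phi$ that you would then have to cancel again by hand. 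Second, the ``guiding principle'' that the determinant of a $!$-convolution of rank-one inputs is simply the tensor product of the $x$-part and $(1-x)$-part Kummer characters is not a citable general formula and would not stand as a proof step on its own: Katz's actual argument pins down the geometric part of $\det\cH$ from the local monodromy data (tame pseudoreflection at $1$, known tame characters at $0$ and $\infty$, cf.\ Proposition \ref{Htrad props}(1)), which determines the determinant up to a geometrically constant twist, and then identifies the constant (the Gauss-sum/$q$-power character $A$) by a Frobenius trace computation; your proposed bookkeeping of $\Lambda$'s, shifts, and sign factors is plausible but, as you yourself note, is precisely the part that is easy to get wrong and is best replaced by the citation. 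So: correct conclusion, same ultimate approach as the paper, but the intermediate derivation should be dropped rather than tightened.
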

\begin{proof} This is \cite[Theorem 8.12.2, cases 1a, 1b]{k-book}.
\end{proof}

\subsubsection{Some conventions, and a lemma.} \label{sssec: some conventions and a lemma}
We now establish some notational conventions and prove a Lemma. We can consider the sheaf $\Hcan(S_\chi,S_\rho)$ as a representation of $\pi_1(B)$. Since $B$ has a rational point (we will choose, in particular, the point $2^N$), we can consider $\pi_1(B)$ to be $G_{\Q(\mu_N)}\ltimes \pi_1(B\times\Q^{ac})$. The determinant $\det\Hcan(S_\chi, S_\rho)$ will be a character of this group, and any such character will factor through the abelianization $(G_{\Q(\mu_N)}\ltimes \pi_1(B\times\Q^{ac}))^{ab}$. We then claim that:
$$(G_{\Q(\mu_N)}\ltimes \pi_1(B\times\Q^{ac}))^{ab}=(G_{\Q(\mu_N)})^{ab}\times (\pi_1(B\times\Q^{ac})^{ab})_{G_{\Q(\mu_N)}}.$$
(Here $(\pi_1(B\times\Q^{ac})^{ab})_{G_{\Q(\mu_N)}}$ denotes the coinvariants of $\pi_1(B\times\Q^{ac})^{ab}$ with respect to the natural action of $G_{\Q(\mu_N)}$.) This is an example of the general fact that when $G$ and $H$ are groups and we are given an action of $G$ on $H$ and form the semidirect product $G\ltimes H$, then we can write\footnote{
I am grateful to Greg Kuperburg for explaining how to prove this fact in a few sentences. The subgroup $[G\ltimes H,G \ltimes H]$ of $G\ltimes H$ is generated by $[H,H]\cup [G,H] \cup [G,G]$, so we can write
$(G\times H)^{ab}=(G\ltimes H)/\langle [H,H] \cup [G,H] \cup [G,G]\rangle$.
If we apply the relators $[H,H]$ to $G\ltimes H$, we get $G\ltimes H^{ab}$; then if we apply the relators $[G,H]$, we get $G\times (H^{ab})_G$; then finally if we apply $[G,G]$, we get $G^{ab}\times(H^{ab})_G$.} the abelianization of this semidirect product $(G\ltimes H)^{ab}$ as $G^{ab}\times (H^{ab})_G$.

Thus the determinant $\det\Hcan(S_\chi, S_\rho)$, a character of $(G_{\Q(\mu_N)}\ltimes \pi_1(B\times\Q^{ac}))^{ab}$ and hence of $(G_{\Q(\mu_N)})^{ab}\times (\pi_1(B\times\Q^{ac})^{ab})_{G_{\Q(\mu_N)}}$, can be written as a product of a character of $(\pi_1(B\times\Q^{ac})^{ab})_{G_{\Q(\mu_N)}}$ and a character of $(G_{\Q(\mu_N)})^{ab}$. We will write $\det\Hcan(S_\chi, S_\rho)|_{G_{\Q(\mu_N)}}$ for this latter character of $G_{\Q(\mu_N)}$.

Finally, we note that if $S_\chi=\{\chi\}$ and $S_\rho=\{\rho\}$ have size 1, $\Hcan(\{\chi\}, \{\rho\})$ is already a character, which we will call $\lambdacan(\{\chi\}, \{\rho\})$. We can again factor this as a character of $(\pi_1(B\times\Q^{ac})^{ab})_{G_{\Q(\mu_N)}}$ and a character of $(G_{\Q(\mu_N)})^{ab}$, and we will write $\lambdacan_{G_{\Q(\mu_N)}}(\{\chi\}, \{\rho\})$ for the latter character.

\begin{lemma} \label{detHcan lemma}
Suppose that we are given $k$ characters $\chi_1,\dots, \chi_k:\mu_N\to\mu_N$ satisfying $\prod\chi_i=1$. Then we have that:
\begin{align*}
\det\Hcan(\{\chi_1,\dots,\chi_n\},&\{1,\dots,1\})|_{G_{\Q(\mu_N)}} \\&= \lambdacan_{G_{\Q(\mu_N)}}(\{\chi_1\}, \{1\})^{n}\dots\lambdacan_{G_{\Q(\mu_N)}}(\{\chi_n\}, \{1\})^{n} \eps_l^{n(1-n)/2}
\end{align*}
\end{lemma}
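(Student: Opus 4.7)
My plan is to prove this equality of characters of $G_{\Q(\mu_N)}$ via the Chebotarev density theorem: for each prime $\mathcal{P}$ of $R_0$ not above $l$, letting $\bfk = R_0/\mathcal{P}$, I would show that both sides agree at $\Frob_{\mathcal{P}}$, which reduces to a comparison at $\Frob_\bfk$ after pulling back along $\pi:B\times_{\Spec R_0}\Spec\bfk\to B$. For the left-hand side, I would apply Proposition \ref{Htrad props}(2) to write $\pi^*\det\Hcan(\{\chi_i\},\{1,\dots,1\}) = \det\cH \otimes \phi^{-n}$. Since $\prod_i \chi_i = 1$ agrees with $\prod_j 1 = 1$, the first case of Proposition \ref{detformula} applies and $\det\cH = A$, the Kummer factor $\cL_{1(x)}$ being trivial. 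Evaluating $A(\Frob_\bfk)\phi^{-n}(\Frob_\bfk)$, the sign factor $\prod_i \chi_i((-1)^{n-1})$ equals $1$ by hypothesis, the $\rho$-Gauss sums in $\phi$ reduce to $(-g(\bar\psi,1))^n = 1$, and the remaining Gauss sums $\prod_i (g(\bar\psi,\chi_i)/g(\psi,\chi_i))^n$ collapse via the classical identity $g(\bar\psi,\chi) = \chi(-1) g(\psi,\chi)$ to $\prod_i \chi_i(-1)^n = 1$. Only the $q^{n(n-1)/2}$ factor survives, yielding $(\#\bfk)^{n(n-1)/2}$ after raising to $[\bfk:\F]$ (via Hasse--Davenport).

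For the right-hand side, I would expand the definition $\lambdacan(\{\chi\},\{1\}) = \cL_{\chi(x)} \otimes \cL_{(1/\chi)(1-x)} \otimes \Lambda_{\chi,1/\chi}^{-1}$ from Definition \ref{def:can-hyp}(2). The crucial observation is that the Kummer factors telescope when we take the product over $i$: since $\prod_i \chi_i = 1$, both $\prod_i \cL_{\chi_i(x)}^n$ and $\prod_i \cL_{(1/\chi_i)(1-x)}^n$ are trivial. Hence $\prod_i \lambdacan(\{\chi_i\},\{1\})^n = \prod_i \Lambda_{\chi_i,1/\chi_i}^{-n}$, which is already a character of $G_{\Q(\mu_N)}$ and so is its own arithmetic part. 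Using the classical Jacobi sum identity $J(\chi,\chi^{-1}) = -\chi(-1)$ for nontrivial $\chi$, the value at $\Frob_{\mathcal{P}}$ becomes $\prod_i \chi_i(-1)^{-n} = 1$, and multiplying by $\eps_l^{n(1-n)/2}(\Frob_{\mathcal{P}}) = (\#\bfk)^{n(n-1)/2}$ (in the geometric Frobenius convention used by Katz) reproduces the LHS.

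The main obstacle is not mathematical depth but careful bookkeeping: tracking geometric versus arithmetic Frobenius conventions, verifying that the $[\bfk:\F]$ exponents in Katz's formulas for both $A$ and $\phi$ are consistent (reflecting Hasse--Davenport), and handling the degenerate cases where some $\chi_i$ is trivial, where $g(\psi,1) = -1$ and $J(1,1) = \#\bfk - 2$ must replace the generic formulas but should still fold into the overall count consistently. Once these are navigated, the proof reduces to the twin cancellations of Gauss sums on one side and Kummer characters on the other, both ultimately driven by the single assumption $\prod_i \chi_i = 1$.
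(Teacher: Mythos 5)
Your proposal is correct, and for the left-hand side it follows the paper exactly: Chebotarev on Frobenii, pull back along $\pi$ to the residue field, then Proposition \ref{Htrad props}(2) and the first case of Proposition \ref{detformula}. Where you genuinely diverge is the right-hand side. The paper never expands Definition \ref{def:can-hyp}(2): it evaluates each $\lambdacan_{G_{\Q(\mu_N)}}(\{\chi_i\},\{1\})$ at $\Frob_\cP$ by the \emph{same} pullback mechanism (the rank-one case of the two Katz propositions), obtaining a ratio of Gauss sums that cancels verbatim against the Gauss sums in the expression for $\det\Hcan|_{G_{\Q(\mu_N)}}$; consequently it needs no Gauss- or Jacobi-sum identities at all, only the formal cancellation, with $q^{n(n-1)/2}$ left over. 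You instead telescope the Kummer factors using $\prod_i\chi_i=1$, reduce $\prod_i\lambdacan^{n}$ to $\prod_i\Lambda_{\chi_i,1/\chi_i}^{-n}$, and kill it with the degenerate identity $J(\chi,\chi^{-1})=-\chi(-1)$, while on the left you invoke $g(\bar\psi,\chi)=\chi(-1)g(\psi,\chi)$ to show the Gauss-sum content is itself trivial. Both routes work; yours buys the extra (and not obvious from the paper's proof) information that each side is \emph{individually} $\eps_l^{n(1-n)/2}$ on Frobenii, i.e.\ that the Jacobi-sum characters $\Lambda_{\chi_i,1/\chi_i}$ are finite order here, whereas the paper's route is insensitive to how one reads Katz's Gauss-sum arguments since identical factors appear on both sides and cancel. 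Two small points to nail down in your version: you must check that the factor $g(\bar\psi,\bar\rho/\bar\chi)$ in Proposition \ref{detformula} becomes $g(\bar\psi,\chi_i)$ (not $g(\bar\psi,\chi_i^{-1})$) when $\rho=1$ — it does, as one sees from the trace function of the rank-one sheaf $\cH(\psi;\{\chi\},\{1\})$ — since otherwise the collapse to $\prod_i\chi_i(-1)^n$ fails; and your argument, unlike the paper's formal cancellation, genuinely requires each $\chi_i$ nontrivial (for $J(\chi_i,\chi_i^{-1})=-\chi_i(-1)$ and indeed for $\Lambda_{\chi_i,1/\chi_i}$ to be the Weil character at all), which is harmless because the multiset $S'_\chi$ arising in Corollary \ref{cor: relate prim and hcan} consists of the characters omitted by $v$, all nontrivial.
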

\begin{proof} Since both sides are characters which factor through $(G_{\Q(\mu_N)})^{ab}$, it will suffice by Chebotarev to show that they agree on Frobenii. But at a finite place $\cP$ above a rational place $q$, and for $S_\chi$ and $S_\rho$ any pair of multisets of characters of equal size $k$ and with the product of the elements of each multiset 1, we have that
\begin{align*}
(\det&\Hcan(S_\chi,S_\rho)|_{G_{\Q(\mu_N)}})(\Frob_\cP)  \\
 &= \det\Hcan(S_\chi,S_\rho) (\Frob_{\cP,2^N}) \\
 \intertext{(Here $\Frob_{\cP,2^N}$ denotes Frobenius at the point $2^N\in B$; we get the Frobenius at this point rather than any other because we chose to use the point $2^N$ to think of $\pi_1(B)$ as $G_{\Q(\mu_N)}\ltimes \pi_1(B\times\Q^{ac})$.) Then, since $\Frob_{\cP,2}$ is local at $\cP$, we can rewrite this as follows, writing $\bfk$ for $R_0/\cP$  and $\pi: B\times_{\Spec R_0} \Spec R_0[1/l]\to B\times_{\Spec R_0} \Spec \bfk$ for the natural map.}
 &= \det\pi^*(\Hcan(S_\chi,S_\rho)) (\Frob_{\cP,2}) \\
 &= \frac{\det \cH(\psi; S_\chi,S_\rho) (\Frob_{\cP,2})}
       {(\prod_{\chi\in S_\chi} (-g(\psi,\chi))\prod_{\rho\in S_\rho} (-g(\overline{\psi},\overline{\rho})))^{[\bfk:\F]}} \quad \text{(by Prop.~\ref{Htrad props})}\\
 &= \left(\frac{
 (\prod_{\chi\in S_\chi} \chi((-1)^{n-1})) q^{n(n-1)/2}\prod_{\chi\in S_\chi,\rho\in S_\rho}(-g(\bar{\psi},\bar{\rho}/\bar{\chi}))
}
       {\prod_{\chi\in S_\chi} (-g(\psi,\chi))\prod_{\rho\in S_\rho} (-g(\overline{\psi},\overline{\rho})))}\right)^{[\bfk:\F]}  \cL_{(\prod_{\chi\in S_\chi} \chi)(x)} (\Frob_{\cP,2^N})\\
\intertext{(by Prop.~\ref{detformula}, recalling that the product of all the $\chi_i$ and all the $\rho_i$ are both 1)}
 &= \left(\frac{
 q^{n(n-1)/2}\prod_{\chi\in S_\chi,\rho\in S_\rho}(-g(\bar{\psi},\bar{\rho}/\bar{\chi}))
}
       {\prod_{\chi\in S_\chi} (-g(\psi,\chi))\prod_{\rho\in S_\rho} (-g(\overline{\psi},\overline{\rho})))}\right)^{[\bfk:\F]} 
\end{align*}
(since we assume $\prod_{\chi\in S_\chi} \chi$ is trivial, and therefore $\cL_{(\prod_{\chi\in S_\chi} \chi)(x)}$ is a trivial rank 1 sheaf).

Thus 
\begin{align*}
\det\Hcan(\{\chi_1,&\dots,\chi_n\},\{1,\dots,1\})|_{G_{\Q(\mu_N)}}(\Frob_\cP) \\
&=\left(\frac{q^{n(n-1)/2}\left(\prod_{i}(-g(\bar{\psi},1/\bar{\chi_i}))\right)^n}{\left[\prod_i(-g(\psi,\chi_i))^n\right] (-g(\bar{\psi},1))^{n^2}}\right)^{[\bfk:\F]} 
\end{align*}

On the other hand, it is easy to see by a similar argument that
\begin{align*}
\lambdacan_{G_{\Q(\mu_N)}}(\{\chi_i\},&\{1\})(\Frob_\cP) =\left(\frac{\chi_i((-1)^{n-1}) (-g(\bar{\psi},1/\bar{\chi_i}))}{(-g(\psi,\chi_i))(-g(\bar{\psi},1))}\right)^{[\bfk:\F]} \cL_{\chi_i}(\Frob_{\cP,2^N})
\end{align*}
so
\begin{align*}
\prod_i \lambdacan_{G_{\Q(\mu_N)}}(\{\chi_i\},\{1\})(\Frob_\cP) &=\left(\frac{\prod_i\chi_i((-1)^{n-1})\prod_i(-g(\bar{\psi},1/\bar{\chi_i}))}
{\left[\prod_i(-g(\psi,\chi_i))\right] (-g(\bar{\psi},1))^{n}}
\right)^{[\bfk:\F]}     (\bigotimes_i \cL_{\chi_i})(\Frob_{\cP,2^N})\\
&=\left(\frac{\prod_i(-g(\bar{\psi},1/\bar{\chi_i}))}
{\left[\prod_i(-g(\psi,\chi_i))\right] (-g(\bar{\psi},1))^{n}}
\right)^{[\bfk:\F]}.
\end{align*}
(since $\prod_i\chi_i=1$, and $(\bigotimes_i \cL_{\chi_i})(\Frob_{\cP,2^N}) =  \cL_{\prod_i\chi_i}(\Frob_{\cP,2^N})$, which is trivial.)
Whence, dividing,
\begin{align*}
\left(\frac{\det\Hcan(\{\chi_1,\dots,\chi_n\},\{1,\dots,1\})|_{G_{\Q(\mu_N)}}}{\prod_i (\lambdacan_{G_{\Q(\mu_N)}}(\{\chi_i\},\{1\}))^{n}}\right)(\Frob_\cP)&=q^{n(n-1)[\bfk:\F]} \\
&=\eps_l^{n(1-n)}(\Frob_\cP)
\end{align*}
This is the desired result.
\end{proof}

\subsection{The determinant}
\label{det as psi1 psi2}
We now turn to studying the determinant  $\det\Prim_l$, understanding of which will prove an important ingredient in the proof of our main theorem. Our main tool in doing so will be Corollary \ref{cor: relate prim and hcan} above, which relates the sheaves $\Prim_l$ to the `canonical' hypergeometric sheaves studied in the previous section.

We define a character $G_{\Q(\mu_N)}\ra\Q_l^\times$:
$$\phi_l:=\Lambda_{v}\prod_i (\lambdacan_{G_{\Q(\mu_N)}}(\{\chi_i\},\{1\}))^{2}$$
where the $\chi_1,\dots,\chi_n$ are the elements of the multiset $S'_\chi$ in the statement of Corollary \ref{cor: relate prim and hcan}, and where $\Lambda_v$ is the character there. 

Next, we adopt some notational conventions: the reader may wish to compare with \S\ref{sssec: some conventions and a lemma}. Using the rational point 2 on $T_0^{(M)}$, we write $\pi_1(T_0^{(M)})$ as $\pi_1(T_0^{(M)}\times\Q^{ac})\ltimes G_{\Q(\mu_N)}$ and observe that any character of $\pi_1(T_0^{(M)})$ factors through 
$$(\pi_1(T_0^{(M)}\times\Q^{ac})^{ab})_{G_{\Q(\mu_N)}}\times (G_{\Q(\mu_N)})^{ab}.$$
Thus we can write $\det\Prim_l$ as the product of two characters, $\det\Prim_l=\psi_1\psi_2$, where $\psi_1$ factors through $(\pi_1(T_0^{(M)}\times\Q^{ac})^{ab})_{G_{\Q(\mu_N)}}$ and $\psi_2$ through $G_{\Q(\mu_N)}$.
\begin{lemma} \label{det-lemma} We have that 
$$\psi_2 = \phi_l^{n} \eps_l^{n(1-n)/2}$$
\end{lemma}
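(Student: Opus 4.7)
The plan is to combine Corollary \ref{cor: relate prim and hcan} (relating $\Prim_l$ to a pullback of a canonical hypergeometric sheaf along the $N$-th power map, twisted by $\Lambda_v$) with Lemma \ref{detHcan lemma} (computing the $G_{\Q(\mu_N)}$-component of the determinant of such an $\Hcan$), and then to carefully track how the pullback by $[N]$ interacts with restriction to $G_{\Q(\mu_N)}$.

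First, from Corollary \ref{cor: relate prim and hcan}, on the common overlap inside $\A^1$ we have $\Prim_l \cong [N]^*\Hcan(S'_\chi,S'_\rho) \otimes \Lambda_v$, where $S'_\chi = \{\chi_1,\ldots,\chi_n\}$ is the multiset of characters omitted from $v$ and $S'_\rho$ consists of $n$ copies of the trivial character. Taking determinants, and using that $\Hcan(S'_\chi,S'_\rho)$ has rank $n$ (so that $\det(\Hcan \otimes \Lambda_v) = \det\Hcan \otimes \Lambda_v^n$), yields
\[
\det\Prim_l \cong [N]^*\det\Hcan(S'_\chi,S'_\rho) \otimes \Lambda_v^n.
\]

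Second, I would verify the hypothesis $\prod_i \chi_i = 1$ of Lemma \ref{detHcan lemma} via a direct calculation from the explicit form of $v$ given before Proposition \ref{geom-lemma}: the sum of the omitted exponents comprising $S'_\chi$ evaluates to a multiple of $N$. Applying the lemma then gives
\[
\det\Hcan(S'_\chi,\{1,\ldots,1\})|_{G_{\Q(\mu_N)}} = \prod_i \lambdacan_{G_{\Q(\mu_N)}}(\{\chi_i\},\{1\})^{n} \cdot \eps_l^{n(1-n)/2}.
\]

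The main obstacle is the final step: restricting the determinant formula for $\det\Prim_l$ to the subgroup $G_{\Q(\mu_N)}$ and carefully tracking how $[N]^*$ interacts with the abelianization decomposition $\pi_1(B)^{\mathrm{ab}} = G_{\Q(\mu_N)}^{\mathrm{ab}} \times (\pi_1(B\times\Qbar)^{\mathrm{ab}})_{G_{\Q(\mu_N)}}$ of \S\ref{sssec: some conventions and a lemma}. Choosing compatible basepoints $2 \in T_0$ and $2^N \in B$, the map $[N]_*:\pi_1(T_0) \to \pi_1(B)$ commutes with the arithmetic projection to $G_{\Q(\mu_N)}$, so that character restrictions to $G_{\Q(\mu_N)}$ can be transported between source and target; however, the Kummer-sheaf factors $\cL_{\chi_i(x^N)}$ and $\cL_{1/\chi_i(1-x^N)}$ appearing in $[N]^*\lambdacan(\{\chi_i\},\{1\})$ contribute to the $G_{\Q(\mu_N)}$-character via the Galois action on the relevant $N$-th roots, and these must be collated with the Jacobi-sum character $\Lambda_{\chi_i,1/\chi_i}^{-1}$ already present in $\lambdacan(\{\chi_i\},\{1\})$. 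Assembling the resulting contributions from $[N]^* \det\Hcan|_{G_{\Q(\mu_N)}}$ together with the factor $\Lambda_v^n$, and matching against the definition $\phi_l = \Lambda_v \prod_i \lambdacan_{G_{\Q(\mu_N)}}(\{\chi_i\},\{1\})^{2}$, should yield the claimed identity $\psi_2 = \phi_l^n \eps_l^{n(1-n)/2}$.
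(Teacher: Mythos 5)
Your route is the paper's route: start from Corollary \ref{cor: relate prim and hcan}, take determinants, verify $\prod_i\chi_i=1$ so that Lemma \ref{detHcan lemma} applies, and compare with the definition of $\phi_l$. The divergence is exactly at the step you flag as the ``main obstacle,'' and there the proposal stops short of a proof: you assert that the pulled-back Kummer factors $\cL_{\chi_i(x^N)}$, $\cL_{(1/\chi_i)(1-x^N)}$ must be ``collated'' with the Jacobi-sum characters and that this ``should yield'' the identity, without carrying anything out. In the paper there is nothing left to collate at that point. By the conventions of \S\ref{sssec: some conventions and a lemma} and \S\ref{det as psi1 psi2}, the $G_{\Q(\mu_N)}$-component of a character of $\pi_1(T_0^{(M)})$ (resp.\ of $\pi_1(B)$) is by definition its value along the section given by the rational basepoint $2$ (resp.\ $2^N$); since $[N](2)=2^N$, taking stalks of the isomorphism of Corollary \ref{cor: relate prim and hcan} at $t=2$ converts the $[N]^*$-pullback into evaluation at $2^N$, so one reads off directly that $\psi_2$ equals $\det\Hcan(S'_\chi,S'_\rho)|_{G_{\Q(\mu_N)}}$ times the relevant power of $\Lambda_v$, and Lemma \ref{detHcan lemma} finishes. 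The Kummer-sheaf contributions you propose to re-examine are already digested inside the proof of Lemma \ref{detHcan lemma}, where they evaluate trivially at the point $2^N$ precisely because $\prod_i\chi_i=1$; re-introducing them after pullback would amount to double counting.

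Relatedly, the deferred ``assembly'' is the only place where the lemma's content actually lives, and your sketch gives no indication of how the exponents are to be matched: your own displays produce $\psi_2=\Lambda_v^{n}\prod_i\lambdacan_{G_{\Q(\mu_N)}}(\{\chi_i\},\{1\})^{n}\,\eps_l^{n(1-n)/2}$, which must then be reconciled with $\phi_l^{n}\eps_l^{n(1-n)/2}$ for $\phi_l=\Lambda_v\prod_i\lambdacan_{G_{\Q(\mu_N)}}(\{\chi_i\},\{1\})^{2}$, and the powers of the $\lambdacan$'s do not visibly agree. (The paper's own normalizations here are stated somewhat loosely, so some care with these exponents is genuinely needed.) A complete argument has to make this comparison explicit rather than hoping extra factors emerge from the $[N]^*$-pullback, which, as explained above, contributes nothing beyond moving the basepoint from $2$ to $2^N$.
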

\begin{proof}
We start from the displayed equation in Corollary \ref{cor: relate prim and hcan}:
$$j_{1,*} \Prim^{n-2}_l \cong j_{2,*} [N]^* \Hcan(S'_\chi,S'_\rho))\otimes \Lambda_v$$
Taking stalks at $t=2$, we see that
$$(j_{1,*} \Prim^{n-2}_l)_{t=2} \cong (j_{2,*} [N]^* \Hcan(S'_\chi,S'_\rho))\otimes \Lambda_v)_{t=2}.$$
So
$$(\Prim^{n-2}_l)_{t=2} \cong ([N]^* \Hcan(S'_\chi,S'_\rho))\otimes \Lambda_v)_{t=2}.$$
And therefore:
$$(\Prim^{n-2}_l)_{t=2} \cong (\Hcan(S'_\chi,S'_\rho))\otimes \Lambda_v)_{t=2^N}$$
where both sides are naturally Galois representations. Taking determinants, we get
$$(\det\Prim^{n-2}_l)_{t=2} \cong (\det\Hcan(S'_\chi,S'_\rho))\otimes \Lambda_v)_{t=2^N}.$$
Since we wrote $\pi_1(T_0^{(M)})$ as $\pi_1(T_0^{(M)}\times\Q^{ac})\ltimes G_{\Q(\mu_N)}$ using the rational point 2, the left hand side of the displayed equation is just $\psi_2$. On the other hand, the right hand side is (using the notation of \S\ref{sssec: some conventions and a lemma}) $\det\Hcan(S'_\chi,S'_\rho)|_{G_{\Q(\mu_N)}} \otimes \Lambda_v$, since in \S\ref{sssec: some conventions and a lemma} we used the rational point $2^N$ to consider $\pi_1(B)$ to be $G_{\Q(\mu_N)}\ltimes \pi_1(B\times\Q^{ac})$. Thus we see
\begin{align*}
\psi_2 &= \det\Hcan(S'_\chi,S'_\rho)|_{G_{\Q(\mu_N)}} \otimes \Lambda_v
\\
&=\epsilon_l^{n(1-n)/2}\prod_{\chi\in S'_\chi} \lambdacan_{G_{\Q(\mu_N)}} (\{\chi\},\{1\})^n 
\end{align*}
using Lemma \ref{detHcan lemma}. Comparing with the definition of $\phi_l$, we see this is as required.
\end{proof}
Looking at the Hodge-Tate number of either side of the equation above at a prime $\fl$ over $l$, and writing $\mathrm{HT}_\fl(\phi_l)$ for the Hodge-Tate number of $\phi_l$ at that place, we get
\begin{align*}
2\times(\vec{h}(\fl)+(\vec{h}(\fl)+1)+\dots+(\vec{h}(\fl)+n-1)) &= 2n\,\mathrm{HT}_\fl(\phi_l) + n(n-1) \\
(2\vec{h}(\fl)+n-1)n &= 2n\,\mathrm{HT}_\fl(\phi_l)+ n(n-1) \\
(2\vec{h}(\fl))n &= 2n\,\mathrm{HT}_\fl(\phi_l)
\end{align*}
and we deduce that $\mathrm{HT}_\fl(\phi_l)=\vec{h}(\fl)$. Thus we can use twisting by $\phi_l$ to shift the Hodge-Tate numbers of an arbitrary representation by $\vec{h}$. We will write, given an $l$-adic representation $r$,  $r(-\vec{h})$ for the twist of $r$ by this character $\phi_l$, and $r(\vec{h})$ for the twist by the inverse.

\subsection{A Galois descent}
We now need to prove a lemma which will play a small but critical role in the argument for the main theorem of this section. The reader may wish to skip these arguments at first reading, examine the proof of the main theorem at the end of the section, and having seen \emph{why} we need the result we are about to prove, return to read the proof of it.

The issue it resolves is as follows. We have said that the basic structure of the argument which allows us to find prescribed residual representations in the cohomology of the Dwork family is the following: we construct a moduli space of points in the family which admit such isomorphisms, then we show it has a point over a suitable field by applying the theorem of Moret-Bailly (the form of this theorem which we will use is Proposition 2.1 of \cite{hsbt}).  The trouble is that we want to ensure that the point we construct will exist over a CM-field. Whereas the theorem of Moret-Bailly lends itself well to constructing points over totally real fields (since this is expressible as a local condition), asking for a CM field is not possible. Thus we need a less direct approach.

The basic idea we will use is as follows. We will construct a scheme over a \emph{totally real} field $F^+$, which will parametrize isomorphisms which exist when one passes to a certain quadratic totally imaginary extension $F$ of that totally real field $F^+$. Moret-Bailly will allow us to show that this scheme has a point over a totally real extension field $F^{+,}{}'$---this will correspond to the isomorphism we need over a quadratic totally imaginary extension $F'$ of $F^{+,}{}'$, which will be what we want. 

Our goal is to prove a technical result which shows that a scheme parameterizing such isomorphisms does in fact exist.

\smallskip

Let us proceed to the actual setup. Suppose we have a base scheme $S_0^+$, defined over a totally real field $F^+$ which contains the totally real subfield $\Q(\mu_N)^+$ of $\Q(\mu_N)$. Let $F:=F^+(\mu_N)$ and let us write $S_0$ for the base change $S_0^+\times_{F^+}F$. Let $\chi$ be a character of $G_F$ into $(\Z/M\Z)^\times$. Suppose further that we have two lisse rank $n$ mod $M$ sheaves $\cA, \cB$ on $S_0$. Suppose also that $\cA, \cB$ satisfy $\cA^c\cong\cA^\vee\otimes\chi$, $\cB^c\cong\cB^\vee\otimes\chi$, where $\cA^c$ is the `complex conjugate' sheaf. (That is, the sheaf whose corresponding representation of $\pi_1(S_0)$ is $r\circ j_c$ where $r$ is the representation of $\pi_1(S_0)$ associated to $\cA$, and $j_c$ is the outer automorphism of $\pi_1(S_0)$ coming from conjugation by a complex conjugation of the totally real subfield.) 

Thinking of $\cA$ as a mod $M$ representation $V_\cA$ of $\pi_1(S_0)$, this is the same as giving a pairing $\langle*,*\rangle$ on $V_\cA$ which satisfies
$$\langle \sigma v_1,j_c(\sigma) v_2\rangle = \chi(\sigma) \langle v_1,v_2\rangle$$
and similarly for $\cB$. We will suppose in addition that these pairings are \emph{symmetric}. (That is, $\cA$ and $\cB$ have sign +1 in the sense of Bella\"iche-Chenevier: see \cite[\S1.1]{bc}.)

Suppose finally that there is an isomorphism $\eta: \wedge^n \cA \ra \wedge^n \cB$ and we have fixed one such isomorphism. This isomorphism should be compatible with the maps $\cA^c\cong\cA^\vee\otimes\chi$, $\cB^c\cong\cB^\vee\otimes\chi$ in the following sense. First note that $\cA^c\cong\cA^\vee\otimes\chi$ will induce a map $ (\wedge^n \cA)(\wedge^n \cA)^c\ra\chi^n$, and hence we get (using a similar map for $\cB$) an distinguished isomorphism $ (\wedge^n \cA)(\wedge^n \cA)^c\cong (\wedge^n \cB)(\wedge^n \cB)^c$ (since both have specified isomorphisms to $\chi^n$). $\eta$ will also induce an isomorphism $ (\wedge^n \cA)(\wedge^n \cA)^{c}\cong (\wedge^n \cB)(\wedge^n \cB)^{c}$; we ask that these agree. 

There is a certain important circumstance in which we can arrange for a compatible isomorphism $\eta$ to exist. Suppose that we have \emph{some} isomorphism $\eta': (\wedge^n \cA) \ra (\wedge^n \cB)$, and suppose moreover that there is some map $\nu: \cA\isoto\cB$ which is an isomorphism in the category of vector spaces equipped with a pairing but no additional structure. (That is, this isomorphism $\nu$ need not respect the Galois action at all, but does form a commutative square
$$\xymatrix{
\cA^c \ar[r]\ar[d]^{\nu} & \cA^\vee \otimes \chi\\
\cB^c \ar[r] & \cB^\vee \otimes \chi \ar[u]^{\nu^\vee}
}$$
with the maps coming from our chosen isomorphisms $\cA^c\cong\cA^\vee\otimes\chi$, $\cB^c\cong\cB^\vee\otimes\chi$.) Then taking the $\wedge^n$ of $\nu$ we can construct an isomorphism $\eta$ of 1-dimensional vector spaces (without Galois action) $(\wedge^n\cA)\ra(\wedge^n\cB)$. Now the key point: \emph{given that $\eta'$ exists, this $\eta$ will automatically respect the Galois action}. (This is because the existence of $\eta'$ tells us that the characters by which Galois acts on each side are identical, which will force any isomorphism between $(\wedge^n\cA)$ and $(\wedge^n\cB)$ in the category of vector spaces to also be an isomorphism in the category of vector spaces equipped with a Galois action.) It is also immediate, given the commutative diagram above in the construction of $\eta$, that it is compatible with the isomorphisms $\cA^c\cong\cA^\vee\otimes\chi$, $\cB^c\cong\cB^\vee\otimes\chi$ in the sense we require.

Now, given a scheme $R^+$ over $S_0^+$ we can base change to form a scheme $R:=R^+\times_{S_0^+} S_0$ over $S_0$. We can define a functor
\begin{align*}
S_{\cA,\cB}:\left\{S^{+}_0\text{-schemes}\right\} & \rightarrow \mathbf{Set} \\
R^+&\mapsto\left\{\parbox{6.8cm}{Isomorphisms $\xi$ between the pull back to $R$ of $\cA$ and $\cB$, such that $(\wedge^n\xi)=\eta$.} \right\} 
\end{align*}
(Here `isomorphisms' means isomorphisms of sheaves \emph{with stipulated pairings $\langle*,*\rangle$}.)

\begin{proposition} This functor is represented by a scheme. \label{representable-lemma}
\end{proposition}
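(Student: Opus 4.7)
The plan is to realize $S_{\cA,\cB}$ as a Weil restriction of scalars of a representable functor living on $S_0$-schemes, so that the descent from $S_0$ down to $S_0^+$ is handled once and for all by a general mechanism. Concretely, I first define a functor $J$ on $S_0$-schemes by
$$J(T)=\left\{\xi:\cA|_T\isoto\cB|_T \ \middle|\ \xi \text{ respects the pairings, and } \wedge^n\xi=\eta|_T\right\},$$
and show that it is represented by a scheme (indeed finite over $S_0$). Then, since $S_0\to S_0^+$ is finite \'etale of degree $2$, I take the Weil restriction $\mathrm{Res}_{S_0/S_0^+}J$, which by its defining adjunction satisfies $(\mathrm{Res}_{S_0/S_0^+}J)(R^+)=J(R^+\times_{S_0^+}S_0)=S_{\cA,\cB}(R^+)$ for any $S_0^+$-scheme $R^+$.

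For the first step, the unconstrained Isom functor $T\mapsto\mathrm{Isom}(\cA|_T,\cB|_T)$ is represented by a finite \'etale $S_0$-scheme $J_0$: since $\cA$ and $\cB$ are lisse sheaves of free $\Z/M\Z$-modules of rank $n$, they correspond to continuous actions of $\pi_1(S_0)$ on $V_\cA, V_\cB\cong (\Z/M\Z)^n$, and the finite set $\mathrm{Isom}(V_\cA,V_\cB)\cong\GL_n(\Z/M\Z)$ carries the natural $\pi_1(S_0)$-action $\gamma\cdot\xi=\rho_\cB(\gamma)\xi\rho_\cA(\gamma)^{-1}$, which is precisely the data of a finite \'etale $S_0$-scheme whose $T$-points are the Isom set. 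The condition that $\xi$ preserve the given symmetric pairings is cut out by finitely many polynomial equations in the matrix coefficients, and the condition $\wedge^n\xi=\eta|_T$ is a single polynomial equation on the determinant (trivialising $\wedge^n\cA$ and $\wedge^n\cB$ locally via $\eta$ itself). These carve out a closed subscheme $J\subseteq J_0$, still finite over $S_0$, which represents the constrained functor.

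For the second step, $J$ is finite, hence affine, over $S_0$, and $S_0\to S_0^+$ is finite, so the Weil restriction $\mathrm{Res}_{S_0/S_0^+}J$ exists as a scheme over $S_0^+$ by the standard existence results (see, e.g., BLR, \S7.6). The defining property of Weil restriction gives the identification of functors displayed above, which completes the proof.

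The main (and essentially only) point requiring care is the verification that the Isom functor and the two added conditions assemble into a representable subfunctor of a finite \'etale $S_0$-scheme; this is formal once one remembers that the coefficient ring is the finite ring $\Z/M\Z$ and that both sheaves are locally constant of the same rank, so everything reduces to standard finite \'etale descent. If one wished to avoid appealing to Weil restriction, an equivalent route would be to put a Galois descent datum on $J$ over $S_0\to S_0^+$ using the isomorphisms $\cA^c\cong\cA^\vee\otimes\chi$, $\cB^c\cong\cB^\vee\otimes\chi$ and the stated compatibility of $\eta$ with them; the cocycle condition then amounts precisely to the pairings being \emph{symmetric} (the sign-$+1$ hypothesis), and effective descent for affine schemes yields the same $S_0^+$-scheme.
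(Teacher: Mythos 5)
The plan founders at its first step: the functor $J$ you define on $S_0$-schemes (pairing-preserving isomorphisms $\cA|_T\isoto\cB|_T$ lifting $\eta$) is not representable in general, and the justification offered does not make sense in this setting. The Isom scheme $J_0$ is finite \'etale over $S_0$, corresponding to the finite $\pi_1(S_0)$-set $X=\mathrm{Isom}(V_\cA,V_\cB)$ with the conjugation action; it is not a closed subscheme of a matrix space over $S_0$, so "polynomial equations in the matrix coefficients" (the entries lie in $\Z/M\Z$, not in the structure sheaf) do not cut out subschemes. The correct criterion is that a subfunctor given by a condition on the fibre is representable (by an open and closed subscheme) exactly when the corresponding subset of $X$ is stable under the monodromy action of $\pi_1(S_0)$. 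The condition $\wedge^n\xi=\eta$ is stable (because $\eta$ is Galois-equivariant), but the pairing condition is not: the pairings here are conjugate-dualities, satisfying $\langle\sigma v_1,j_c(\sigma)v_2\rangle=\chi(\sigma)\langle v_1,v_2\rangle$ rather than ordinary Galois-invariance. In terms of the operation $\iota\mapsto\tilde\iota$ (inverse of the adjoint), one has $(\alpha^{-1}\iota\alpha)\tilde{\ }=j_c(\alpha)^{-1}\tilde\iota\, j_c(\alpha)$, so if $\iota=\tilde\iota$ then $\alpha^{-1}\iota\alpha$ is again pairing-preserving only when $\iota$ intertwines the actions of $\alpha j_c(\alpha)^{-1}$ --- which fails for general pairing-preserving $\iota$ and general $\alpha$. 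Hence the subset of pairing-preserving isomorphisms is not $\pi_1(S_0)$-stable, your claimed closed subscheme $J\subseteq J_0$ does not exist, and the Weil restriction step has nothing to restrict. (A symptom worth noticing: your main route never uses the hypothesis that the pairings are symmetric, i.e.\ the sign $+1$ assumption, which is essential to the statement.)

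The underlying point is that "compatibility with the pairings" is not a condition on local systems over $S_0$ at all; it intrinsically involves the conjugation $c$, i.e.\ the descent to $S_0^+$: the involution $\iota\mapsto\tilde\iota$ is not an $S_0$-morphism of $J_0$ but a semilinear map covering the automorphism $c$ of $S_0/S_0^+$. This is exactly how the paper's proof proceeds: it takes the set $X_\eta$ of \emph{all} isomorphisms lifting $\eta$ (no pairing condition), extends the $\pi_1(S_0)$-action on $X_\eta$ to $\pi_1(S_0^+)$ by letting $c$ act by $\iota\mapsto\tilde\iota$ --- the computation $\tilde{\tilde\iota}=\iota$, which makes this a well-defined action, is precisely where the sign $+1$ (more accurately, equality of the two signs) is used --- and then pairing-preservation emerges as the fixed-point condition under $c$ when one computes $R^+$-points. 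Your closing alternative ("put a descent datum on $J$") is close to this in spirit, but it must be applied to the full Isom-lifting-$\eta$ scheme over $S_0$, not to the pairing-preserving subfunctor $J$; as written it presupposes the representability that is exactly in question. Note also that the paper's final remark (points over a field containing $\Q(\mu_N)$ amount to isomorphisms \emph{disregarding} the pairing) confirms that the base change to $S_0$ of the representing scheme is the unconstrained Isom-lifting-$\eta$ cover, not your $J$.
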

\begin{proof} We will begin by constructing a certain finite \'etale cover $S_1$ of the scheme $S_0^+$; we will then show that this $S_1$ represents the functor we want.

We can specify an finite \'etale cover of $S_0$ by giving a representation of $\pi_1(S_0^+)$ into the symmetric group on $Q$ letters, where $Q$ is the number of sheets, or equivalently by giving an action of $\pi_1(S_0^+)$ on a $Q$ element set. We can think of $\cA$ and $\cB$ as giving mod $M$ representations of $\pi_1(S_0)$, say acting on the free $\Z/M\Z$ modules $V_\cA$ and $V_\cB$ respectively. Thus we can immediately construct an \'etale cover of $S_0$ by allowing $\pi_1(S_0)$ to act on the finite set $X$ of isomorphisms of vector spaces $\iota:V_\cA \mapsto V_\cB$, via the action $A$ given by
\begin{align} \label{act.eq}
\begin{split}
\pi_1(S_0) \times X &\ra X\\
(\alpha, \iota) &\mapsto \alpha^{-1}\iota\alpha 
\end{split}
\end{align}
(and indeed, it is easy to see that this corresponds to the variety $\mathrm{Isom}(\cA,\cB)$ over $S_0$ parameterizing isomorphisms between $\cA$ and $\cB$ ignoring the pairing $\langle*,*\rangle$). If we replaced the set $X$ with the smaller set $X_\eta$ of isomorphisms whose induced map on $\wedge^n$'s is $\eta$, then we would get the variety parameterizing isomorphisms lifting $\eta$.

We wish, however, to construct an \'etale covering of $S_0^+$, which means we need to extend the above action to an action of $\pi_1(S^+_0)$. Now, if we write $c$ for complex conjugation $c\in\pi_1(S^+_0)$, then $\pi_1(S^+_0)$ is generated by $c$ and $\pi_1(S_0)$; so we just need to define an action of $c$ on $X_\eta$ which commutes in the right way with all the other actions we have defined.

Given an isomorphism $\iota:V_\cA \mapsto V_\cB$, we can define an isomorphism $\tilde{\iota}$ as follows: for all $v_1,v_2 \in V_\cA$, we impose $\langle \iota v_1, \tilde{\iota} v_2\rangle=\langle v_1,v_2\rangle$. (Thus $\tilde{\iota}$ is the `inverse of the adjoint' of $\iota$.) We can easily calculate that $\tilde{\tilde{\iota}}=\iota$, since:
\begin{align*}
\langle \tilde{\iota} v_1, \tilde{\tilde{\iota}} v_2\rangle &= \langle v_1,  v_2\rangle = \mathrm{sgn}\,V_\cA\, \langle v_2,  v_1\rangle 
 = \mathrm{sgn}\,V_\cA\, \langle \iota v_2,  \tilde{\iota} v_1\rangle\\
&= \mathrm{sgn}\,V_\cA\,\mathrm{sgn}\,V_\cB\, \langle  \tilde{\iota} v_1, \iota v_2\rangle = \langle  \tilde{\iota} v_1, \iota v_2\rangle
\end{align*}
(\emph{note that at this point we use the fact that both $\cA$ and $\cB$ have sign +1; or, more precisely, that they have the same sign}). Moreover, we note that for $\alpha\in\pi_1(S_0)$, we have $(\alpha^{-1}\iota\alpha)\tilde{\,}= j_c(\alpha)^{-1}\tilde{\iota}j_c(\alpha)$ where $j_c(\alpha)$ as above denotes conjugation by complex conjugation; the demonstration goes as follows: 
\begin{align*}
\langle \alpha^{-1}\iota\alpha v_1, j_c(\alpha)^{-1}\tilde{\iota}j_c(\alpha) v_2 \rangle &= \chi(\alpha^{-1}) \langle \iota \alpha v_1, \tilde{\iota} j_c(\alpha) v_2 \rangle \\
&= \chi(\alpha^{-1}) \langle  \alpha v_1,  j_c(\alpha) v_2 \rangle \\
&= \chi(\alpha^{-1})\chi(\alpha) \langle  v_1,   v_2 \rangle = \langle  v_1,   v_2 \rangle 
\end{align*}
These two relations ensure that we can extend our action $A$ on $X_\eta$ to an action of $\pi_1(S^+_0)$ by stipulating that $A(c)(\iota)=\tilde{\iota}$. (The fact that this action preserves $X_\eta$ inside $X$ is a consequence of the fact that we chose the isomorphism $\eta$ compatibly with the pairings on $\cA,\cB$.) Hence we have constructed an \'etale cover of $S_0^+$, which we will call $S^+_\eta$.

We now pass to consider the question of what it means to give a $S^+_\eta$-scheme, say $f:R^+\ra S^+_\eta$. From general facts about \'etale covers, this is the same as giving a $S^+_0$-scheme $f_0:R^+\ra S^+_0$ together with a point in $X_\eta$ which is stabilized by the image of $\pi_1(R^+)$ in $\pi_1(S^+_0)$ under the map on $\pi_1$ induced by $f_0$. Now, given such a map $f_0$, pullback induces a map $R\ra S_0$ and we will have a commutative diagram:
$$\xymatrix{
\pi_1(R^+) \ar[rr]^{f_0} && \pi_1(S^+_0) \ar[r] & \mathrm{Sym}(X_\eta)\\
\pi_1(R) \ar@{^{(}->}[u]^{\text{index 2}}\ar[rr] && \pi_1(S_0) \ar[ur]\ar@{^{(}->}[u]^{\text{index 2}}\\
}$$
To give a point in $X_\eta$ stabilized by the image of $f_0$ is to give
\begin{enumerate}
\item A point in $X_\eta$ stabilized by the image of $\pi_1(R)$ in $\pi_1(S_0)$...
\item ...which is also fixed by $c\in\pi_1(S^+_0)$.
\end{enumerate}
Now, point 1 here is equivalent (by e.g.~the remarks immediately after equation \ref{act.eq}) to giving an isomorphism $\theta$ between the pullbacks of $\cA$ and $\cB$ from $S_0$ to $R$ ignoring the pairing $\langle*,*\rangle$. Then point 2 imposes additionally that $\tilde{\theta}=\theta$; unpacking this, it is seen to be equivalent to $\theta$ preserving the pairing $\langle*,*\rangle$. This is as required.
\end{proof}

One final remark should be made in this connection. What does it mean to give a point of the scheme (or equivalently the functor) just defined over a field $K$ which \emph{contains} $\Q(\mu_N)$? A fairly easy check shows that this is just the same as giving an isomorphism between the pullback to $K$ of $\cA$ and the pullback to $K$ of $\cB$, now \emph{disregarding} the pairing.

\subsection{Realizing residual representations}
We are now in a position to prove a result allowing us to realize residual Galois representations in the cohomology of the family $Y_t$.

\begin{proposition} \label{geom-prop} The family $Y_t$ and the piece of its cohomology corresponding to $\Prim_{l,t}$ have the following property:

Suppose $K/F$ is a Galois extension of CM fields, with totally real subfields $K^+, F^+$, $n$ is a positive integer, $l_1, l_2 \dots l_r$ are distinct primes which are unramified in $K$, and that we are given residual representations
$$\bar{\rho}_i:\Gal (\Fbar/F) \ra \GL_n(\F_{l_i}).$$
Suppose further that we are given $\fq_1, \fq_2, \dots, \fq_s$, distinct primes of $F$ above rational primes $q_1,\dots,q_s$ respectively, and $\cL$ a set of primes of $F$ not including the $\fq_j$ or any primes above the $l_i$. Suppose that each $q_j$ satisfies $q_j\notdiv N$. Finally, suppose that the following conditions are satisfied for each $i$:
\begin{enumerate}
\item \label{geom-prop-cdx-l-cn}
$l_i>C(n, N)$

\item \label{geom-prop-cdx-l-mod}
$l_i\equiv 1 $ mod $N$

\item \label{geom-prop-cdx-unram-at-cl}
$\bar{\rho}_i$ is unramified at each prime of $\cL$ and at the $l_k$ for $k\neq i$.

\item \label{geom-prop-cdx-inertia}
For each prime $\fw$ above $l_i$, we have that 
$$\bar{\rho}_i | _{I_\fw} \cong 1\oplus \epsilon_{l_i}^{-1}\oplus\dots\oplus  \epsilon_{l_i}^{1-n}$$

\item \label{geom-prop-cdx-dual}
We have that there exists a polarization $\bar{\rho}_i^c\cong \bar{\rho}_i^\vee \eps_{l_i}^{1-n}$; given this, we can associate to $\bar{\rho}_i$ a sign in the sense of Bella\"iche-Chenevier and we require that this sign is +1. We also require that the polarization can be chosen so that its determinant is the same as the determinant of the polarization $\Prim_{l_i,t}^c \cong \Prim_{l_i,t}^\vee \eps_{l_i}^{2-N}$. Finally, we require that $\det\bar{\rho_i}\cong \eps_{l_i}^{n(1-n)/2}$
\end{enumerate}

Then we can find a CM field $K'/F$, linearly disjoint from $K/F$, a finite-order character $\chi_i:\Gal(\Qbar/K') \ra \Q_{l_i}$ for each $i$, and a $t\in K'$ such that, 
\begin{enumerate}
\item \label{geom-prop-conclude-unram-and-split}
All primes of $F$ above the $\{l_i\}_{i=1,\dots,r}$ and all the $\cL$ are unramified in $K'$
\item \label{geom-prop-conclude-good-reduction}
For all $i$, $Y_t$ has good reduction at each prime above lying above $l_i$, and each prime above the primes of $\cL$.
\item \label{geom-prop-conclude-crys-ht}
For all $i$ and $\fw|l_i$, $\Prim_{\fw,t}(\vec{h})\otimes\chi_i$ is crystalline with H-T numbers $\{0,1,\dots,n-1\}$.
\item \label{geom-prop-conclude-steinberg}
For each $\fQ$ above some $\fq_j$, we have that $(\Prim_{l_i,t})^{ss}$ and $\chi_i$ are unramified at $\fQ$, with $(\Prim_{l_i,t}^{ss}(\vec{h})\otimes\chi)(\Frob_{\fQ})$ having eigenvalues $\{1, \#k(\fQ), \#k(\fQ)^2,\dots,\#k(\fQ)^{n-1}\}$.
\item \label{geom-prop-conclude-agree-mod}
$\Prim[l_i]_{t}(\vec{h})\otimes\bar{\chi}_i = \bar{\rho}_i$ for all $i$.
\end{enumerate}
\end{proposition}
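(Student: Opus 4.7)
The plan is to apply Moret-Bailly (in the form of \cite[Proposition 2.1]{hsbt}) to an $F^+$-scheme parameterizing the data we seek. Setting $M = \prod_i l_i$, I would first form over $T_0^{(M)} \times_{\Spec R_0} \Spec F$ a moduli $F$-scheme $T$ whose points classify tuples of polarization-respecting isomorphisms $\Prim[l_i]_t(\vec h) \otimes \bar\chi_i \isoto \bar\rho_i$, with residual auxiliary characters $\bar\chi_i$ chosen to force determinants to agree: Lemma \ref{det-lemma} gives $\det \Prim_{l_i}(\vec h) = \psi_1\,\eps_{l_i}^{n(1-n)/2}$ with $\psi_1$ a character of the geometric fundamental group, while hypothesis (\ref{geom-prop-cdx-dual}) yields $\det \bar\rho_i \cong \eps_{l_i}^{n(1-n)/2}$, so one takes $\bar\chi_i$ satisfying $\bar\chi_i^n = \bar\psi_1^{-1}$ (which exists since $l_i \equiv 1 \pmod N$). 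Proposition \ref{representable-lemma} then descends $T$ to an $F^+$-scheme $T^+$; this is exactly where the sign-$+1$ hypothesis and the matching of polarization determinants are used.

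To apply Moret-Bailly to $T^+$ I need geometric irreducibility and local solvability. Geometric irreducibility reduces to surjectivity of the combined mod-$M$ monodromy $\pi_1(T_0^{(M)}) \twoheadrightarrow \prod_i \SL_n(\F_{l_i})$, which is exactly Corollary \ref{monodromy-mod-prop} (using $l_i > C(n,N)$ from hypothesis (\ref{geom-prop-cdx-l-cn})). For local solvability I must produce $F^+_v$-points of $T^+$ at a finite set $\Sigma$ of places: archimedean places are automatic from the CM hypothesis; at $\fw \mid l_i$, choose $t$ very close $l_i$-adically to $0$ and use Lemma \ref{geom-lemma}(\ref{geom-lemma-ht})--(\ref{geom-lemma-inertia}) together with hypothesis (\ref{geom-prop-cdx-inertia}) to match $\Prim[l_i]_t$ with $\bar\rho_i$ on inertia (up to an unramified twist absorbed into $\bar\chi_i$) and to deliver the crystalline and Hodge-Tate conclusions; at $\fq_j$, choose $t$ with $v_{\fq_j}(t) < 0$ divisible by every $l_i$, so that Lemma \ref{geom-lemma}(\ref{geom-lemma-steinberg}) produces the Steinberg eigenvalue pattern while Lemma \ref{geom-lemma}(\ref{geom-lemma-unram-mod}) makes the mod-$l_i$ reduction unramified (and hence abstractly compatible with the unramified $\bar\rho_i$); at $v \in \cL$, pick $t$ integral with $1-t^N$ a unit, giving good reduction of $Y_t$ via Lemma \ref{geom-lemma}(\ref{geom-lemma-good-red}).

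The hard part will be the bookkeeping that upgrades each local point of the Dwork base to an actual local point of $T^+$: one must verify that the local unramified twist needed to match $\Prim[l_i]_t$ with $\bar\rho_i$ really is realized inside $T^+$ compatibly with the polarizations, and that the local choices of $\bar\chi_i$ piece together into a single finite-order global character $\chi_i$ over the eventual extension. Hypothesis (\ref{geom-prop-cdx-dual}) on polarization determinants is precisely what makes this possible, and the linear-disjointness conditions the theorem ultimately requires are arranged by feeding extra Chebotarev places into $\Sigma$ in the usual manner. Once geometric irreducibility and local solvability are in place, Moret-Bailly produces a totally real extension $K'^+/F^+$, linearly disjoint from the totally real subfield of $K$ and from any auxiliary ramification-control fields, together with a $K'^+$-point of $T^+$; the corresponding $K'$-point with $K' = K'^+ \cdot F$ then supplies the $t \in K'$ and the isomorphisms demanded by conclusions (\ref{geom-prop-conclude-unram-and-split})--(\ref{geom-prop-conclude-agree-mod}).
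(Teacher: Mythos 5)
Your overall architecture is the paper's (a moduli scheme of pairing-respecting isomorphisms with fixed determinant, descended to $F^+$ via Proposition \ref{representable-lemma}, geometric connectedness from Corollary \ref{monodromy-mod-prop}, local points at the archimedean places, at the places over the $l_i$ and $\cL$, and at the $\fq_j$, then Moret-Bailly), but there is a genuine gap at the determinant-matching step. You quote Lemma \ref{det-lemma} as giving $\det\Prim_{l_i}(\vec h)=\psi_1\,\eps_{l_i}^{n(1-n)/2}$ and propose to remove $\psi_1$ by choosing residual characters with $\bar\chi_i^{\,n}=\bar\psi_1^{-1}$. This does not make sense as written: $\psi_1$ is a character of $(\pi_1(T_0^{(M)}\times\Q^{ac})^{ab})_{G_{\Q(\mu_N)}}$, i.e.\ of the geometric monodromy of the base curve, not a Galois character of the ground field, so twisting by a Galois character $\bar\chi_i$ cannot cancel it (and the asserted existence of an $n$-th root ``since $l_i\equiv 1$ mod $N$'' is unjustified). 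Moreover the construction needs the determinant sheaves to agree on the nose over the whole base: the functor of Proposition \ref{representable-lemma} carries a fixed isomorphism $\eta$ of $\wedge^n$'s, and it is precisely this that makes the fibres of $T_\cW\to T_0^{(M)}$ torsors under $\SL_n(\Z/M\Z)$, so that Corollary \ref{monodromy-mod-prop} yields geometric connectedness. The missing observation, which is how the paper argues, is that $\psi_1$ is \emph{trivial}: by Proposition \ref{geom-lemma}, part (\ref{geom-lemma-monod}), the geometric monodromy of $\Prim_l$ lies in $\SL_n$, so $\det\Prim_l$ kills the geometric fundamental group, whence $\det\Prim[M](\vec h)=\eps^{n(1-n)/2}$ exactly, matching $\det\bar\rho_{\Z/M\Z}$ by hypothesis (\ref{geom-prop-cdx-dual}); no auxiliary twist is needed to set up the moduli problem.

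Relatedly, the role you assign to the $\chi_i$ is off. In the correct argument they are introduced only \emph{after} Moret-Bailly, to renormalize the unramified unit $\alpha$ in the Frobenius eigenvalues at the $\fQ$ (one first makes a further totally real extension so that the units are $\equiv 1$ mod $l_i$, then chooses a global finite-order character, unramified at $\cL$, at the primes over the $l_i$ and at $\fQ$, sending $\Frob_\fQ$ to $\alpha^{-1}$); there is no ``piecing together of local $\bar\chi_i$'', and deferring this to bookkeeping attributed to hypothesis (\ref{geom-prop-cdx-dual}) conflates two unrelated points. Also, at the $\fq_j$ you implicitly assume $\bar\rho_i$ is unramified there, which is not among the hypotheses; the paper avoids this by taking the local condition at $\fq_j$ over $\overline F_{\fq_j}$, where both mod $M$ representations have finite image and become isomorphic after a large local extension, so that only $v_{\fq_j}(t)<0$ is imposed and neither part (\ref{geom-lemma-unram-mod}) of Proposition \ref{geom-lemma} nor divisibility of $v_{\fq_j}(t)$ by the $l_i$ is needed.
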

\begin{proof}
Throughout this proof, we will set $M=\prod l_i$.

Since $\GL_n(\Z/M\Z)$ is just $\GL_n(\Z/l_1\Z)\times\dots\times\GL_n(\Z/l_r\Z)$, we can combine the $\rho_i$ into a single representation
$$\rho_{\Z/M\Z}:\Gal (\Fbar/F) \ra \GL_n(\Z/M\Z)$$
and similarly we can combine the $\phi_l$'s mod $l$ for different $l_i$ too, to get a mod $M$ character; we will write `$(\vec{h})$' for the twist by this character also. 

We note that, thinking of $\Prim[M]$ and $\bar{\rho}_{\Z/M\Z}$ as $\Z/M\Z$ modules with pairing and Galois action, they are certainly isomorphic once we disregard the Galois action and only keep the pairing. (Since $\F_l$ vector spaces with pairing are classified by the determinant of the pairing, and since $\bar{\rho}_i$ and $\Prim[l_i]$ have polarizations with the same determinant, by hypothesis 5, this is immediate.)

Next, we must study the determinant  $\det\Prim_l$, a representation of $\pi_1(T_0^{(M)})$.  Recall that we have, in \S \ref{det as psi1 psi2} above, written $\det\Prim_l$ as the product of two characters, $\det\Prim_l=\psi_1\psi_2$, where $\psi_1$ factors through $(\pi_1(T_0^{(M)}\times\Q^{ac})^{ab})_{G_{\Q(\mu_N)}}$ and $\psi_2$ through $G_{\Q(\mu_N)}$.

But $\psi_1$ maps into the image of geometric monodromy under the representation of $\pi_1(T_0^{(M)})$ determined by $\det\Prim_l$, which we know to be trivial, since geometric monodromy acts on $\Prim_l$ via matrices in $\SL$.  Thus $\psi_1$ is trivial. And $\psi_2$ was studied above in Lemma \ref{det-lemma}. We deduce that
$\det\Prim[M](\vec{h}) = \phi^{-n}\det\Prim[M] = \phi^{-n}\psi_1\psi_2 = \epsilon_l^{n(1-n)/2}$

On the other hand, by hypothesis, we have that $\det\bar{\rho}_{\Z/M\Z} = \eps_l^{n(1-n)/2}$. Thus $\det\Prim[M] = \det\bar{\rho}_{\Z/M\Z}$, and we may fix a choice of isomorphism $\eta:\det\Prim[M](\vec{h})\ra \det\bar{\rho}_{\Z/M\Z}$. (Indeed, we can choose that this isomorphism be compatible with the polarizations on $\Prim[M]$ and $\bar{\rho}_{\Z/M\Z}$, in the sense defined in the previous section. As was discussed there, to prove that this is possible it will suffice to give an isomorphism $\bar{\rho}_{\Z/M\Z}\ra\Prim[M]$ as vector spaces with pairing but without Galois action, as was done above.)

These preliminaries done, we are now on to the heart of the proof. The basic method is to consider the moduli space of tuples $(Y_t,\iota)$ where $Y_t$ is an element of the family $\fF$, and $\iota$ is an isomorphism between  $\rho_{\Z/M\Z}$ and the mod $M$ cohomology of $Y_t$ twisted by the character $\bar{\phi}_{\Z/M\Z}$. We shall show that this has a point over a large totally real field using the theorem of Moret-Bailly.

Let us proceed with the details. It will be useful to give a name to the totally real analogue of our base space $T^{(M)}_0$; so let us define $R_0^+$ to be $\Z[\mu_N,\frac{1}{N}]^+$ and $T^{(M)+}_0$ to be $\Spec R_0^+[\frac{1}{\lambda^N-1},\lambda,\frac{1}{M}]$. Now, let $\cW$ be a free $\Z/M\Z$-module of rank $n$ with a continuous action of $\Gal(\bar{F}/F)$; we can think of this as a lisse etale sheaf on $\Spec F$. In particular, we will be taking $\cW$ to be the module coming from $\bar{\rho}_{\Z/M\Z}(-\vec{h})$. Given a $T^{(M)+}_0\times_{\Z[\mu_N]^+}\Spec F^+$ scheme $S^+$, we can pull back along 
$$T^{(M)}_0\times_{\Z[\mu_N]}\Spec F \rightarrow T^{(M)+}_0\times_{\Z[\mu_N]^+}\Spec F^+$$
to get a $T^{(M)}_0\times_{\Z[\mu_N]}\Spec F$ scheme $S$, and we can consider isomorphisms between the pullback of $\cW$ to $S$ and the pullback of $\Prim[M]$ to $S$.

Consider the functor $T_\cW$:
\begin{align*}
\left\{T^{(M)+}_0\times_{\Z[\mu_N]^+}\Spec F^+\text{-schemes}\right\} & \rightarrow \mathbf{Set} \\
S^+&\mapsto\left\{\parbox{6.6cm}{Isomorphisms $\xi$ between the pull back to $S$ of $\cW$ and of $\Prim[M]$ such that the induced isomorphism $(\det\xi): (\det\Prim[M])\ra(\det \cW)$ agrees with $\eta$.} \right\} 
\end{align*}
This functor is represented by a scheme, which we will also denote by $T_\cW$. (To see this, we simply apply Proposition \ref{representable-lemma}.)

We then have the following facts:
\begin{enumerate}
\item \emph{The scheme $T_\cW$ is geometrically connected}. To see this, we must see that the geometric monodromy acts transitively on the points in a fiber of $T_\cW\ra T_0^{(M)}$. This fiber is the set of isomorphisms between the rank $n$ $\Z/M\Z$ modules $\Prim[M]$ and $\cW$ which preserve the determinant; any such isomorphism can be transformed into any other by the action of $\SL_n(\Z/M\Z)$. But we are then done by Corollary \ref{monodromy-mod-prop}.
\item \emph{If we let $S_1$ denote the set of infinite places, and define 
\begin{align*}
\Omega_{w}&=T_\cW^{(M)}(F^+_w)
\end{align*}
(where $w$ refers to an infinite place) then these sets are nonempty.} We claim that this has a point over $0\in T_0^{(M)}$. To give such a point is to give an isomorphism between the pullbacks of $\Prim[M]_0$ and $\cW$ to $\mathbb{R} \otimes_\mathbb{F^+} F$; that is, to $\C$. But once we pull back to $\C$, all Galois action information is discarded, and all that remains are spaces with a pairing---and we saw these to be isomorphic at the beginning of the proof. 
\item \emph{If we let $S_2$ denote the set of primes above the $l_i$ together with the primes of $\cL$, and define, for $\fw\in S_2$
\begin{align*}
\Omega_{\fw}&=\{  t^* \in T_0^{(M)}(F^{+nr}_\fw) \text{ above } t \in T_\cW^{(M)}(F^{+nr}_\fw) \text{ s.t. } v_{\fw}(1+t^{N}) < 0\} 
\end{align*}
then these sets are nonempty.}

To see that these sets are isomorphic, we will actually show that there is a point in the sets above lying above the point $0\in T_0^{(M)}$; that is, we will show that the Galois representations $\Prim[M]_0$ and $\cW$ become isomorphic once restricted to the absolute Galois group of $(F^+_\fw)^{nr}$; or, in other words, once restricted to inertia. To see this, first use condition (\ref{geom-prop-cdx-unram-at-cl}), which gives us what we require at $\cL$. (Both representations are unramified, so trivial on inertia) Then use condition (\ref{geom-prop-cdx-inertia}) at the places above the $l_i$, which tells us that the inertial representation of $\cW=\bar{\rho}_{\Z/M\Z}(-\vec{h})$ at a prime $\fw$ above $l_i$ is a direct sum of increasing powers of the cyclotomic character, starting with the $h(\sigma)$'th power, where $\sigma:F\ra\Qbar_l$ is the embedding corresponding to $\fw$; and condition (\ref{geom-prop-cdx-l-mod}) together with conclusion (\ref{geom-lemma-inertia}) of Proposition \ref{geom-lemma} which tells us that $\Prim_{l,0}$ takes exactly the same form.

\item \emph{If we let $S_3$ denote the set of the $\fq_j$, and define 
\begin{align*}
\Omega_{\fq_j}&=\{t^*\in T_\cW^{(M)}(\bar{F}_{\fq_j}) \text{ above } t \in T_0^{(M)}(\bar{F}_{\fq_j}) \text{ s.t. } v_{\fq_j}(t) < 0\} 
\end{align*}
then these sets are nonempty.}

Again, we will show that there is a point in the set lying above the point $0\in T_0^{(M)}$. This is immediate, since the mod $M$ representations $\cW$ and $\Prim[M]$ have finite image, and once we trivialize both by making a large local extension, they are isomorphic. 
\end{enumerate}

Thus, by the theorem of Moret-Bailly, in the version given as Proposition 2.1 of \cite{hsbt}, we can find a field $K'^+/F^+$, disjoint from $K^+/F^+$, and a point $t^*\in T_\cW(K')$ (where $K':=K'^+ F$) lying above a point $t$ in in $T_0^{(M)}(K')$ such that:
\begin{itemize}
\item All primes of $S_2$ (that is, all the primes above the primes $l_i$ and the primes of $\cL$) are unramified in $K'$. \emph{Thus we get conclusion (1).}
\item All primes of $S_1$ split completely in $K'$.  \emph{Thus we conclude that $K'^+$ is totally real and hence $K'$ is CM.}
\item For each $j$, we have $t\in \Omega_{\fq_j}$; that is, for each $j$ and for each prime $\fQ$ above $\fq_j$, we have that $v_{\fQ}(t) < 0$. Thus, by part (\ref{geom-lemma-steinberg}) of Proposition \ref{geom-lemma}, we can conclude for each $i$ that $(\Prim_{l_i,t})^{ss}$ is unramified at $\fQ$ and $(\Prim_{l_i,t})^{ss}$ has $\Frob_{\fQ}$ eigenvalues $\{\beta_{i,\fQ},\beta_{i,\fQ}\#k(\fQ),\beta_{i,\fQ}(\#k(\fQ))^2,\dots,$ $\beta_{i,\fQ}(\#k(\fQ))^{n-1}\}$ for some $\beta_{i,\fQ}$. Making a further totally-real field extension unramified at the $l_i$, we can assume that, for each $i$, all the $\beta_{i,\fQ}$ are 1 mod $l_i$.

We can then choose a character $\chi_i:\Gal(\Qbar/K')\ra\Qbar_{l_i}$ for each $i$ lifting $\bar{\chi_i}$ which is unramified at the primes of $\cL$, the primes above the $l_i$, and the $\fQ$ and which takes $\Frob_{\fQ}$ to $\beta_{i,j}^{-1}$.

Then it is immediate that $(\Prim_{l_i,t}^{ss}(\vec{h})\otimes\chi_i)(\Frob_{\fQ})$ has eigenvalues $\{1,\#k(\fQ),(\#k(\fQ))^2,\dots,(\#k(\fQ))^{n-1}\}$. \emph{Thus we get conclusion (4).}
\item We have, for each prime $\fw$ above either some $l_i$ or some element of $\cL$, that $t\in \Omega_{\fw}$; that is, $\fw(1-t^{N}) < 0$. Thus, by part \ref{geom-lemma-good-red} of Proposition \ref{geom-lemma}, $Y_t$ has good reduction at $\fw$ and $\Prim_{\fw,t}$ is crystalline. The Hodge-Tate numbers are $\{h(\sigma),h(\sigma)+1,\dots,h(\sigma)+n-1\}$ by part \ref{geom-lemma-ht} of Proposition \ref{geom-lemma}, where $\sigma:F\ra\Qbar_l$ is the embedding corresponding to $\fw$. Thus $\Prim_{\fw,t}(\vec{h})\otimes\chi_i$ is crystalline with Hodge-Tate numbers $\{0,\dots,n-1\}$. (Recall $\chi_i$ is finite order and unramified at the the $l_i$.) \emph{This gives us conclusions (\ref{geom-prop-conclude-good-reduction}) and (\ref{geom-prop-conclude-crys-ht})  of the present proposition.}
\end{itemize}
Finally, by definition of $T_W$, the point $t^*$ gives us a specified isomorphism between $\chi^{-1}_{\Z/M\Z} \otimes \bar{\rho}_{\Z/M\Z}(-\vec{h})$ and $\Prim[M]_t$; that is, we have
$$\Prim[M]_t(\vec{h})\otimes \bar{\chi}_{\Z/M\Z} =\bar{\rho}_{\Z/M\Z}$$
which is the final conclusion (\ref{geom-prop-conclude-agree-mod}) of the present proposition. This concludes the proof.
\end{proof}

We close this section with a short argument showing that the natural polarization on $\Prim[l]_{0}$ coming from Poincare duality will have determinant a square for the $l$ splitting in a certain quadratic extension of $\Q(\mu_N)$
\begin{proposition} \label{split-det-pol-prop}Suppose $N$ is a positive integer; then there is a quadratic extension $F^*(n,N)$ of $\Q(\mu_N)$ such that for any $l$ splitting in $F^*(n,N)$, the natural polarization on $\Prim[l]_{0}$ has determinant a square.
\end{proposition}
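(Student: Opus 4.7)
The plan is to identify the determinant of the polarization on $\Prim[l]_0$, modulo squares, with the reduction modulo $l$ of a fixed element $\delta \in \Q(\mu_N)^\times/(\Q(\mu_N)^\times)^2$, and then set $F^*(n,N) := \Q(\mu_N)(\sqrt{\delta})$.

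As noted in the proof of Proposition \ref{geom-lemma} part \ref{geom-lemma-inertia}, the full group $\Gamma/\Delta$ (not merely $\Gamma_W/\Delta$) acts on $H^{N-2}(Y_0)$, and Proposition I.7.4 of \cite{dmos} tells us that the corresponding eigenspaces are one-dimensional. This gives a decomposition
$$\Prim_{l,0} \cong \bigoplus_{i=1}^n L_i$$
as $G_{\Q(\mu_N)}$-representations, with each $L_i$ a one-dimensional Jacobi-sum Hecke character indexed by one of the $n$ characters $\alpha_i$ of $\Gamma/\Delta$ lying above $[v]$.

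Next, I would check that the polarization pairing $P:\Prim_{l,0}\otimes\Prim_{l,0}\to\Q_l(2-N)$, which satisfies $P(\sigma v_1,{}^c\sigma v_2)=\eps_l^{2-N}(\sigma)P(v_1,v_2)$, is diagonal in this decomposition. If $\theta_i$ denotes the character of $G_{\Q(\mu_N)}$ acting on $L_i$, the equivariance forces $\theta_i\cdot\theta_j^c=\eps_l^{2-N}$ whenever $P(L_i,L_j)\neq 0$. For Jacobi-sum Hecke characters, $\theta_i^c$ is the character attached to $-\alpha_i$, and Poincar\'e duality on $Y_0$ pairs the $\alpha$-eigenspace with the $(-\alpha)$-eigenspace; together these force $i=j$. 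Hence $P$ is diagonal with respect to basis vectors $e_i\in L_i$, with diagonal entries $c_i=P(e_i,e_i)\in\Q_l^\times$. Rescaling $e_i$ by $\lambda_i$ multiplies $c_i$ by $\lambda_i^2$, so each $c_i$ is well-defined modulo squares and the polarization determinant equals $\prod_i c_i$ modulo squares.

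The last step is to realize the $c_i$ as reductions modulo $l$ of fixed algebraic numbers. By Artin comparison together with the fact that the $\Gamma/\Delta$-action, eigenspace decomposition, and Poincar\'e pairing all descend to Betti cohomology, $H^{N-2}_{\mathrm{Betti}}(Y_0(\C),\Q)\otimes_\Q\Q(\mu_N)$ splits into one-dimensional $\Q(\mu_N)$-rational eigenspaces, and Poincar\'e duality on these takes values in $\Q(\mu_N)$. Choosing $\Q(\mu_N)$-rational basis vectors accordingly realizes the $c_i$ as reductions of elements of $\Q(\mu_N)^\times$; since $Y_0$ has good reduction at primes of $\Q(\mu_N)$ above $l$ whenever $l\nmid N$ (Proposition \ref{geom-lemma} part \ref{geom-lemma-good-red}), the pairing remains perfect mod $l$, the $c_i$ are $l$-adic units, and mod-$l$ reduction commutes with forming determinants. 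Setting $\delta:=\prod_i c_i\in\Q(\mu_N)^\times/(\Q(\mu_N)^\times)^2$ and $F^*(n,N):=\Q(\mu_N)(\sqrt{\delta})$ (or any quadratic extension if $\delta$ happens to already be a square) then finishes the argument: any prime $l$ splitting in $F^*(n,N)$ automatically splits in $\Q(\mu_N)$, and $\delta$ is a square in $\F_l^\times$ under the relevant reduction, so the polarization determinant on $\Prim[l]_0$ is a square.

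The main technical point is the descent of the decomposition and pairing to $\Q(\mu_N)$-rational data in Betti cohomology, together with the identification $\theta_i^c\leftrightarrow -\alpha_i$ for Jacobi-sum Hecke characters; both are standard but require some bookkeeping with the $l$-adic versus Betti comparison.
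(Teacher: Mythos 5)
Your proposal is correct and follows essentially the same route as the paper: identify the determinant of the polarization on $\Prim[l]_0$, via the \'etale--Betti comparison at the Fermat point $t=0$, with (the image under the chosen embedding of) a fixed class $\delta\in\Q(\mu_N)^\times$ taken modulo squares, and set $F^*(n,N)=\Q(\mu_N)(\sqrt{\delta})$. The only real difference is that the paper bypasses your eigenline diagonalization (and the attendant Jacobi-sum character bookkeeping) by taking the determinant of the pairing on the whole $v$-eigenspace of $H_{\mathrm{sing}}(Y_0\times\C,\mathcal{O}_{\Q(\mu_N)})$ at once --- integral Poincar\'e duality combined with complex conjugation --- which also keeps the integrality/unit issues (your claim that the $c_i$ are $l$-adic units requires bases adapted to the integral lattice, though only squareness in $\Q_l^\times$ is actually needed) under tighter control.
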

\begin{proof}
Choose an arbitrary infinite place of $\Q(\mu_N)$, and consider $H_\text{sing}(Y_0\times \C, \Z)$, the singular cohomology of the Fermat hypersurface $Y_0$ with integral coefficients. We can extend coefficients to $\mathcal{O}_{\Q(\mu_N)}$, getting $H_\text{sing}(Y_0\times \C, \mathcal{O}_{\Q(\mu_N)})$, which will break up into eigenspaces under the action of the group $\Gamma_W/\Delta$. Let $H_\text{sing}(Y_0\times \C, \mathcal{O}_{\Q(\mu_N)})_v$ denote the eigenspace corresponding to $v$. This will have a perfect integral Poincare duality pairing with $H_\text{sing}(Y_0\times \C, \mathcal{O}_{\Q(\mu_N)})_{(-v)}$, which is the complex conjugate of  $H_\text{sing}(Y_0\times \C, \mathcal{O}_{\Q(\mu_N)})_v$; combining Poincare duality with complex conjugation, we get a perfect integral pairing on $H_\text{sing}(Y_0\times \C, \mathcal{O}_{\Q(\mu_N)})_v$ itself, which will have a determinant, a well-defined element $\alpha$ of $\mathcal{O}_{\Q(\mu_N)}$. Let $F^*(n,N)=\Q(\mu_N,\sqrt{\alpha})$.

Now, the determinant of the Poincare duality pairing on $\Prim[l]_{0}$ is the same as the determinant of the pairing on $H_\text{\'et}(Y_0\times\C, \Z_l)$, (passing to the infinite place we chose discards the Galois action but leaves the pairing unaffected). This is, by the comparison theorem, the same as the determinant of the pairing on $H_\text{sing}(Y_0\times\C, \Z_l)$, which will be $\alpha$, considered as an element of $\Z_l$. (Recall $\alpha$ was the determinant of the pairing on $H_\text{sing}(Y_0\times\C, \mathcal{O}_{\Q(\mu_N)})$.) If $l$ splits in $F^*(n,N)$, then $\alpha$ mod $l$ is a square in $\F_l$, and hence we are done.
\end{proof}

\section{Constructing a `seed' Galois representation}\label{sec:seed}

\subsection{} In our proof strategy above, we had as step 4 the establishment of a good supply of mod $l'$ representations $\bar{r}'$ which have the powerful property that an $l'$-adic Galois representation which satisfies certain regularity properties and agrees with $\bar{r}'$ will automatically be automorphic. Our goal in this section is to state and prove a precise version of this fact.
 
\begin{proposition} \label{seed-prop}
Suppose that $F$ is a CM field, $n$ and $N$ are positive even integers, $l$ is a prime which is unramified in $F$, and that we are given a representation
$$r:\Gal (\Fbar/F) \ra \SL_n(\Z_l)$$
Suppose further that $v_q$ is a prime of $F$ above a rational prime $q\neq l$ and $\cL$ be a finite set of primes of $F$ not containing primes above $lq$. Then we can find a rational prime $l'$ and a mod $l'$ representation 
$$\bar{r}':\Gal (\Fbar/F) \ra \GSp_n(\F_{l'})$$
with multiplier $\eps_l^{1-n}$, which satisfy the following conditions:
\begin{enumerate}

\item \label{seed-prop-conclude-lprime}
$l'>C(n,N)$, $l' \equiv 1$ mod $4N$, and $l'$ splits in $F^*(n,N)$. (Recall that the constant $C(n,N)$ was defined in Corollary \ref{monodromy-mod-prop}.) 
\item \label{seed-prop-conclude-rprime-unram}
$\bar{r}'$ unramified at all primes of $\cL$ and above $l$.
\item \label{seed-prop-conclude-inertia}
For each prime $\fw$ of $F$ above $l'$, we have that 
$$\bar{r}'|_{\Gal(\Fbar_{\fw}/F_0)} \cong 1\oplus \epsilon_{l'}^{-1}\oplus\dots\oplus  \epsilon_{l'}^{1-n}$$

\item \label{seed-prop-conclude-r-at-lprime} 
$\bar{r}$ unramified at $l'$.
\item \label{seed-prop-conclude-some-things-modular}
Whenever $F'/F$ is a field extension and $r'':\Gal (\Fbar/F') \ra \GL_n(\Z_{l'})$ is  a $l'$-adic Galois representation which satisfies the following conditions:
\begin{enumerate}
\item We have that $r'' \cong (r'|_{\Gal (\Fbar/F')})$ mod $l'$.
\item $r''^c \cong r''^\vee \eps_l^{1-n}$
\item $r''$ ramifies at only finitely many primes
\item For all places $v|l$ of $F$, $r''|_{\Gal(\Fbar_v/F_v)}$ is crystalline.
\item For all $\tau\in{\Hom(F,\Qbar_l)}$ above a primes $v|l$ of $F$,
\[
\dim_{\Qbar_l} \gr^i(r'' \otimes_{\tau,F_v} \BDR)^{\Gal(\Fbar_v/F_v)} = 
\begin{cases}
     0&  (i=0,1,\dots,n-1)\\ 
     1&  (\text{otherwise})
  \end{cases}
\]
\item For some prime $\fQ$ above $v_q$, we have that $r''|_{\Gal(\Fbar_{\fQ}/F_{\fQ})}$ is unramified, with $r''|_{\Gal(\Fbar_{\fQ}/F_{\fQ})}(\Frob_{\fQ})$ eigenvalues $\{\#k(\fQ)^j: j=0,\dots,n-1\}$ (for some $\alpha\in\Qbar_l^\times$).
\end{enumerate}
then $r''$ is automorphic over $F$ of weight 0 and type $\{\Sp_n(1)\}_{\{\fQ\}}$.
\end{enumerate}
\end{proposition}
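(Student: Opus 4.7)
\medskip

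\noindent\textbf{Proof proposal.}  The plan is to mirror the ``seed'' construction of \cite{hsbt, cht}: realize $\bar{r}'$ as the mod $l'$ reduction of a Galois representation attached to an automorphic representation of $\GL_n(\A_F)$ obtained by automorphic induction from a suitably chosen algebraic Hecke character, and then invoke a modularity lifting theorem to deduce part (\ref{seed-prop-conclude-some-things-modular}).

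\emph{Step 1: Choose $l'$.}  By Chebotarev density (applied to the compositum $F\cdot F^*(n,N)\cdot F(\mu_{4N})\cdot \Fbar^{\ker\bar r}$), the set of rational primes $l'$ which are unramified in $F$, satisfy $l' > C(n,N)$, $l'\equiv 1\pmod{4N}$, split in $F^*(n,N)$, are distinct from $l$ and $q$, avoid primes of $\cL$, and at which $r$ is unramified, has positive density; fix any such $l'$.  This immediately yields conclusions (\ref{seed-prop-conclude-lprime}) and (\ref{seed-prop-conclude-r-at-lprime}).

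\emph{Step 2: Build the seed via automorphic induction.}  I would select a cyclic CM extension $M/F$ of degree $n$ in which each prime of $F$ above $l'$ splits completely, together with an algebraic Hecke character $\theta$ of $M$ satisfying $\theta\cdot(\theta\circ c) = \chi\circ N_{M/F}$ for a suitable finite-order $\chi$, with prescribed infinity type so that the Galois representation $r_\theta$ has Hodge--Tate weights $0,1,\dots,n-1$ distributed across the $n$ embeddings of $M$ lying over each place $\fw\mid l'$, with local component at the places above $v_q$ chosen so that $\Ind_M^F\theta$ is Steinberg at $v_q$, and which is unramified outside $\{l'\}\cup \{v_q\}$.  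Such a character is constructed exactly as in \cite[\S2]{hsbt}; essential conjugate self-duality with multiplier $\eps_{l'}^{1-n}$ together with the parity of $n$ forces the image of the associated Galois representation into $\GSp_n$, and one takes $\bar r' := \overline{\Ind\,r_\theta}$.  The three arithmetic properties (\ref{seed-prop-conclude-rprime-unram}) and (\ref{seed-prop-conclude-inertia}) are then read off from the splitting of $l'$ in $M$ and the chosen infinity type of $\theta$, together with the fact that $l'\equiv 1\pmod N$ makes the relevant ramification computations transparent.

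\emph{Step 3: Apply the lifting theorem.}  The hypotheses (a)--(f) imposed on $r''$ in conclusion (\ref{seed-prop-conclude-some-things-modular}) are precisely those of the modularity lifting theorem for CM fields of \cite{cht} (in the Steinberg-at-$\fQ$ version): residual equality with $\bar r_{\Ind\,\theta}|_{G_{F'}}$, essential conjugate self-duality with the right multiplier, finite ramification, crystalline with Hodge--Tate weights $\{0,\dots,n-1\}$, and Steinberg Frobenius eigenvalues $\{1,\#k(\fQ),\dots,\#k(\fQ)^{n-1}\}$ at $\fQ$.  Since $\Ind_M^F\theta$ is automorphic by construction, the lifting theorem concludes that $r''$ is automorphic of weight $0$ and type $\{\Sp_n(1)\}_{\{\fQ\}}$, as required.

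\emph{Main obstacle.}  The delicate point is verifying the ``bigness'' hypothesis for $\bar r'|_{\Gal(\Fbar/F'(\zeta_{l'}))}$ and the condition $\Fbar^{\ker\ad\bar r'}\not\supset F'(\zeta_{l'})$ needed to invoke the lifting theorem over the \emph{arbitrary} CM extension $F'/F$.  This is what forces the careful genericity requirements on $\theta$ (and the lower bound $l'>C(n,N)>n$): one must ensure the image of $\bar r'$ is large enough to survive any CM base change.  As the author signals, this technical bookkeeping has already been performed in \cite{hsbt,cht}, so in practice this step reduces to quoting the relevant lemmas after matching notation.
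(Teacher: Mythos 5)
Your overall strategy coincides with the paper's: choose $l'$ with the required congruence and splitting properties, build the seed $\bar r'$ by inducing a character of a cyclic degree-$n$ CM extension exactly as in the proof of Theorem 3.1 of \cite{hsbt} (the paper literally imports the choices of $E,M,\phi,l',\tilde M,\tilde w_{l'},w_{l'}$ and the character $\bar\theta$ from pp.~22--25 there and sets $\bar r'=I(\bar\theta)$), and then deduce conclusion (5) from a lifting theorem. Conclusions (1)--(4) are read off from that construction in both treatments.

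The gap is in your Step 3. The hypotheses (a)--(f) are \emph{not} ``precisely those'' of the general Clozel--Harris--Taylor lifting theorem (Theorem 5.2 of \cite{t-mod}, reproduced above as Theorem \ref{cht-lifting-thm}). That theorem additionally demands irreducibility of $\bar r''$, bigness of $\bar r''(\Gal(\Fbar/F'(\zeta_{l'})))$, that $\Fbar^{\ker\ad\bar r''}$ not contain $F'(\zeta_{l'})$, and---most seriously---its condition (8): residual automorphy of weight $0$ and type $\{\Sp_n(1)\}_S$ with $S\neq\emptyset$, i.e.\ a witness to the automorphy of $\bar r''$ which is an unramified twist of \emph{Steinberg} at some finite place. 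The automorphic induction $\Ind_M^{F'}\theta$ you construct is principal series at every finite place (an induction of characters can never be Steinberg at $\fQ$, so your phrase ``chosen so that $\Ind_M^F\theta$ is Steinberg at $v_q$'' cannot be arranged; what \cite{hsbt} arranges is only the residual Frobenius-eigenvalue compatibility at $\fQ$), so it does not supply condition (8). Passing from a residually-induced, non-Steinberg congruence to an automorphy conclusion of Steinberg type is exactly the ``Steinberg hypothesis'' difficulty flagged in \S\ref{sec:strat}, and it is not bookkeeping: it is the Ihara-avoidance content of \cite{t-mod}. Accordingly the paper does not invoke Theorem \ref{cht-lifting-thm} here at all; it cites Theorem 5.6 of \cite{t-mod}, a lifting theorem already specialized to lifts of $I(\bar\theta)$, whose hypotheses genuinely are (a)--(f) and which also packages the bigness and $\zeta_{l'}$ verifications for induced representations over the extension $F'$ (the issue you flag as the ``main obstacle''). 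With that citation substituted for your appeal to \cite{cht}, your argument matches the paper's; as written, your Step 3 would not go through by quoting \cite{cht} alone.
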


\begin{proof}[Proof of Proposition \ref{seed-prop}]
As mentioned above, we are lucky in that the argument we need is entirely contained in the earlier work \cite{hsbt} and \cite{t-mod}. The facts we need from \cite{t-mod} are in a readily-citable form, but the arguments we need from \cite{hsbt} are not, being part of a longer argument (roughly speaking, they are the first three pages, pp 22--25, in the proof of Theorem 3.1). We will therefore briefly describe exactly what we need to take from \cite{hsbt} and then go on to cite the results we need from the other paper.

We begin following the argument at the beginning of Theorem 3.1 of \cite{hsbt}, taking $r=1$, $n_1=n$, (indeed, from now on we will often without further comment write $X$ where \cite{hsbt} writes $X_1$, for symbols $X$), and $F_0=F$ (all other notation being the same). Choose $E,M,\phi, l',\tilde{M},\tilde{w}_{l'}, w_{l'}$ as in \cite{hsbt} (except that when we choose $l'$, we make sure that it splits in $F^*(n,N)$, as we trivially may). Construct $\psi_{l'}$ as given by the recipe in the displayed equation on page 24, and use this to construct the character $\bar{\theta}$ with the properties in the middle of page 24. Finally, construct $I(\bar{\theta})$.

We have now taken all we require from \cite{hsbt}. $I(\bar{\theta})$ is the representation $r'$ we are seeking. (It has multiplier $\eps_{l'}^{1-n}$ from the first bullet point on page 24.) 
Point 1 comes from the first two bullet points in the second set of bullet points on page 23 (and the fact that $l'$ splits in a field containing $\zeta_{N}$); and point 4 comes from the fourth bullet there.
Points 2 and 3 comes from the first three bullet points concerning $\bar{\theta}$ on page 24.

Now we will prove part 5; this is where we appeal to \cite{t-mod}. Suppose that we are given such a representation $r''$. We will show $r''$ automorphic by appeal to Theorem 5.6 of \cite{t-mod} Conditions (1), (2), (3), (4), and (5) of that theorem are met by points (a-e) respectively. Condition (6) is immediate from point (f).
\end{proof}

\section{Putting the pieces together}\label{sec:final}

\subsection{} We are now in a position to use the various pieces we have accumulated to prove the main Theorem \ref{orig-main-theorem}. We will begin by reminding ourselves of the precise statement of the theorem. In the statement at the beginning of this paper, I  tried to group the conditions in a way that will be of maximum use to users of the theorem. But as we proceed to prove the theorem it will be useful to group the conditions in a different fashion, that reflects how they will be used in the proof. We will therefore provide a restated version of the theorem with the conditions regrouped to this end. The reader should have little  difficulty in convincing themselves that the two theorems are the same. 

\begin{theorem}[Restatement of Theorem \ref{orig-main-theorem}]\label{main-theorem}
Suppose that $F$ is a Galois extension of CM fields, $n$ is a positive even integer, $N\geq n+6$ is a positive even integer such that $F$ contains $\mu_{N}$, $l$ is a prime which is unramified in $F$, and that we are given a representation
$$r:\Gal (\Fbar/F) \ra \GL_n(\Z_l)$$
Suppose further that $v_q$ is a prime of $F$ above a rational prime $q\neq l$ and $\cL$ be a finite set of primes of $F$ not containing primes above $lq$, and that the following conditions are satisfied:
\begin{description}
\item[A] \label{cdx-a}
$(r|_{\Gal(\Fbar_{v_q}/F_{v_q})})\ssm $ is unramified and  $(r|_{\Gal(\Fbar_{v_q}/F_{v_q})}) \ssm$ has Frobenius eigenvalues $1, (\#k(v_q)), \dots, (\#k(v_q))^{n-1}$

\item[B1] $r^c \cong r^\vee\eps_l^{1-n}$, with sign +1, and with some choice of polarization having determinant a square
\item[B2] $r$ ramifies only at finitely many primes.
\item[B3] For each prime $w|l$ of $F$, $r|_{\Gal(\Fbar_w/F_w)}$ is crystalline with Hodge-Tate numbers $\{0,1,\dots, n-1\}$. 
\item[B4] $\Fbar^{\ker\ad \bar{r}}$ does not contain $F(\zeta_l)$
\item[B5] Let $\bar{r}$ denote the reduction of $r$; then $\bar{r}(\Gal(\Fbar/F(\zeta_l))$ is `big' in the sense of `big image'. 
\end{description}
\begin{description}
\item[C1] We have that $q\notdiv N$
\item[C2] $l>C(n,N)$. (This constant was defined in Corollary \ref{monodromy-mod-prop}.)
\item[C3] $l\equiv 1 \mod N$, and $l$ splits in the extension $F^*(N,n)$ 
\item[C4] $r$ is unramified at all the primes of $\cL$
\item[C5] We have that: 
$$\bar{r}|_{I_{F_w}} \cong 1 \oplus \eps_l^{-1} \oplus \dots \oplus  \eps_l^{1-n} $$
\item[C6] We have $(\det \bar{r})^2\cong \eps_l^{n(1-n)}$ mod $l$

\end{description}

Then there is a CM field $F'$ containing $F$ and linearly independent from  $\Fbar^{\ker \bar{r}}$ over $F$. In addition, all primes of $\cL$ and all primes of $F$ above $l$ are unramified in $F'$. Finally, there is a prime $w_q$ of $F'$ over $v_q$ such that $r|_\Gal{\Fbar/F'}$ is automorphic of weight 0 and type $\{\Sp_n(1)\}_{\{w_q\}}$.

Moreover, if at the same time we are given $F$ we are given a CM subfield $F_0$ of $F$ which also contains $\mu_N$, then we can additionally arrange that $F'$ is Galois over $F_0$.
\end{theorem}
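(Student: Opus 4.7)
The plan is to assemble Propositions \ref{seed-prop} and \ref{geom-prop} together with a standard modularity lifting theorem. The shape of the argument is: produce an auxiliary prime $l'$ and a seed mod-$l'$ representation $\bar r'$ every $l'$-adic lift of which (satisfying natural local conditions) is automorphic; use the Dwork family to find a fibre $Y_t$ over some CM extension $F'/F$ whose mod-$l$ and mod-$l'$ cohomology (suitably twisted) simultaneously realise $\bar r|_{G_{F'}}$ and $\bar r'|_{G_{F'}}$; deduce automorphy of the $l'$-adic cohomology of $Y_t$ from the seed property; transfer it to the $l$-adic side by compatibility; and finally lift to $r|_{G_{F'}}$ via the modularity lifting theorem.

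First I would invoke Proposition \ref{seed-prop} with the given $F,n,N,l,v_q,\cL$ to obtain $l'$ and $\bar r':G_F\to\GSp_n(\F_{l'})$ with multiplier $\eps_{l'}^{1-n}$, satisfying $l'>C(n,N)$, $l'\equiv1\pmod{4N}$, $l'$ split in $F^*(n,N)$, and the requisite unramifiedness and inertial conditions; note that $\det\bar r'=\eps_{l'}^{n(1-n)/2}$ automatically since $\bar r'$ lands in $\GSp_n$. Next I would apply Proposition \ref{geom-prop} to the pair $(\bar r,\bar r')$ with primes $\{l,l'\}$, the prime $v_q$, and the set $\cL$, taking the ambient Galois CM extension $K$ to be the compositum of $F(\zeta_{ll'})$, $\Fbar^{\ker\bar r}$ and $\Fbar^{\ker\bar r'}$ (absorbing the Galois closure over $F_0$ if $F_0$ is supplied). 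Its seven input hypotheses match our data: \textbf{C2}, \textbf{C3}, \textbf{C4}, \textbf{C5}, and the polarisation condition---the square-determinant match at $l$ comes from \textbf{B1} combined with Proposition \ref{split-det-pol-prop} (using that $l$ splits in $F^*(n,N)$ by \textbf{C3}), while at $l'$ it is automatic since $\bar r'$ factors through $\GSp_n$, with $\det\bar r'$ matching the geometric determinant via Lemma \ref{det-lemma}. This produces a CM field $F'/F$ linearly disjoint from $K$, finite-order characters $\chi_l,\chi_{l'}$, and a point $t\in(\A^1\setminus\mu_N)(F')$ such that $\Prim[l]_t(\vec h)\otimes\bar\chi_l\cong\bar r|_{G_{F'}}$ and $\Prim[l']_t(\vec h)\otimes\bar\chi_{l'}\cong\bar r'|_{G_{F'}}$, while $\Prim_{\fw,t}(\vec h)\otimes\chi$ is crystalline with Hodge--Tate numbers $\{0,\dots,n-1\}$ for $\fw$ above $l$ or $l'$, and the Steinberg-style Frobenius eigenvalues $\{1,\#k(\fQ),\dots,\#k(\fQ)^{n-1}\}$ are achieved at some prime $\fQ$ above $v_q$.

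I would then feed $\Prim_{l',t}(\vec h)\otimes\chi_{l'}$ of $G_{F'}$ into Proposition \ref{seed-prop}(5): its hypotheses (a)--(f) match the conclusions just listed together with the essential conjugate self-duality from Proposition \ref{geom-lemma}(2) (suitably normalised by the $\vec h$-twist so that the similitude factor becomes $\eps_{l'}^{1-n}$). This yields automorphy of the $l'$-adic representation, corresponding to some RAESDC $\Pi$ on $\GL_n(\A_{F'})$ of weight $0$ and Steinberg type at $\fQ$. Because $\Prim_{l,t}$ and $\Prim_{l',t}$ are the $l$- and $l'$-adic realisations of the same $[v]$-piece of $H^{N-2}(Y_t)$, they fit into a strictly compatible system, so $\Pi$ also realises $\Prim_{l,t}(\vec h)\otimes\chi_l$. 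A standard modularity lifting theorem, applied to the pair $r|_{G_{F'}}$ and $\Prim_{l,t}(\vec h)\otimes\chi_l$---which share residual representation $\bar r|_{G_{F'}}$, are crystalline with Hodge--Tate numbers $\{0,\dots,n-1\}$, Steinberg at $\fQ$, and preserve the bigness and disjointness hypotheses \textbf{B4}, \textbf{B5} thanks to the linear disjointness of $F'$ from $\Fbar^{\ker\ad\bar r}$---then gives automorphy of $r|_{G_{F'}}$ of the required weight and type. The optional Galois-over-$F_0$ addendum is handled by enlarging $K$ at the outset and replacing $F'$ by its Galois closure over $F_0$.

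The main obstacle is the bookkeeping in the second step: arranging that all seven hypotheses of Proposition \ref{geom-prop} hold \emph{simultaneously} for the pair $(\bar r,\bar r')$, and in particular that the square-determinant matching of polarisations and the residual determinant formula interlock correctly with the $\vec h$-twist on both the $l$ and the $l'$ side. A secondary technical point is the compatibility transfer of automorphy from $l'$ to $l$, which uses that the $[v]$-piece of $H^{N-2}(Y_t)$ genuinely forms a strictly compatible system of $\lambda$-adic representations (not merely a family of individually well-behaved representations).
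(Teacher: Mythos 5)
Your proposal is correct and follows essentially the same route as the paper's own proof: Proposition \ref{seed-prop} to produce $(l',\bar r')$, Proposition \ref{geom-prop} applied to the pair $(\bar r,\bar r')$ (with the polarisation-determinant matching handled via \textbf{B1}, Proposition \ref{split-det-pol-prop}, and the symplectic structure of $\bar r'$ together with $l'\equiv 1\bmod 4$), the seed property to get automorphy at $l'$, transfer across the compatible system of the $[v]$-piece of $H^{N-2}(Y_t)$, and finally the lifting theorem (Theorem \ref{cht-lifting-thm}) using \textbf{A}, \textbf{B1}--\textbf{B5} and the linear disjointness of $F'$. The only differences are minor bookkeeping choices (e.g.\ your slightly larger avoidance field $K$), which do not change the argument.
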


I will also reproduce the lifting theorem which I need to apply from \cite{cht}: we have to refer constantly to the conditions of this theorem, and so it is convenient to have a statement of the theorem to hand.
\begin{theorem}[Theorem 5.2 of \cite{t-mod}] \label{cht-lifting-thm}
Let $F$ be an imaginary CM field and let $F^+$ be its maximal totally real subfield. Let $n\in \Z_{\geq 1}$ and let $l>n$ be a prime which is unramified in $F$. Let
$$r: \Gal(\Fbar/F)\ra GL_n(\Qbar_l)$$
be a continuous irreducible representation with the following properties. Let $\bar{r}$ denote the semisimplification of the reduction of $r$. Suppose that:
\begin{enumerate}
\item $r^c \cong r^\vee \eps_l^{1-n}$
\item $r$ is unramified at all but  finitely many primes.
\item For all places $v|l$ of $F$, $r|_{\Gal(\Fbar_v/F_v)}$ is crystalline.
\item There is an element $a\in (\Z^n)^{\Hom(F,\Qbar_l)}$ such that
\begin{itemize}
\item for all $\tau\in\Hom(F,\Qbar_l)$ we have either
$l-1-n\geq a_{\tau,1}\geq\dots\geq a_{\tau,n}\geq 0$
or
$l-1-n\geq a_{\tau c,1}\geq\dots\geq a_{\tau c,n}\geq 0$
\item for all $\tau\in\Hom(F,\Qbar_l)$ and all $i=1,\dots,n$ we have $a_{\tau c,i} = -a_{\tau,n+1-i}$
\item for all $\tau\in\Hom(F,\Qbar_l)$ above a prime $v|l$ of $F$
\[
\dim_{\Qbar_l} \gr^i(r \otimes_{\tau,F_v} \BDR)^{\Gal(\Fbar_v/F_v)} = 
\begin{cases}
     0&  i=a_{\tau,j}+n-j\,\text{(for some $j$)}\\ 
     1&  \text{(otherwise)}
  \end{cases}
\]
\end{itemize}
\item Let $r_l$ denote the local Langlands correspondance, normalized as in Proposition 4.3.1 of \cite{cht}, and $|\,\,|$ denote the modulus character. There is a non-empty finite set $S$ of places of $F$ not dividing $l$ and for each $v\in S$ a square integrable representation $\rho_v$ of $GL_n(F_v)$ over $\Qbar_l$ such that
$$(r|_{\Gal(\Fbar_v/F_v)})\ssm = r_l(\rho_v)^\vee(1-n)\ssm$$
If $\rho_v = \Sp_{m_v}(\rho'_v)$ then set
$$\tilde{r}_v=r_l((\rho'_v)^\vee |\,|^{(n/m_v - 1)(1-m_v)/2})$$
Note that $r|_{Gal(\Fbar_v/F_v)}$ has a unique filtration $\Fil^j_v$ such that 
$$\gr^j_v r|_{\Gal(\Fbar_v/F_v)} \cong \tilde{r}_v \eps^j$$ 
for $j = 0, \dots, m_v-1$ and equals $(0)$ otherwise. We assume that $\tilde{r}_v$ has irreducible reduction $\bar{r}_v$ . Then $\bar{r}|_{\Gal(\Fbar_v/F_v)}$ inherits a filtration $\overline{\Fil}^j_v$ with
$$\gr^j_v \bar{r}|_{\Gal(\Fbar_v/F_v)} \cong \bar{r}_v \eps^j$$ 
for $j = 0, ..., m_v-1$. 
\item $\Fbar^{\ker\ad \bar{r}}$ does not contain $F(\zeta_l)$
\item Let $r'$ denote the extension of $r$ to a continuous homomorphism $\Gal(\Fbar/F)\ra \cG_n(\Qbar_l)$, where $\cG_n$ is the group defined at the beginning of \cite{cht}; then $\bar{r}'(\Gal(\Fbar/F(\zeta_l))$ is `big'.\footnote{In the original statement of this theorem, the condition given is that `$\ad\bar{r}'(\Gal(\Fbar/F(\zeta_l))$ is big'. While the notion of `big image' is defined for a representation $r$, it is basically a property of the adjoint representation. Thus people often refer to $\ad \bar{r}$ as being big when they mean $\bar{r}$ is big. I will try to consistently use the $\bar{r}$ notation in this paper however.}
\item The representation $\bar{r}$ is irreducible and automorphic of weight $a$ and type $\{\rho_v\}_{v \in S}$ with $S \neq \emptyset$
\end{enumerate}

\end{theorem}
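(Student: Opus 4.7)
\medskip

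The plan is to carry out a Taylor--Wiles--Kisin patching argument, in the form developed for $\mathrm{GL}_n$ over CM fields by Clozel--Harris--Taylor \cite{cht} and refined by Taylor. I would first recast the problem as an $R = T$ statement. Because $r$ is assumed polarized (condition (1)), one extends $r$ to a homomorphism $r':G_{F^+}\to \cG_n(\Qbar_l)$ using the formalism of \cite{cht}, and works throughout with polarized deformations of $\bar{r}'$. One then sets up a global deformation problem $\mathcal{S}$ specifying: at places $v\mid l$, crystalline deformations with the prescribed labelled Hodge--Tate weights $a$ (using Kisin's moduli of Breuil--Kisin modules to construct an integral local deformation ring whose generic fiber parametrises these); at places $v\in S$, deformations of Steinberg type with the prescribed filtration $\mathrm{Fil}^j_v$ (here one uses that $\tilde r_v$ has irreducible residual reduction, so that the Steinberg lifting ring is formally smooth of the expected dimension); and unramified or minimal conditions elsewhere. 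Write $R_{\mathcal{S}}$ for the resulting universal deformation ring.

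On the automorphic side, the hypothesis that $\bar{r}$ is automorphic of weight $a$ and type $\{\rho_v\}_{v\in S}$ with $S\neq\emptyset$ means it arises from an automorphic representation $\Pi_0$ of $\mathrm{GL}_n(\A_F)$ that descends (via unitary base change, using that $S\neq\emptyset$ gives the required discrete-series condition at a finite place) to an automorphic form on a compact-at-infinity unitary group $U/F^+$ that is a form of $\mathrm{GL}_n$ split at places above $S$. The associated space of algebraic modular forms with values in the appropriate integral local system, localised at the maximal ideal $\mathfrak{m}_{\bar r}$, carries a Hecke algebra $T_{\mathcal{S}}$ and gives a natural surjection $R_{\mathcal{S}}\twoheadrightarrow T_{\mathcal{S}}$. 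The goal is to show this surjection is an isomorphism after inverting $l$, because then the $\Qbar_l$-point of $R_{\mathcal{S}}$ corresponding to $r$ lies on $T_{\mathcal{S}}$ and thus comes from an automorphic form of the prescribed type.

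To prove $R_{\mathcal{S}} \isoto T_{\mathcal{S}}$ I would run the Taylor--Wiles--Kisin patching argument. Using that $\bar{r}'(\Gal(\Fbar/F(\zeta_l)))$ is big, one produces for each $N\geq 1$ a set $Q_N$ of Taylor--Wiles primes at which $\bar r$ is unramified with distinct Frobenius eigenvalues, such that (a) the Selmer-dual Galois cohomology group vanishes, giving a presentation of $R_{\mathcal{S}_{Q_N}}$ over $R_{\mathcal{S}}$ by exactly the expected number of generators and relations, and (b) the corresponding Hecke module $M_{Q_N}$ is free over a group ring $\cO[\Delta_{Q_N}]$ after localisation. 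Patching the $(R_{\mathcal{S}_{Q_N}},M_{Q_N})$ in a fixed formal power-series ring $S_\infty$ produces a finite $R_\infty$-module $M_\infty$ which is also finite free over $S_\infty$; a dimension count matching $\dim R_\infty = \dim S_\infty$ (this is where the precise local deformation ring dimensions from Kisin at $l$ and from the Steinberg computation at $S$ are critical) forces $R_\infty$ to act faithfully on $M_\infty$, and then unpatching gives $R_{\mathcal{S}}\isoto T_{\mathcal{S}}$.

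The main obstacle is the dimension / freeness input at the non-minimal places. At $l$, one needs the integral $l$-adic Hodge theory of Kisin to show that the crystalline deformation ring (with the specified labelled weights $a_{\tau,j}$, which satisfy $a_{\tau,i}+a_{\tau c,n+1-i}=0$, so are compatible with the polarisation) has the right generic dimension; this uses the bound $l-1-n \geq a_{\tau,1}\geq\dots\geq 0$ (a Fontaine--Laffaille-type regime) so that one avoids wildly ramified local issues. At places in $S$, irreducibility of $\bar r_v$ is exactly what makes the Steinberg deformation ring formally smooth and reduces the local analysis to a manageable twist computation. Once these local pieces are in hand, the global patching machinery of \cite{cht} applies essentially verbatim to deliver the conclusion.
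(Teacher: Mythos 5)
Two things. First, a point of comparison: the paper does not prove this statement at all --- it is quoted verbatim (as Theorem 5.2 of \cite{t-mod}) purely so that its hypotheses can be referred to during the proof of the main theorem. So the relevant comparison is between your sketch and Taylor's published proof, not anything internal to this paper.

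Second, judged on its own terms your outline has the right general skeleton (extension of $r$ to $\cG_n$-valued polarized deformations as in \cite{cht}, descent to a compact unitary group using $S\neq\emptyset$, Taylor--Wiles--Kisin patching powered by the bigness hypothesis), but the sentence ``the global patching machinery of \cite{cht} applies essentially verbatim'' conceals exactly the difficulty that \cite{t-mod} exists to solve, and the step where you say the dimension count ``forces $R_\infty$ to act faithfully on $M_\infty$'' is where the argument breaks. The standard patching argument only shows that the support of $M_\infty$ is a union of irreducible components of $\Spec R_\infty$; to deduce automorphy of $r$ you must show that the particular components through the point corresponding to $r$ --- crystalline components at $l$, Steinberg-type components at $S$, and components allowing ramification of $r$ at primes where the automorphic form supplying hypothesis (8) may be unramified --- actually occur in the support. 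In the minimal case this can be arranged, but in general \cite{cht} could only do it conditionally on Ihara's lemma. Taylor's proof of the theorem you are quoting removes that condition by the ``Ihara avoidance'' argument: after soluble base change one introduces auxiliary places split in $F$ and congruent to $1$ mod $l$, compares two deformation problems that differ only in the local condition there (unipotent ramification versus fixed distinct finite-order characters), uses the facts that the two local lifting rings have the same reduction mod $l$ and that the second has irreducible components of the expected dimension in characteristic zero, and plays the two patched modules against each other to conclude that the patched module has full support; the output is an isomorphism with the reduced deformation ring, $R^{\mathrm{red}}\cong T$, which suffices. Without this ingredient (or Ihara's lemma) your proposal does not prove the theorem in the generality stated.
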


\begin{proof}[Proof of Theorem \ref{main-theorem}]
This is now a simple matter of combining the results we have accumulated according to our original strategy. (Note that the numbering of the steps here does not correspond directly to the numbering in the strategy.) Figure \ref{big-fig} may be of some help in understanding how the parts of the proof fit together. 
\begin{figure}
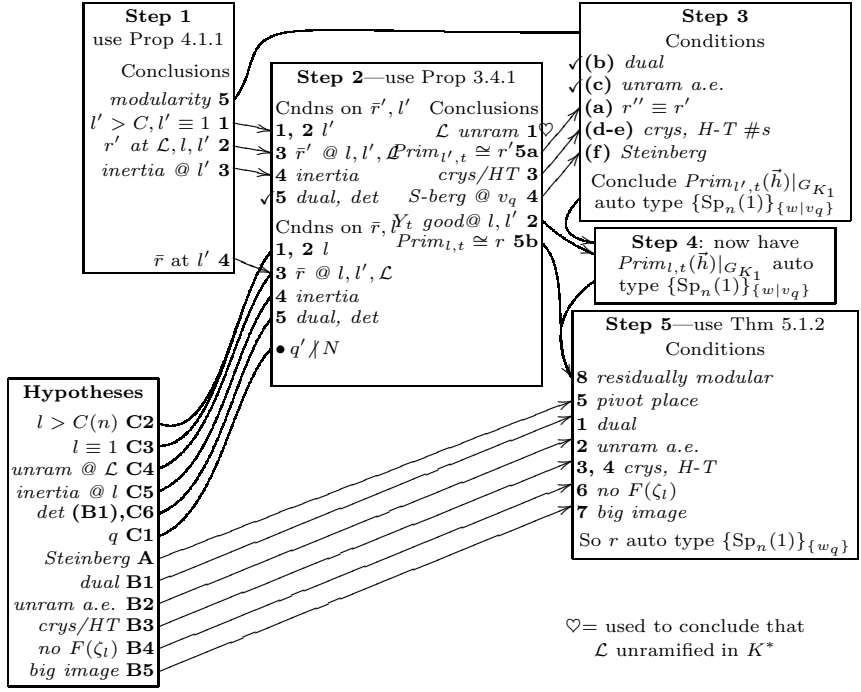

\def\objectstyle{\scriptstyle}
\def\labelstyle{\scriptstyle} 
 \xy
 (90,3)*{\text{$\heartsuit$= used to conclude that}};
 (90,0)*{\text{$\cL$ unramified in $K^*$}};
(75,45)*{}="S6tl"; (113,45)*{}="S6tr"; (75,12)*{}="S6bl";  (113,12)*{}="S6br"; 
(94,43)*{\text{{\bf Step 5}---use Thm \ref{cht-lifting-thm}}};
(94,40)*{\text{{Conditions}}};
(90,36)*{\text{\makebox[2.9cm][l]{\bf 8 \it residually modular}}};
(90,33)*{\text{\makebox[2.9cm][l]{\bf 5 \it pivot place}}};
(90,30)*{\text{\makebox[2.9cm][l]{\bf 1 \it dual}}};
(90,27)*{\text{\makebox[2.9cm][l]{\bf 2 \it unram a.e.}}};
(90,24)*{\text{\makebox[2.9cm][l]{\bf 3, 4 \it crys, H-T}}};
(90,21)*{\text{\makebox[2.9cm][l]{\bf 6 \it no $F(\zeta_l)$}}};
(90,18)*{\text{\makebox[2.9cm][l]{\bf 7 \it big image}}};
(94,14)*{\text{So $r$ auto type $\{\Sp_n(1)\}_{\{w_q\}}$}};
{\ar@/_.7pc/ (78,49)*{}; (75,36)*{}}; 
(71,54)*{}= "S2conc5b";(75,36)*{}= "S6cdx8";
"S2conc5b"; "S6cdx8" **\crv{(75,50)&(72,40)}; 
(71,24)*{}= "S3conc5";
(20,-3)*{} = "Hb5";(20,-6)*{} = "Hb6";
(112,21)*{}= "S6cdx6";(112,18)*{}= "S6cdx7";
{\ar (20,12)*{};(75,33)*{}};
{\ar (20,9)*{};(75,31)*{}};
{\ar (20,6)*{};(75,28)*{}};
{\ar (20,3)*{};(75,25)*{}};
{\ar (20,0)*{};(75,22)*{}};
{\ar (20,-3)*{};(75,19)*{}};
(78,56)*{}="S5tl"; (110,56)*{}="S5tr"; (78,46)*{}="S5bl";  (110,46)*{}="S5br"; 
(94,54)*{\text{{\bf Step 4}: now have}};
(94,51)*{\text{$\Prim_{l,t}(\vec{h})|_{G_{K_1}}$ auto}};
(94,48)*{\text{type $\{\Sp_n(1)\}_{\{w|v_q\}}$}};
{\ar@/_.7pc/ (76,60)*{}; (78,54)*{}}; 
{\ar@/_.1pc/ (71,57)*{}; (78,52.5)*{}}; 
(76,86)*{}="S4tl"; (112,86)*{}="S4tr"; (76,57)*{}="S4bl";  (112,57)*{}="S4br"; 
(94,84)*{\text{{\bf Step 3}}};
(94,81)*{\text{{Conditions}}};
(94,78)*{\text{\makebox[3.5cm][l]{\bf (b) \it dual}}};
(75.5,78)*{\checkmark};
(94,75)*{\text{\makebox[3.5cm][l]{\bf (c) \it unram a.e.}}};
(75.5,75)*{\checkmark};
(94,72)*{\text{\makebox[3.5cm][l]{\bf (a) \it $r''\equiv r'$}}};
(94,69)*{\text{\makebox[3.5cm][l]{\bf (d-e) \it crys, H-T $\#$s}}};
(94,66)*{\text{\makebox[3.5cm][l]{\bf (f) \it Steinberg}}};
(94,62)*{\text{Conclude $\Prim_{l',t}(\vec{h})|_{G_{K_1}}$}};
(94,59)*{\text{auto type $\{\Sp_n(1)\}_{\{w|v_q\}}$}};
(30,73)*{} = "S1cl10"; (76,82)*{}= "S4maincond";
"S1cl10"; "S4maincond" **\crv{(33,77)&(30,84)&(50,82)}; 
{\ar (71,66)*{}; (76,72)*{}}; 
{\ar (71,63)*{}; (76,69)*{}}; 
{\ar (71,60)*{}; (76,66)*{}}; 
(35,35)*{}="S2tl"; (71,35)*{}="S2tr"; (35,78)*{}="S2bl";  (71,78)*{}="S2br"; 
(53,76)*{\text{{\bf Step 2}---use Prop \ref{geom-prop}}};
(53,72)*{\text{\makebox[3.5cm][r]{Conclusions}}};
(53,69)*{\text{\makebox[3.5cm][r]{\it $\cL$ unram \bf \ref{geom-prop-conclude-unram-and-split}}}};
(71.5,69)*{\heartsuit};
(53,66)*{\text{\makebox[3.5cm][r]{\it $\Prim_{l',t} \cong r'$\bf \ref{geom-prop-conclude-agree-mod}a}}};
(53,63)*{\text{\makebox[3.5cm][r]{\it crys/HT \bf \ref{geom-prop-conclude-crys-ht}}}};
(53,60)*{\text{\makebox[3.5cm][r]{\it S-berg @ $v_q$ \bf \ref{geom-prop-conclude-steinberg}}}};
(53,57)*{\text{\makebox[3.5cm][r]{\it $Y_t$ good@ $l,l'$ \bf \ref{geom-prop-conclude-good-reduction}}}};
(53,54)*{\text{\makebox[3.5cm][r]{\it $\Prim_{l,t} \cong r$ \bf \ref{geom-prop-conclude-agree-mod}b}}};
(53,72)*{\text{\makebox[3.5cm][l]{Cndns on $\bar{r}', l'$}}};
(53,69)*{\text{\makebox[3.5cm][l]{\bf \ref{geom-prop-cdx-l-cn}, \ref{geom-prop-cdx-l-mod} \it $l'$}}};
(53,66)*{\text{\makebox[3.5cm][l]{\bf \ref{geom-prop-cdx-unram-at-cl} \it $\bar{r}'$ @ $l,l',\cL$}}};
(53,63)*{\text{\makebox[3.5cm][l]{\bf \ref{geom-prop-cdx-inertia} \it inertia}}};
(53,60)*{\text{\makebox[3.5cm][l]{\bf \ref{geom-prop-cdx-dual} \it dual, det}}};
(34.5,60)*{\checkmark};
(53,56)*{\text{\makebox[3.5cm][l]{Cndns on $\bar{r},l$}}};
(53,53)*{\text{\makebox[3.5cm][l]{\bf \ref{geom-prop-cdx-l-cn}, \ref{geom-prop-cdx-l-mod} \it l}}};
(53,50)*{\text{\makebox[3.5cm][l]{\bf \ref{geom-prop-cdx-unram-at-cl}  \it $\bar{r}$ @ $l,l',\cL$}}};
(53,47)*{\text{\makebox[3.5cm][l]{\bf \ref{geom-prop-cdx-inertia} \it inertia}}};
(53,44)*{\text{\makebox[3.5cm][l]{\bf \ref{geom-prop-cdx-dual} \it dual, det}}};
(53,40)*{\text{\makebox[3.5cm][l]{\bf  $\bullet\, q'\notdiv N$}}};
{\ar (30,70)*{}; (35,69)*{}}; 
{\ar (30,67)*{}; (35,66)*{}}; 
{\ar (30,64)*{}; (35,63)*{}}; 
{\ar (30,52)*{}; (35,50)*{}}; 
(20,30)*{} = "Hc1";(20,27)*{} = "Hc2";(20,24)*{} = "Hc4";(20,21)*{} = "Hc5";(20,18)*{} = "Hb1y";
(20,15)*{} = "Hcc";
(35,53)*{} = "S3cx12";(35,50)*{} = "S3cx3";(35,47)*{} = "S3cx4";(35,44)*{} = "S3cx6";
(35,40)*{} = "S3cq";
"Hc1"; "S3cx12" **\crv{(28,27)&(30,47)}; 
"Hc2"; "S3cx12" **\crv{(28,27)&(30,47)}; 
"Hc4"; "S3cx3" **\crv{(28,27)&(30,44)}; 
"Hc5"; "S3cx4" **\crv{(28,24)&(30,41)}; 
"Hb1y"; "S3cx6" **\crv{(28,21)&(30,38)}; 
"Hcc"; "S3cq" **\crv{(28,18)&(30,35)}; 
(10,50)*{}="S1tl"; (30,50)*{}="S1tr"; (10,86)*{}="S1bl";  (30,86)*{}="S1br"; 
(20,84)*{\text{\bf Step 1}};
(20,81)*{\text{use Prop \ref{seed-prop}}};
(20,77)*{\text{\makebox[1.9cm][r]{Conclusions}}};
(20,73)*{\text{\makebox[1.9cm][r]{{\it modularity} {\bf  \ref{seed-prop-conclude-some-things-modular}}}}};
(20,70)*{\text{\makebox[1.9cm][r]{{\it $l'>C,l'\equiv 1$} {\bf  \ref{seed-prop-conclude-lprime}}}}};
(20,67)*{\text{\makebox[1.9cm][r]{{\it $r'$ at $\cL,l,l'$} {\bf  \ref{seed-prop-conclude-rprime-unram}}}}};
(20,64)*{\text{\makebox[1.9cm][r]{{\it inertia @ $l'$} {\bf  \ref{seed-prop-conclude-inertia}}}}};
(20,52)*{\text{\makebox[1.9cm][r]{{$\bar{r}$ at $l'$} {\bf  \ref{seed-prop-conclude-r-at-lprime}}}}};
(0,-5)*{}="Htl"; (20,-5)*{}="Htr"; (0,36)*{}="Hbl";  (20,36)*{}="Hbr"; 
(10,34)*{\text{\bf Hypotheses}};
(10,30)*{\text{\makebox[1.9cm][r]{{$l>C(n)$} {\bf C2}}}};
(10,27)*{\text{\makebox[1.9cm][r]{{$l\equiv 1$} {\bf C3}}}};
(10,24)*{\text{\makebox[1.9cm][r]{{\it unram @ $\cL$} {\bf C4}}}};
(10,21)*{\text{\makebox[1.9cm][r]{{\it inertia @ $l$} {\bf C5}}}};
(10,18)*{\text{\makebox[1.9cm][r]{{\it det} {\bf (B1),C6}}}};
(10,15)*{\text{\makebox[1.9cm][r]{{\it $q$} {\bf C1}}}};
(10,12)*{\text{\makebox[1.9cm][r]{{\it Steinberg} {\bf A}}}};
(10,9)*{\text{\makebox[1.9cm][r]{{\it dual} {\bf B1}}}};
(10,6)*{\text{\makebox[1.9cm][r]{{\it unram a.e.} {\bf B2}}}};
(10,3)*{\text{\makebox[1.9cm][r]{{\it crys/HT} {\bf B3}}}};
(10,0)*{\text{\makebox[1.9cm][r]{{\it no $F(\zeta_l)$} {\bf B4}}}};
(10,-3)* {\text{\makebox[1.9cm][r]{{\it big image} {\bf B5}}}};
"Htl"; "Htr" **\dir{-};
"Htr"; "Hbr" **\dir{-};
"Hbr"; "Hbl" **\dir{-};
"Hbl"; "Htl" **\dir{-};
"S1tl"; "S1tr" **\dir{-};
"S1tr"; "S1br" **\dir{-};
"S1br"; "S1bl" **\dir{-};
"S1bl"; "S1tl" **\dir{-};
"S2tl"; "S2tr" **\dir{-};
"S2tr"; "S2br" **\dir{-};
"S2br"; "S2bl" **\dir{-};
"S2bl"; "S2tl" **\dir{-};
"S4tl"; "S4tr" **\dir{-};
"S4tr"; "S4br" **\dir{-};
"S4br"; "S4bl" **\dir{-};
"S4bl"; "S4tl" **\dir{-};
"S5tl"; "S5tr" **\dir{-};
"S5tr"; "S5br" **\dir{-};
"S5br"; "S5bl" **\dir{-};
"S5bl"; "S5tl" **\dir{-};
"S6tl"; "S6tr" **\dir{-};
"S6tr"; "S6br" **\dir{-};
"S6br"; "S6bl" **\dir{-};
"S6bl"; "S6tl" **\dir{-};
\endxy
\caption{Logical structure of argument for Theorem \ref{main-theorem}\label{big-fig}}
\end{figure}

\emph{Step 1:} Given an $r$ as in the theorem, we can immediately apply Proposition \ref{seed-prop}, constructing a rational prime $l'$ and an $l'$-adic representation $r'$, satisfying the conclusions 1--5.

\emph{Step 2:}  We now apply Proposition \ref{geom-prop} taking $s=1$, $\fq_1=v_q$ and $r=2, l_1=l, l_2=l'$, and $K'=\Kbar^{\ker \bar{r}}$; and using $\bar{\rho}_1=\bar{r}$ and $\bar{\rho}_2=\bar{r}'$ (the semisimplification of the reduction of $r'$). 
Conditions 
\ref{geom-prop-cdx-l-cn}, \ref{geom-prop-cdx-l-mod}, \ref{geom-prop-cdx-unram-at-cl}, \ref{geom-prop-cdx-inertia} on $\bar{\rho}_1=\bar{r}$ and $l_1=l$ are satisfied by hypotheses 
C2, C3, C4, C5 respectively, together with conclusion 4 of step 1 which controls $\bar{r}$ ar $l'$. Next, I claim that the determinant of the polarization on $\Prim[l]_v$ matches the determinant of the polarization on $r$; this is from hypothesis B1, and the fact that $l$ splits in $F^*(n,N)$ which tells us $\Prim[l]_v$ has polarization with determinant a square by Proposition \ref{split-det-pol-prop}. Finally, we can use condition C6 to get the rest of condition \ref{geom-prop-cdx-dual}. 

Conditions \ref{geom-prop-cdx-l-cn} and \ref{geom-prop-cdx-l-mod} on $l_2=l'$ are satisfied by conclusion \ref{seed-prop-conclude-lprime} of Proposition \ref{seed-prop} applied in step 1; 
and conditions \ref{geom-prop-cdx-unram-at-cl} and \ref{geom-prop-cdx-inertia} on $\bar{\rho}_2=\bar{r}'$ are met respectively by conclusions 
\ref{seed-prop-conclude-rprime-unram}, \ref{seed-prop-conclude-inertia} of the same proposition.  Finally, condition \ref{geom-prop-cdx-dual} on $\bar{r'}$ is met since $r'$ is symplectic with multiplier $\eps^{1-n}$ (note that this automatically means that the determinant of the polarization will be -1, which is a square since $l'\equiv 1$ mod 4; this will match $\Prim[l']$ since $l'$ splits in $F^*(n,N)$).

We are left with a CM field $K_1$, a point $t\in T_0^{(M)}(K_1)$, and characters $\chi_l$ and $\chi_{l'}$
 satisfying the conclusions 1--5 of Proposition \ref{geom-prop}.

\emph{Step 3:} I claim that $(\Prim_{l',t}(\vec{h})\otimes\chi_l)|_{G_{K_1}}$, is automorphic of weight 0 and type $\{\Sp_n(1)\}_{\{\fQ|v_q\}}$. To check this, in the light of conclusion \ref{seed-prop-conclude-some-things-modular} of the Proposition in step 1, it suffices to check the conditions a--f given there. Conditions (a) and (f) are met by conclusions \ref{geom-prop-conclude-agree-mod}, \ref{geom-prop-conclude-steinberg} of the proposition in step 2, and conditions (d) and (e) are met by conclusion \ref{geom-prop-conclude-crys-ht}. Condition (b) is a simple geometric fact about our family established in Proposition \ref{geom-lemma} (point (\ref{geom-lemma-dual})). Finally, condition (c) is automatic since $\Prim_{l',t}$ is a piece of the cohomology of a variety and $\chi_l$ is finite order.

We can immediately deduce that  $\Prim_{l',t}(\vec{h})|_{G_{K_1}}$ itself is automorphic.

\emph{Step 4:} Since $\Prim_{l',t}$ and $\Prim_{l,t}$ are part of a compatible system, which are crystalline/unramified (as appropriate) at $l$ and $l'$ (because of conclusion \ref{geom-prop-conclude-good-reduction} of the proposition applied in step 2), the fact that $\Prim_{l',t}(\vec{h})|_{G_{K_1}}$ is automorphic implies $\Prim_{l,t}(\vec{h})|_{G_{K_1}}$ is also automorphic (of weight 0 and type $\{\Sp_n(1)\}_{\{\fQ|v_q\}}$).

\emph{Step 5:} I claim that $r|_{G_{K_1}}$, is modular of weight 0 and type $\{\Sp_n(1)\}_{\{w_q\}}$. We shall see this using Theorem \ref{cht-lifting-thm}. (Note that in applying this theorem we use the fact that $l>n$.) Conditions 1 and 2 are met by hypotheses B1, B2 respectively. Conditions 3 and 4 are both satisfied by condition B3, with $a=0$. For condition 5, hypothesis A (and the fact $w_q|v_q$) gives us what we need. Conditions 6 and 7 are met by hypotheses B4, B5 respectively. (For condition 7, we also use the fact that the field extension we made in step 2 was linearly disjoint from the fixed field of the kernel of $\bar{r}$.) Condition 8 comes from the fact that $(\Prim_{l,t'}(\vec{h})\otimes\chi'_l)|_{G_{K_1}} \equiv r$ mod $l$.

This completes the proof of Theorem \ref{main-theorem}.
\end{proof}

\section{A twisting argument}\label{sec:twisting}
\subsection{} In this section, we will briefly sketch an argument showing that condition (7) of our main theorem, Theorem \ref{main-theorem}, can be relaxed under an assumption that $\Q_l$ contains `enough roots of unity'. In particular, we shall sketch proof that:
\begin{corollary} \label{cor:twist}
Let $n,N$ be positive integers with $N\geq n+5$, $n$ even, and $N$ odd, and let $C(n,N)$ and $F^*(n,N)$ be as in Theorem \ref{orig-main-theorem}. Suppose then that $F$ is a CM field containing $\mu_N$ and $\mu_n$, and $l$ is a rational prime satisfying those conditions placed on it in Theorem \ref{orig-main-theorem}.

Suppose we are given a representation $r$ satisfying all the conditions placed on $r$ in Theorem \ref{orig-main-theorem} except condition (7) on the determinant of $\rbar$ need not hold. Suppose in addition that $\Q_l$ contains $\Q(\mu_{Mn})$, where $M$ is the order of the character $(\det r)\eps_l^{n(n-1)/2}$. (We know that his character has finite order since condition (4) tells us that $\det r$ is crystalline with Hodge-Tate number $n(n-1)/2$.) Then the conclusion of Theorem \ref{orig-main-theorem} still holds.
\end{corollary}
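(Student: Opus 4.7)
The plan is to reduce to Theorem~\ref{orig-main-theorem} by a character twist: I construct a finite-order character $\chi: \Gal(\Fbar/F)\to\Z_l^\times$ such that $r\otimes\chi$ verifies all hypotheses of the main theorem (in particular condition~(7)), apply the main theorem to $r\otimes\chi$ to produce a CM field $F'\supset F$ over which the twist becomes automorphic of weight $0$ and type $\{\Sp_n(1)\}_{\{w_q\}}$, and then untwist by $\chi^{-1}$. Because twisting by the finite-order Hecke character $\chi^{-1}$ preserves automorphy, the conclusion for $r|_{\Gal(\Fbar/F')}$ will follow.

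Set $\alpha := (\det r)\,\eps_l^{n(n-1)/2}$. Condition~(4) on $r$ makes $\det r$ crystalline with Hodge--Tate weight $n(n-1)/2 = \sum_{i=0}^{n-1} i$ at each embedding of $F$ above $l$, so $\alpha$ has Hodge--Tate weight zero at every such embedding; hence $\alpha$ is a finite-order character, of order $M$ by hypothesis, and is automatically unramified at every prime of $F$ above $l$. Taking determinants of $r^c\cong r^\vee\eps_l^{1-n}$ yields $\alpha^c=\alpha^{-1}$. Using the hypothesis $\Q_l\supset\Q(\mu_{Mn})$ to embed $\mu_{Mn}\hookrightarrow\Z_l^\times$, I aim to produce a continuous character
\[
\chi : \Gal(\Fbar/F) \longrightarrow \mu_{Mn}\subset\Z_l^\times
\]
satisfying $\chi^n=\alpha^{-1}$, $\chi^c=\chi^{-1}$, unramified at every prime of $F$ above $l$, at the primes of $\cL$, and at $v_q$, and with $\chi(\Frob_{v_q})=1$.

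Granted such a $\chi$, the twist $r':=r\otimes\chi$ verifies all hypotheses of Theorem~\ref{orig-main-theorem}: condition~(7) holds because $\det(\overline{r\otimes\chi}) = \bar{\alpha}\cdot\eps_l^{-n(n-1)/2}\cdot\bar{\chi}^n = \eps_l^{n(1-n)/2}$; (2) is preserved since $\chi\chi^c=1$; (3) and (11) are preserved because the Gram matrix of the polarization pairing on $\bar{r}\otimes\bar{\chi}$ coincides with that of $\bar{r}$; (8) and (9) are preserved because $\ad(\bar{r}\otimes\bar{\chi})=\ad\bar{r}$; and (1), (4), (5), (6), (10) follow from the prescribed local behavior of $\chi$. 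The main theorem applied to $r'$ yields a CM field $F'$ with a prime $w_q|v_q$ over which $r'|_{\Gal(\Fbar/F')}$ is automorphic of the desired type; untwisting by $\chi^{-1}|_{\Gal(\Fbar/F')}$ yields the same conclusion for $r|_{\Gal(\Fbar/F')}$. The required linear disjointness of $F'$ from $\Fbar^{\ker\bar{r}}$ (rather than from $\Fbar^{\ker(\overline{r\otimes\chi})}$ as directly supplied) is arranged by enlarging the disjointness input in the underlying application of Proposition~\ref{geom-prop} to include $\Fbar^{\ker\bar{\chi}}$, a finite abelian extension of $F$; the equality $\Fbar^{\ker(\overline{r\otimes\chi})}\cdot\Fbar^{\ker\bar{\chi}}=\Fbar^{\ker\bar{r}}\cdot\Fbar^{\ker\bar{\chi}}$ then shows both disjointness conditions hold.

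The principal technical obstacle is the construction of $\chi$. When $\gcd(n,M)=1$ an explicit choice works: set $\chi:=\alpha^{-k}$ with $nk\equiv 1\pmod{M}$; this automatically satisfies $\chi^n=\alpha^{-nk}=\alpha^{-1}$ and $\chi^c=(\alpha^c)^{-k}=\alpha^k=\chi^{-1}$, inherits unramifiedness from $\alpha$ (in particular above $l$), and may be multiplied by an auxiliary finite-order Hecke character---whose existence is guaranteed by class field theory---to enforce the prescribed local behavior at $\cL$ and at $v_q$. In the general case $\gcd(n,M)>1$, finding $\chi$ reduces to extracting an $n$-th root of $\alpha^{-1}$ inside the $c$-antiinvariant subgroup of $\Hom(\Gal(\Fbar/F),\mu_\infty)$; the hypothesis $\Q_l\supset\Q(\mu_{Mn})$ (equivalently $\mu_{Mn}\subset\F_l^\times$) ensures that $n$-th roots exist locally at every finite place, and the hypothesis $F\supset\mu_n$ supplies via Kummer theory on $F$ a copious family of order-$n$ characters of $\Gal(\Fbar/F)$ with which to overcome the residual global gluing obstruction, to adjust any such root into the $c$-antiinvariant subgroup, and to enforce the local constraints at $\cL$ and $v_q$.
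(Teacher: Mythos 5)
Your overall skeleton (twist so that condition (7) holds, apply Theorem \ref{orig-main-theorem}, untwist) is the same as the paper's, but there is a genuine gap at the crucial step: you claim to construct the $n$-th root character $\chi$ with $\chi^n=\alpha^{-1}$, $\chi^c=\chi^{-1}$ \emph{over $F$ itself}. When $\gcd(n,M)>1$ this is not possible in general. The obstruction to extracting an $n$-th root of a finite-order character of $G_F$ (with $\mu_n\subset F$) is an $n$-torsion class in $\Br(F)$ (this is exactly Lemma \ref{cft-lemma2} of the paper), and it is genuinely nonzero in examples: already for $n=2$ the character cutting out $\Q(i)/\Q$ has no square root (no cyclic quartic field contains $\Q(i)$), and locally the ramified quadratic character of $\Q_p$ with $p\equiv 3\ \mathrm{mod}\ 4$ has no square root even though $\mu_2$ lies in the base field. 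Your two proposed remedies do not address this: the hypothesis $\Q_l\supset\Q(\mu_{Mn})$ only guarantees that the values of the root character lie in $\Q_l^\times$ (so the twist keeps $\Z_l$-coefficients, which is why the corollary needs it); it says nothing about solvability of the local or global embedding problems, which depend on the structure of $G_{F_v}^{\mathrm{ab}}$ and not on the coefficient field. And multiplying by order-$n$ Kummer characters cannot repair matters while preserving the exact equation $\chi^n=\alpha^{-1}$: if you replace $\chi$ by $\chi\theta$ you change $\chi^n$ by $\theta^n$, and the connecting-map computation $\delta(\alpha\theta^{-n})=\delta(\alpha)$ shows the obstruction class of the root-extraction problem is unchanged by any such adjustment.

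The paper's proof accepts that the obstruction cannot be killed over $F$ and instead passes to an auxiliary CM extension $F''$ of $F$ (Lemma \ref{cft-lemma3}): one first writes $\alpha=\phi/\phi^c$ using Lemma \ref{cft-lemma1} (a cleaner device than your ``adjust into the $c$-antiinvariant subgroup''), then chooses $F''$, linearly disjoint from $\Fbar^{\ker\bar r}(\zeta_l)$, CM, Galois over $F_0$ if required, whose local degrees kill the local invariants of the Brauer obstruction, so that $\phi|_{G_{F''}}$ has an $n$-th root $\kappa$; setting $\psi=\kappa/\kappa^c$ gives the antiinvariant root with the needed unramifiedness. One then twists $r|_{G_{F''}}$ by $\psi^{-1}$, applies Theorem \ref{main-theorem} over $F''$, and untwists. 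This intermediate base change is harmless precisely because the conclusion of the theorem is only ``potentially automorphic'' and permits a further extension; that structural move is what your argument is missing. (Your coprime case $\gcd(n,M)=1$ with $\chi=\alpha^{-k}$ is fine, and in fact there the auxiliary character you invoke at $\cL$ and $v_q$ is unnecessary, since $\alpha$ is already unramified there and $\alpha(\Frob_{v_q})=1$; but the coprime case does not cover the statement.)
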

This means that if one has a compatible system of representations $r_l$ and wishes to apply Theorem \ref{orig-main-theorem} to \emph{some} representation in the family, one can usually do so without concern for condition (7). In particular, one notes that the characters $\det r_l$ form a compatible system, so the characters $(\det r_l)\eps_l^{n(n-1)/2}$ form a compatible system of finite-order characters; and in particular, they all have the same order $M$. Thus we can choose $l$ to be a rational prime which splits in $\Q(\mu_{Mn})$ and $F^*(n,N)$ and splits in any further fields which are convenient for the particular application one has in mind, and then apply the corollary.

We will need three facts from class field theory. The author is grateful to Brian Conrad for explaining a quick proof of the first of these facts.
\begin{lemma}\label{cft-lemma1}
Suppose $K$ is a CM field, and $\phi:G_K\rightarrow \overline{\mathbb{Q}}^{\times}$ is a finite order Galois character satisfying $\phi\phi^c=1$. Then we can write $\phi=\psi^c/\psi$ for some finite order character $\psi:G_K\rightarrow \overline{\mathbb{Q}}^{\times}$.
\end{lemma}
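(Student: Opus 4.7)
The plan is to use global class field theory to translate the statement into an instance of Hilbert 90 for the cyclic extension $K/K^+$. Write $G = \Gal(K/K^+) = \langle c\rangle$, and by CFT view $\phi$ as a finite-order continuous character of the idele class group $C_K = \mathbb{A}_K^\times/K^\times$. Under this translation, $\phi\phi^c = 1$ says that $\phi$ is a $G$-cocycle and the conclusion says it is a coboundary, so the task is to exhibit a finite-order $\psi$ whose coboundary is $\phi$.

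The crux is the identity $\phi|_{C_{K^+}} = 1$. Since $c$ acts trivially on $C_{K^+} = C_K^c$ (the equality coming from Hilbert 90, $H^1(G,K^\times)=0$), the hypothesis $\phi\phi^c = 1$ alone only forces $\phi^2|_{C_{K^+}} = 1$, so more is needed. I would deduce triviality from the general identity
\[
\det\bigl(\Ind_{G_K}^{G_{K^+}}\phi\bigr) \;=\; \chi_{K/K^+} \cdot (\phi\circ V),
\]
where $V$ is the transfer and $\chi_{K/K^+}$ is the quadratic character of $K/K^+$. The restriction of $\det\Ind\phi$ to $G_K$ is $\phi\phi^c = 1$, so $\det\Ind\phi$ factors through $\Gal(K/K^+) \cong \mathbb{Z}/2$ and must equal either $1$ or $\chi_{K/K^+}$. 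Evaluating at a complex conjugation $c_v \in G_{K^+}$ coming from a real place of the totally real field $K^+$ (this is where the CM hypothesis enters essentially, ensuring $c_v^2 = 1$ in $G_{K^+}$), the induction matrix gives $\det\Ind\phi(c_v) = -\phi(c_v^2) = -1 = \chi_{K/K^+}(c_v)$; hence $\det\Ind\phi = \chi_{K/K^+}$ and so $\phi\circ V = 1$, which by CFT is exactly $\phi|_{C_{K^+}} = 1$.

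Granted this, $\phi$ kills $\ker(c-1\colon C_K\to C_K) = C_K^c$, so the prescription $\bar\psi((c-1)x) := \phi(x)$ unambiguously defines a continuous finite-order character $\bar\psi$ on the closed subgroup $(c-1)C_K \subset C_K$; note $(c-1)C_K \subset C_K^1$ since $c$ acts trivially on the $\mathbb{R}_{>0}$-component of the idelic norm. Since $\bar\psi$ has image in the discrete group $\mu_M$, it is trivial on the connected identity component of $(c-1)C_K$, and so factors through a closed subgroup of the profinite group $C_K^1/D_K \cong G_K^{\mathrm{ab}}$. Any continuous $\mathbb{Q}/\mathbb{Z}$-valued character of a closed subgroup of a profinite abelian group factors through a finite quotient of the ambient group, and then extends by injectivity of $\mathbb{Q}/\mathbb{Z}$ on finite abelian groups; this produces a continuous finite-order $\psi\colon C_K \to \mu_\infty$ with $\psi|_{(c-1)C_K} = \bar\psi$, and by construction $\psi^c/\psi = \phi$.

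The main obstacle is the first step, the identity $\phi|_{C_{K^+}} = 1$; it is here that the CM structure of $K$ is genuinely needed, through the involution $c_v^2 = 1$ furnished by a real place of $K^+$. The subsequent passage from $\bar\psi$ on $(c-1)C_K$ to a finite-order $\psi$ on $C_K$ is formal character theory for profinite abelian groups.
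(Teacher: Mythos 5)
Your opening step is correct: the identity $\det(\Ind_{G_K}^{G_{K^+}}\phi)=\chi_{K/K^+}\cdot(\phi\circ V)$, the evaluation $\det\Ind\phi(c_v)=-\phi(c_v^2)=-1$ at a complex conjugation, and the translation through the compatibility of the transfer with $C_{K^+}\hookrightarrow C_K$ do prove $\phi|_{C_{K^+}}=1$. The gap is in your final paragraph, and it is not a formality --- it is where the whole content of the lemma sits. A finite-order character of a closed subgroup of a compact abelian group need not extend to a finite-order character of the ambient group (the nontrivial character of $\mu_2\subset S^1$ extends only to the characters $z\mapsto z^n$ with $n$ odd, all of infinite order), so your extension of $\bar\psi$ really must pass through the profinite quotient $C_K/D_K\cong G_K^{\mathrm{ab}}$, $D_K$ the connected component. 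But for $\bar\psi$ to factor through the image of $(c-1)C_K$ there, you need $\bar\psi$ trivial on $(c-1)C_K\cap D_K$, i.e.\ you need $\phi(x)=1$ whenever $(c-1)x\in D_K$. Triviality on the connected component of the subgroup $(c-1)C_K$ does not give this: the intersection of a closed subgroup with the connected component of the ambient group can be disconnected (again $\mu_2\subset S^1$).

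Worse, the missing vanishing is strictly stronger than the claim you proved. Since $K/K^+$ is ramified at every archimedean place, the generator of $C_{K^+}/N_{K/K^+}C_K$ may be taken to be $-1$ at a single real place of $K^+$, and its image in $C_K$ lies in the image of $\mathbb{C}^\times$, hence in $D_K$; so modulo $D_K$ the image of $C_{K^+}$ has the same closure as the image of $N_{K/K^+}C_K=\{x\cdot c(x)\}$, on which $\phi$ is trivial by the hypothesis $\phi\phi^c=1$ alone. In other words, at the level of $G_K^{\mathrm{ab}}=C_K/D_K$, where the extension problem lives, your key claim carries no information beyond the hypothesis. What the construction actually requires is that $\phi$ kill the full group of $c$-invariants $(G_K^{\mathrm{ab}})^c$ (equivalently $\{x:(c-1)x\in D_K\}$); since the admissible $\phi$ are exactly the finite-order characters of $G_K^{\mathrm{ab}}$ trivial on $(1+c)G_K^{\mathrm{ab}}$, this amounts to the statement $(G_K^{\mathrm{ab}})^{c}=(1+c)G_K^{\mathrm{ab}}$, which (using $H^1(\Gal(K/K^+),C_K)=0$) comes down to a vanishing statement for the cohomology of the connected component $D_K$ in the style of Artin--Tate. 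That is exactly the nontrivial input the paper supplies by a different route --- Tate's theorem $H^2(G_F,\Q/\Z)=0$ fed into the Hochschild--Serre spectral sequence for $K/K^+$, with total imaginarity of $K$ handling the degree-$3$ terms --- and your proposal needs an argument of that strength at precisely the point you describe as formal character theory.
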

\begin{proof}
Let $K^+$ denote the maximal totally real subfield of $K$, and $D$ denote the group $\Q/\Z$ (upon which we will place a trivial $G_{K^+}$ action). We want $H^1(\Gal(K/K^+),H^1(G_K,D))=0$. We have a spectral sequence $E^{i,j}_2=H^i(\Gal(K/K^+),H^j(G_K,D))\Rightarrow H^{i+j}(G_{K^+},D)$. By Tate's theorem, $H^2(G_F,D)=0$ for any global field $F$, so $E^{0,2}_2=0$. Also, $E^{2,0}_2=H^2(\Gal(K/K^+),D)$, which vanishes by double periodicity of Tate cohomology for cyclic groups and the divisibility of $D$. We see the abutment in degree 2 vanishes, so $E^{1,1}_2$ must vanish provided $E^{0,3}_2$=0. But for any number field $K$, $H^3(G_K,.)=\prod_{v|\infty} (G_{K_v},.)$, so $H^3(G_K,D)=\prod_{v|\infty} (G_{K_v},D)$ which vanishes in our case since $K$ is totally complex.
\end{proof}

\begin{lemma}\label{cft-lemma2}
Suppose that $n$ is a positive integer, that $K$ is a number field containing $\mu_n$, and that $\phi:G_K\rightarrow\bar{\Q}$ is a finite order character of $G_K$. Then the obstruction to taking an $n$-th root of $\phi$ can be identified with an $n$-torsion element in the Brauer group of $K$.
\end{lemma}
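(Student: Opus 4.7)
The plan is to run a standard Kummer-theoretic argument in continuous Galois cohomology. First, since $\phi$ has finite order, its image lies inside $\mu_M\subset \overline{\Q}^\times$ for some positive integer $M$, so we may regard $\phi$ as an element of
$H^1(G_K,\mu_M)=\mathrm{Hom}_{\mathrm{cts}}(G_K,\mu_M)$, where we give $\mu_M$ the trivial Galois action (if $M$ is not divisible by enough roots of unity this requires $K\supset\mu_M$; if not, keep the natural Galois action on $\mu_M$, which does not affect what follows).

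Next, I would take the short exact sequence of finite discrete $G_K$-modules
$$1\to \mu_n \to \mu_{nM} \xrightarrow{x\mapsto x^n} \mu_M \to 1$$
and extract the corresponding long exact sequence in continuous cohomology. In particular one obtains a connecting map
$$\delta : H^1(G_K,\mu_M) \to H^2(G_K,\mu_n),$$
and the portion of the long exact sequence
$$H^1(G_K,\mu_{nM}) \xrightarrow{n} H^1(G_K,\mu_M) \xrightarrow{\delta} H^2(G_K,\mu_n)$$
shows that $\phi$ lies in the image of the $n$-th power map---i.e.~admits a lift to a character $\psi$ with $\psi^n=\phi$---if and only if $\delta(\phi)=0$. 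Any such $\psi$ is automatically of finite order, since $\psi^{nM}=\phi^M=1$. So $\delta(\phi)\in H^2(G_K,\mu_n)$ is the obstruction.

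The final step is to identify $H^2(G_K,\mu_n)$ with $\mathrm{Br}(K)[n]$. For this I would invoke the Kummer sequence
$$1\to \mu_n \to \bar{K}^\times \xrightarrow{n} \bar{K}^\times \to 1,$$
whose long exact sequence, combined with Hilbert 90 ($H^1(G_K,\bar{K}^\times)=0$), produces the canonical isomorphism $H^2(G_K,\mu_n)\cong \mathrm{Br}(K)[n]$. Under this identification, $\delta(\phi)$ becomes the desired $n$-torsion class in the Brauer group.

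There is no serious obstacle here; the argument is entirely formal once one sets up the two exact sequences correctly. The only mild point of care is to check that all constructions are compatible with continuity, which is automatic since every module that appears is finite and discrete. The hypothesis $\mu_n\subset K$ ensures that the Galois action on the coefficient module $\mu_n$ is trivial and so that $\mathrm{Br}(K)[n]$ can itself be described concretely by cyclic algebras, but it plays no essential role in producing the obstruction class.
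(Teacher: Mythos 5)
Your argument is correct, and it is at bottom the same Kummer-theoretic computation as the paper's, though the two routes differ in one respect worth noting. The paper works with the divisible trivial module $\Q/\Z$: from $0\to\tfrac{1}{n}\Z/\Z\to\Q/\Z\xrightarrow{\times n}\Q/\Z\to 0$ it identifies the obstruction with the class of $\phi$ in the cokernel of the $n$-th power map on $\Hom(G_K,\Q/\Z)$, and then invokes Tate's theorem $H^2(G_K,\Q/\Z)=0$ to identify that cokernel with $H^2(G_K,\tfrac{1}{n}\Z/\Z)\cong H^2(G_K,\mu_n)\cong\Br(K)[n]$. Your version, using the finite modules $\mu_M\subset\mu_{nM}$, produces the obstruction directly as $\delta(\phi)$ under the connecting map, so no vanishing theorem is needed; what the paper's route buys in exchange is the extra fact that \emph{every} $n$-torsion Brauer class arises as an obstruction, which is not needed for the application. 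Two small corrections to your write-up: the parenthetical ``keep the natural Galois action on $\mu_M$'' does not work, since if $\mu_M\not\subset K$ the character $\phi$ is a homomorphism rather than a cocycle for the nontrivial action; the right move is simply to give the abstract cyclic groups $\mu_M$ and $\mu_{nM}$ the trivial action throughout, which requires no hypothesis on $K$. Relatedly, the hypothesis $\mu_n\subset K$ is more essential than your closing sentence suggests: it is precisely what identifies the trivial module $\Z/n\Z$ with the Kummer module $\mu_n\subset\Kbar^\times$, and without it the connecting map would land in $H^2(G_K,\Z/n\Z)$, which need not coincide with $H^2(G_K,\mu_n)\cong\Br(K)[n]$.
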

\begin{proof} We have a short exact sequence of abelian groups $0\to(\frac{1}{n}\Z)/\Z\to \Q/\Z \overset{\times n}\to \Q/\Z\to 0$, and we may place a trivial $G_K$ action on them and then take the long exact sequence in cohomology, part of which reads
$ H^1(\Q/\Z ,G_K)\overset{\times n}\to H^1(\Q/\Z ,G_K) \to H^2(((1/n)\Z)/\Z, G_K) \to H^2(\Q)/\Z, G_K)$,
and so
$ \Hom(\Q/\Z ,G_K)\overset{\times n}\to \Hom(\Q/\Z ,G_K) \to H^2(((1/n)\Z)/\Z, G_K) \to 0$,
using Tate's result that $H^2(\Q)/\Z, G_K)=0$ for $K$ a number field. But the first two groups in the sequence both isomorphic to the group of finite order characters of $G_K$, with the map between them being the $n$-th power map. Thus the obstruction to finding an $n$th root is the cokernel of this map, which from the exact sequence is $H^2(((1/n)\Z)/\Z, G_K)$, which is $\cong H^2(\mu_n, G_K)$ (since $\mu_n\subset K$, so $((1/n)\Z)/\Z\isoto \mu_n$ as groups with a Galois action), Then we finally have that $H^2(\mu_n, G_K)\isoto \Br(K)[n]$, and we are done.\footnote{To see this, we take the long exact sequence in cohomology associated to $1\to \mu_n \to \overline{K}^\times \overset{x\mapsto x^n}\to \overline{K}^\times \to 1$, and use Hilbert's Theorem 90 and the fact that $\Br(K)=H^2(\overline{K}^\times,G_K)$}
\end{proof}

\begin{lemma}\label{cft-lemma3}
Suppose $n$ is a positive integer, $F$ is a CM field containing $\mu_n$, $l$ is a rational prime, $F^{\mathrm{avoid}}$ is an extension of $F$, and $\chi: G_F to \bar{\Q}_l^\times$ is a Galois character, which is finite order,  unramified at $l$, and satisfies $\chi \chi^c = 1$.  Then we can find a CM extension $F''$ of $F$, linearly disjoint from $F^{\mathrm{avoid}}$, and a finite order character $\psi: G_{F''} \to \bar{\Q}_l^\times$ with:
\begin{itemize}
\item $\psi$ unramified at $l$
\item $\psi \psi^c = 1$, and
\item $\psi^n = \chi|_{G_{F'}}$
\end{itemize}
Moreover, if we are given a set of primes at which $\chi$ is unramified, we can arrange that $\psi$ is again unramified at those primes.  Finally, if F is Galois over some smaller field $F_0$, we can arrange that $F''$ is too.
\end{lemma}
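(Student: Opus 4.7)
The plan is to reduce to taking an $n$-th root of an auxiliary character via Lemmas \ref{cft-lemma1} and \ref{cft-lemma2}, and then to correct for any residual ramification at $l$ by a suitable twist, while controlling the field extension at each stage. First, I apply Lemma \ref{cft-lemma1} to $\chi$ to write $\chi = \eta^c/\eta$ for some finite-order character $\eta \colon G_F \to \Qbar^\times$. The key observation is that if I can find, over some finite CM extension $F''/F$, a finite-order character $\eta' \colon G_{F''} \to \Qbar^\times$ with $(\eta')^n = \eta|_{G_{F''}}$, then $\psi := (\eta')^c/\eta'$ automatically has finite order and satisfies both $\psi^n = \chi|_{G_{F''}}$ and $\psi \psi^c = 1$.

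By Lemma \ref{cft-lemma2}, the obstruction to extracting such an $n$-th root lies in $\Br(F)[n]$, which by Albert--Brauer--Hasse--Noether is determined by finitely many local components. These can be killed by passing to a CM extension $F''/F$ with suitably chosen local degrees at the offending places; such an $F''$ can be constructed explicitly, for instance of the form $F(\sqrt[n]{\alpha})$ where $\alpha \in F^+$ is a carefully chosen totally positive element (which automatically yields a CM extension). To guarantee linear disjointness from $F^{\mathrm{avoid}}$ I additionally impose splitting conditions on $F''$ at auxiliary primes outside the support of the Brauer class, which is harmless since that support is finite. If $F/F_0$ is Galois and we want $F''/F_0$ Galois, we pass to the Galois closure over $F_0$; this preserves CM-ness because $F/F_0$ is Galois, and preserves the prescribed local data because those data are $\Gal(F/F_0)$-equivariant.

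The most delicate part is arranging that $\psi$ is unramified at $l$ and at the other specified primes. Since $\chi$ is unramified at $l$ and $\chi = \eta^c/\eta$, we have $\eta^c|_{I_v} = \eta|_{I_v}$ for each $v \mid l$, and hence $\psi^n|_{I_v} = 1$, so $\psi|_{I_v}$ takes values in $\mu_n$. To kill this residual $n$-torsion ramification I replace $\eta'$ by $\eta' \cdot \mu$ for a suitable $n$-torsion character $\mu$ of $G_{F''}$; the new $\psi$ becomes $\psi \cdot (\mu^c/\mu)$ and is unramified at $v$ exactly when $(\mu^c/\mu)|_{I_v} = \psi^{-1}|_{I_v}$. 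The existence of such an $n$-torsion character with prescribed inertial data at finitely many primes is a standard class-field-theoretic problem, solvable (possibly after a further controlled CM enlargement of $F''$ chosen disjoint from $F^{\mathrm{avoid}}$ as above) by the same Brauer-obstruction argument. The main obstacle I anticipate is careful bookkeeping to ensure that all these field enlargements compose consistently while preserving the CM, disjointness, and (optional) Galois conditions throughout; but this reduces to routine global class field theory since only finitely many places are constrained at each step and we retain total freedom elsewhere.
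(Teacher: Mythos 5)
Your argument is essentially the paper's own: write $\chi=\eta^c/\eta$ via Lemma \ref{cft-lemma1}, kill the Brauer obstruction of Lemma \ref{cft-lemma2} by passing to a CM extension with suitable local degrees at the finitely many places supporting the class, and take $\psi=(\eta')^c/\eta'$ for the resulting $n$-th root $\eta'$; in fact you go beyond the paper's sketch by explicitly treating the unramifiedness of $\psi$ at $l$ and at the prescribed primes, a point the paper's proof passes over in silence. Two caveats. First, the parenthetical claim that $F(\sqrt[n]{\alpha})$ with $\alpha\in F^+$ totally positive is ``automatically CM'' is false for $n>2$ (already $\Q(\sqrt[3]{2})$ is not totally real, so such a field need not be a totally imaginary quadratic extension of a totally real field); the correct device, and the one the paper uses, is to take a totally real extension of $F^+$ with sufficiently divisible local degrees at the offending places and composite it with $F$, which stays CM, can be kept linearly disjoint from $F^{\mathrm{avoid}}$ by auxiliary local conditions, and can be made Galois over $F_0$. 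Second, the existence of your correcting $n$-torsion character $\mu$ with $(\mu^c/\mu)|_{I_v}=\psi^{-1}|_{I_v}$ is the right idea but is not quite ``standard'': locally, if $v=v^c$ the equation on $I_v$ can have a residual quadratic obstruction (one only knows $\psi$ kills norms, hence is $2$-torsion, not trivial, on the fixed local units), which you can sidestep by first arranging that the relevant places split in $F''$ over its totally real subfield (e.g.\ by forcing even local degrees there); and globally, realizing prescribed $n$-torsion inertial data by an $n$-torsion character runs into Grunwald--Wang-type extension issues. Your allowance of a further controlled CM enlargement gives enough room to handle both, but this is precisely the bookkeeping that would need to be written out to upgrade the sketch to a proof.
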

\begin{proof}[Proof sketch]

Next note $\chi$ can be written as $\phi/\phi^c$ for some character $\phi$, by Lemma \ref{cft-lemma1}. Our next goal is to find some CM extension $F'$ of $F$, linearly disjoint from $\Fbar^{\ker \bar{r}}$ over $F$, over which $\phi$ has an $n$th root. By Lemma \ref{cft-lemma2}, since $n$ is prime and so we certainly don't have $8|n$, the obstruction to $\phi$ having an $n$th root can be identified with an $n$-torsion element in $\alpha\in\Br(K)$; writing $\alpha_v$ for the image of $\alpha$ in $\Br(K_v)$ for each place $v$ of $K$, then since $\Br(K)\subset\bigoplus_v \Br(K_v)$, $\alpha_v$ is 0 for almost all $v$, and we see we can kill $\alpha$ by making any global field extension which induces at each place where $\alpha_v$ is nontrivial a local extension whose degree is divisible by the order of $\alpha_v$. We can make these local extensions in a way that keeps us linearly disjoint from any extension we like, and also keeps the field we work with CM, and finally is done in a way which keeps the extension Galois over $F_0$. (We make an extension to the totally real subfield which will give large enough local extensions everywhere we need them.) 

Then, over the extension for which $\phi$ has an $n$th root ($\kappa$, say), we take $\psi=\kappa/\kappa^c$.
\end{proof}

\begin{proof}[Sketch proof of Corollary \ref{cor:twist}]
Suppose that $n$, $N$, $C$, $F^*$, $F$, $l$ and $r$ are as in the statement of the corollary. Let $\chi$ be $(\det \rbar)\bar{\eps}_l^{-(1-n)n/2}$, and let $\tilde{\chi}$ be the Teichmuller lift of $\chi$. By hypothesis, $r$ satisfies conditions (1--6) and (8--11) of Theorem \ref{main-theorem}, and taking determinants of condition (2) we see that $\chi \chi^c = 1$. Using Lemma \ref{cft-lemma3} above we pass to an extension field $F''$ linearly disjoint from  $\Fbar^{\ker \bar{r}}(\zeta_l)$ over $F$ where we can find some finite order character $\psi: G_{F''} \to \bar{\Q}_l^\times$ with  $\psi$ unramified at $l$, $\psi \psi^c = 1$, $\psi^n = \chi|_{G_{F''}}$ and $\psi$ unramified at primes of $\cL$. Moreover since $\chi$ has order $M$, $\psi$ has order at most $nM$, and hence we see that $\chi$ can be taken to have values in $\Q_l^\times$, since $l$ splits in $\Q(\mu_{Mn})$. 

We claim that $(\psi^{-1} \otimes r)$ satisfies all the conditions (1-11) of Theorem \ref{main-theorem} (including condition (7)). Conditions (1), (4), (5), and (6) are trivial. Condition (2) is immediate given the fact that $\psi\psi^c=1$. Condition (3) is immediate since the Bella\"iche-Chenevier sign is unaffected by twisting. For condition (7), we see that $\det (\rbar \otimes \psi^{-1}) = (\det \rbar) (\psi^{-n })= (\det \rbar)\chi^{-1} = \bar{\eps}_l^{(1-n)n/2}$. For condition (8) we use the fact that if the image of a representation is `big', then the same is true for any twist (this is a consequence of \cite[Proposition 2.2]{snowden-wiles}) and the fact that $F''$ is linearly disjoint from $\Fbar^{\ker \bar{r}}(\zeta_l)$ over $F$. For condition (9) we use the fact that $F''$ is linearly disjoint from $\Fbar^{\ker \bar{r}}(\zeta_l)$ over $F$ and the fact that $\psi$ is unramified at $l$. For condition (10) we use again the fact that $\psi$ is unramified at $l$, and for condition (11) we use the fact that the polarization is unaffected by twisting.

Thus, applying the original theorem \ref{main-theorem}, we can find a further extension $F'$ of $F''$, still linearly disjoint from $\Fbar^{\ker \bar{r}}$, such that we get that $\psi^{-1} \otimes r$ is automorphic over $F'$. Then we're done, since a twist of an automorphic representation is automorphic. (In the case where we are given a field $F_0$ such that $F''$ must be Galois over $F_0$, we can arrange this by ensuring that $F''$ is Galois over $F_0$ by using the last sentence of Lemma \ref{cft-lemma3}. 
\end{proof}


\begin{thebibliography}{99}
\bibitem{blggt} T.~Barnet-Lamb, T.~Gee, D.~Geraghty, R.~Taylor, \emph{Potential automorphy and change of weight}, in preparation.
\bibitem{bc} J.~Bellaiche and G.~Chenevier, \emph{The sign of Galois representations attached to automorphic forms for unitary groups}, preprint available; to appear in \emph{Stabilisation de la formule des traces, varietes de Shimura et applications arithmetiques}, in preparation.
\bibitem{cht} L.~Clozel, M.~Harris and R.~Taylor, \emph{Automorphy for some l-adic lifts of automorphic mod l representations}, to appear in proc. IHES.
\bibitem{dmos} P.~Deligne, J.~S.~Milne, A.~Ogus, K.-Y.~Shih, \emph{Hodge cycles, motives and Shimura varieties}, LNM 900, Springer 1982.
\bibitem{gs} I.~M.~Gessel and R.~P.~Stanley, `Algebraic enumeration', in R.~L.Graham, M.~Gr\"otschel and L.~Lov\'asz, eds., \emph{Handbook of combinatorics}, Vol. 2. Elsevier, (1995)
\bibitem{ghk} R.~Guralnick, M.~Harris and N.~M.~Katz, \emph{Automorphic Realization of residual Galois representations}, preprint.
\bibitem{parisbook} M.~Harris, ed. \emph{Stabilization of the trace formula, Shimura varieties, and arithmetic applications.}, to appear.
\bibitem{hsbt} M.~Harris, N.~Shepherd-Barron and R.~Taylor, \emph{A family of Calabi-Yau varieties and potential automorphy}, to appear in Ann. Math.
\bibitem{k} N.~Katz, \emph{Another look at the Dwork family}, to appear in Manin Festschrift.
\bibitem{k-book} N.~Katz, \emph{Exponential sums and differential equations}, Annals of Math. Study 125, Princeton Univ. Press, 1990.
\bibitem{mvw} C.~R.~Matthews, L.~N.~Vaserstein, and B.~Weisfeiler, \emph{Congruence properties of Zariski dense subgroups I}, Proc. Lon. Math. Soc. 48 (1984), 514--532.
\bibitem{nori} M.~Nori, \emph{On subgroups of $GL_n(\F_p)$}, Invent. Math. 88 (1987), 257--275.
\bibitem{t-potmod} R.~Taylor, \emph{Remarks on a conjecture of Fontaine and Mazur}, Journal of the Institute of Mathematics of Jussieu 1 (2002), 1--19.
\bibitem{t-mod} R.~Taylor, \emph{Automorphy for some l-adic lifts of automorphic mod l representations, II}, to appear in proc. IHES.
\bibitem{serre-alr} J.-P.~Serre, \emph{Abelian $l$-adic representations and elliptic curves}, W. A.  Benjamin, Inc., New York-Amsterdam 1968 xvi+177 pp. 
\bibitem{shin} S.~W.~Shin, \emph{Galois representations arising from some compact Shimura varieties}, to appear in Ann. Math.
\bibitem{snowden-wiles} A.~Snowden, A.~Wiles, \emph{Bigness in compatible systems}, preprint
\bibitem{we-js} A.~Weil, \emph{Jacobi sums as ``Gr\"ossencharaktere''}, Trans. Amer. Math. Soc. 
73, (1952). 487-495. 

\end{thebibliography}
\end{document}